\documentclass[11pt]{amsart}
\usepackage{amssymb}
\usepackage{amsfonts}
\usepackage{amsmath}
\usepackage{graphicx}
\usepackage{xcolor}
\usepackage{mathrsfs}
\usepackage{stmaryrd}
\usepackage{epsfig,color}
\usepackage{blindtext}
\usepackage{enumerate}
\usepackage{hyperref}
\usepackage{url}
\usepackage{bbm}
\usepackage{filecontents}
\usepackage{nicefrac,mathtools}
\usepackage{bm}  
\usepackage[nocompress]{cite}
\DeclareGraphicsExtensions{.pdf,.jpeg,.png}
\usepackage{epstopdf}
\usepackage{cancel} 
\usepackage[normalem]{ulem} 
\usepackage{verbatim} 
\usepackage{enumitem} 
\usepackage{tikz-cd}
\usetikzlibrary{cd}
\usepackage{color}
\usepackage[msc-links, lite]{amsrefs}
\usepackage{geometry}
\usepackage{tikz}
\usetikzlibrary{decorations.markings}
\usetikzlibrary{arrows.meta}

\usepackage{extarrows}
\newtheorem{theorem}{Theorem}[section]

\newtheorem{proposition}[theorem]{Proposition}
\newtheorem{lemma}[theorem]{Lemma}
\newtheorem{corollary}[theorem]{Corollary}

\theoremstyle{definition}
\newtheorem{definition}[theorem]{Definition}
\newtheorem{example}[theorem]{Example}

\numberwithin{equation}{section}

\numberwithin{equation}{section}

\makeatletter
\let\save@mathaccent\mathaccent
\newcommand*\if@single[3]{%
\setbox0\hbox{${\mathaccent"0362{#1}}^H$}%
\setbox2\hbox{${\mathaccent"0362{\kern0pt#1}}^H$}%
\ifdim\ht0=\ht2 #3\else #2\fi
}

\makeatother

\makeatletter
\newcommand*{\transpose}{%
{\mathpalette\@transpose{}}%
}
\newcommand*{\@transpose}[2]{%
\raisebox{\depth}{$\m@th#1\intercal$}%
}
\makeatother
\usetikzlibrary{hobby}

 \usetikzlibrary{decorations}
\makeatletter
\def\pgfutil@Repeat#1#2{#2\ifnum#1>0
  \expandafter\pgfutil@firstofone\else\expandafter\pgfutil@gobble\fi
  {\expandafter\pgfutil@Repeat\expandafter{\the\numexpr#1-1\relax}{#2}}}
\tikzset{
  dash between/.code args={#1 and #2}{%
    \tikz@addoption{%
      \pgfgetpath\currentpath
      \pgfprocessround{\currentpath}{\currentpath}%
      \pgf@decorate@parsesoftpath{\currentpath}{\currentpath}%
      \pgfmathsetlengthmacro\firstpart{(#1)*\pgf@decorate@totalpathlength}%
      \pgfmathsetlengthmacro\secondpart{(#2-(#1))*\pgf@decorate@totalpathlength}%
      \pgfmathsetlengthmacro\thirdpart{(1-(#2))*\pgf@decorate@totalpathlength}%
      \edef\thirdpart{{\thirdpart}{0pt}}%
      \edef\firstpart{{\firstpart}{0pt}}%
      \pgfmathsetlengthmacro\secondpartlength{\pgfkeysvalueof{/tikz/dash between on}
                                            +(\pgfkeysvalueof{/tikz/dash between off})}%
      \pgfmathtruncatemacro\repetitions{\secondpart/\secondpartlength}%
      \pgfmathsetlengthmacro\secondexpand{\secondpart/\repetitions-\secondpartlength}%
      \edef\secondexpand{\the\dimexpr\pgfkeysvalueof{/tikz/dash between off}+\secondexpand\relax}%
      \edef\secondpart{%
        \pgfutil@Repeat{\the\numexpr\repetitions-1\relax}%
          {{\pgfkeysvalueof{/tikz/dash between on}}{\secondexpand}}%
      }%
      \edef\tikz@temp{\firstpart\secondpart\thirdpart}%
      \expandafter\pgfsetdash\expandafter{\tikz@temp}{+0pt}%
    }
  }
}
\makeatother
\tikzset{
  dash between style/.is choice,
  dash between style/dotted/.style        ={dash between on=\pgflinewidth,dash between off=2pt},
  dash between style/densely dotted/.style={dash between on=\pgflinewidth,dash between off=1pt},
  dash between style/loosely dotted/.style={dash between on=\pgflinewidth,dash between off=4pt},
  dash between style/dashed/.style        ={dash between on=3pt,dash between off=2pt},
  dash between style/loosely dashed/.style={dash between on=3pt,dash between off=6pt},
  dash between style/densely dashed/.style={dash between on=3pt,dash between off=2pt},
  dash between style/no/.style={dash between on=0pt, dash between off=1pt},
  dash between on/.initial=\pgflinewidth,
  dash between off/.initial=2pt,
  middle dotted line/.style={
    thick,
    dash between=.35 and .65}}

\newcommand\QQ{\mathbb{Q}}
\newcommand\CC{\mathbb{C}}

\newcommand\ZZ{\mathbb{Z}}

\newcommand\OO{\mathcal{O}}

\DeclareMathOperator{\pro}{pro}

\DeclareMathOperator{\homo}{Hom}
\DeclareMathOperator{\Tor}{Tor}
\let\Im\relax
\DeclareMathOperator{\Im}{Im}
\DeclareMathOperator{\Ker}{Ker}

\let\Im\relax
\DeclareMathOperator{\Im}{Im}

\DeclareMathOperator{\Spec}{Spec}
\DeclareMathOperator{\Lie}{Lie}
\DeclareMathOperator{\GL}{GL}
\DeclareMathOperator{\End}{End}
\DeclareMathOperator{\cont}{cont}

\usepackage[all]{xy}

\hypersetup{pdfborder=0 0 0}

\title{Continuous Mal'cev $\QQ_p$-Completion of Pro-$p$ Groups}
\author{Runjie Hu, Guozhen Wang}
\newcommand{\Addresses}{{
  \bigskip
  \footnotesize

  Runjie Hu, \textsc{Department of Mathematics, Texas A\&M University,
    College Sta, TX 77843, US}\par\nopagebreak
  \textit{E-mail address}, \texttt{ runjie.hu@tamu.edu}

  Guozhen Wang, \textsc{Fudan University Jiangwan Campus,
2005 Songhu Road, Shanghai, China}\par\nopagebreak
  \textit{E-mail address}, \texttt{ wangguozhen@fudan.edu.cn}
}}
\date{}
\begin{document}

\maketitle

\begin{abstract}
We give three explicit constructions of the continuous Mal'cev $\QQ_p$-completion of a topologically finitely generated pro-$p$ group, using Tannakian formalism, Hopf algebras and $p$-adic analytic groups. We study properties of the continuous Mal'cev $\QQ_p$-completion via these explicit constructions.
\end{abstract}

\section*{Introduction}

Mal'cev completion is a fundamental method that replaces a discrete group by a rational unipotent algebraic group and its nilpotent Lie algebra. The origin of the theory is due to Mal'cev (\cite{Malcev-nilpotent-torsion-free-groups}) on finitely generated torsion-free nilpotent groups. A second formulation was introduced by Quillen in \cite{Quillen-rational-homotopy}.

Mal'cev completion has also been extended to a continuous $p$-adic version for pro-$p$ groups. Hain and Matsumoto introduced the definition of the continuous Mal'cev $\QQ_p$-completion of a profinite group in \cite{Hain-Matsumoto-malcev-completion}*{\S A.2}. Pridham in \cite{Pridham-malcev-completion}*{\S 1.1} defined a relative version of continuous Mal'cev completion which generalizes the usual continuous Mal'cev $\QQ_p$-completion. A more general formulation for the continuous Mal'cev completion of a topological group with respect to a topological field was later used by Betts in $p$-adic anabelian geometry (\cite{Betts-thesis}).

We briefly recall the definition of the continuous Mal'cev $\QQ_p$-completion. Let $G$ be a pro-$p$ group, which is finitely generated topologically. The continuous Mal'cev $\QQ_p$-completion of $G$ consists of a pro-unipotent group $G\widehat{\otimes} \QQ_p$ over $\QQ_p$, together with a continuous homomorphism $G\rightarrow (G\widehat{\otimes} \QQ_p)(\QQ_p)$, such that, for any continuous pro-unipotent group $U$ over $\QQ_p$, every continuous homomorphism $f:G\rightarrow U(\QQ_p)$ uniquely factors through $(G\widehat{\otimes} \QQ_p)(\QQ_p)$. 

Rather than introducing a new definition, the purpose of this paper is to construct the continuous Mal'cev $\QQ_p$-completion in three frameworks, as follows.

\begin{itemize}[leftmargin=0.25in]
    \item The first construction uses Tannakian formalism. The category of continuous unipotent $\QQ_p$-representations of $G$ yields a canonical pro-unipotent group over $\QQ_p$.
    \item The second construction is a continuous $p$-adic analogue of Quillen's complete Hopf algebra construction in \cite{Quillen-rational-homotopy}*{Appendix A}. Let $\widehat{\ZZ}_p[[G]]:=\varprojlim_U \widehat{\ZZ}_p[G/U]$, where $\{U\}$ ranges over the open normal subgroups of $G$. This completed group algebra is the $p$-adic analogue of the ordinary group algebra of $G$. Let $I$ be the augmentation ideal of $\widehat{\ZZ}_p[[G]]$. We then introduce the ``complete Hopf algebra'' $\QQ_p[[G]]^{\wedge}_I=\varprojlim_n \QQ_p\otimes_{\widehat{\ZZ}_p} (\widehat{\ZZ}_p[[G]]/\overline{I^n})$. The second construction is obtained by taking the group-like elements of this Hopf algebra.
    \item The third construction is a direct $\QQ_p$-analogue of Mal'cev's original construction for finitely generated nilpotent groups. Let $\Gamma^*$ denote the lower central series of $G$. The torsion-free quotient of each $G/\Gamma^r$ is analytically identified with an affine $p$-adic analytic manifold $\widehat{\ZZ}^{s_r}_p$. Under this identification, the group multiplication is polynomial and therefore extends from $\widehat{\ZZ}^{s_r}_p$ to $\QQ_p^{s_r}$. Finally, we take the inverse limit of these constructions for all $r$.
\end{itemize}

Our main result is the following theorem.

\begin{theorem}[Theorems \ref{Thm: equivalence between Malcev completion and Tannakian formalism of unipotent representations}, \ref{Thm: group-like elements of the Iwasawa algebra is the Malcev completion}, and \ref{Thm: equivalence between malcev completion and formal group law}]
Let $G$ be a topologically finitely generated pro-$p$ group. The three constructions described above yield pro-unipotent groups over $\QQ_p$ which are canonically isomorphic to the continuous Mal'cev $\QQ_p$-completion of $G$.
\end{theorem}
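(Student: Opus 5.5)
The plan is to verify, for each of the three constructions, the universal property defining $G\widehat{\otimes}\QQ_p$. Uniqueness of a universal object then gives the canonical isomorphisms. It is enough to test against unipotent (finite-dimensional) groups $U$, since a pro-unipotent group is an inverse limit of unipotent quotients and continuous maps into $U(\QQ_p)$ are compatible systems of maps into these quotients. I will also use the standard fact that a continuous homomorphism from a topologically finitely generated pro-$p$ group $G$ into a unipotent $U(\QQ_p)$ has image in a compact subgroup. Such a compact subgroup is contained in a $\widehat{\ZZ}_p$-lattice subgroup, which is itself a nilpotent pro-$p$ group. Consequently the map factors through some $G/\Gamma^r$.

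\textbf{Tannakian construction.} Let $\mathcal{C}$ be the category of continuous finite-dimensional $\QQ_p$-representations of $G$ that admit a $G$-stable filtration with trivial graded pieces. I will check that $\mathcal{C}$ is a neutral Tannakian category with the forgetful fibre functor, closed under subquotients, tensor products and duals. Every object has a unique simple subquotient, the trivial representation, so $\pi_1(\mathcal{C})$ is pro-unipotent. Given $f:G\to U(\QQ_p)$, composing with $U\hookrightarrow \GL(V)$ unipotent embeddings gives a tensor functor $\mathrm{Rep}(U)\to\mathcal{C}$. This yields $\pi_1(\mathcal{C})\to U$, and its uniqueness follows because $G$ is Zariski dense in $\pi_1(\mathcal{C})$; that density holds because a closed subgroup fixes exactly the $G$-invariants in all objects of $\mathcal{C}$.

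\textbf{Hopf algebra construction.} I will show that $\QQ_p[[G]]^\wedge_I$ is a complete Hopf algebra, with coproduct induced by $g\mapsto g\otimes g$ and continuity ensured by the closures $\overline{I^n}$. By the continuous analogue of Quillen's theorem, its group-like elements form $\exp$ of the primitive Lie algebra, which is pro-nilpotent. The resulting pro-unipotent group has coordinate ring equal to the continuous dual $\varinjlim_n \homo_{\cont}(\widehat{\ZZ}_p[[G]]/\overline{I^n},\QQ_p)$. I will identify this dual with the Hopf algebra of matrix coefficients of unipotent representations: a representation is unipotent of length $\le n$ exactly when $\widehat{\ZZ}_p[[G]]$ acts through $\widehat{\ZZ}_p[[G]]/\overline{I^n}$ after choosing a stable lattice. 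This identifies the result with the Tannakian group of the previous step.

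\textbf{Analytic construction and main obstacle.} Finite generation gives that each $G/\Gamma^r$ is a finitely generated nilpotent pro-$p$ group, hence $p$-adic analytic. Its torsion subgroup is finite, and the torsion-free quotient $N_r$ has a Mal'cev basis (from a central series with $\widehat{\ZZ}_p$ quotients) giving coordinates $N_r\cong\widehat{\ZZ}_p^{s_r}$. In these coordinates multiplication is polynomial (Lazard's theory, or Hall's polynomials extended by continuity), so it defines a unipotent group $\mathbb{U}_r$ over $\QQ_p$. A map $G\to U(\QQ_p)$ factors through some $G/\Gamma^r$, kills torsion because $U(\QQ_p)$ is torsion-free, and hence is a map $N_r\to U(\QQ_p)$. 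The main obstacle is extending this uniquely to an algebraic $\mathbb{U}_r\to U$. For that I will pass to Lie algebras: $\log N_r$ spans $\Lie\mathbb{U}_r$ and is an open $\widehat{\ZZ}_p$-lattice in it. A continuous homomorphism is $\widehat{\ZZ}_p$-linear on logarithms, since it commutes with $p$-adic powers by continuity. Its linear extension is a Lie homomorphism by the Baker–Campbell–Hausdorff formula, and exponentiating gives the morphism. Uniqueness holds because $N_r$ is Zariski dense. Taking $\varprojlim_r$ then gives the universal property.
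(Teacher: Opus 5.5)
Your Tannakian and Hopf-algebra parts are essentially the paper's arguments. Identifying the decompleted dual of $\QQ_p[[G]]^{\wedge}_I$ with matrix coefficients of unipotent representations, via stable lattices and the criterion that $\overline{I^n}$ acts trivially, has the same content as the paper's chain of tensor equivalences. You still need to check that $G\to\pi_1(\mathcal{C})(\QQ_p)$ is continuous; the paper does this by composing with faithful representations of the unipotent quotients.

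The genuine gap is in the step you flag as the main obstacle, and it has two parts.

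\emph{First}, $\log N_r$ need not be a $\widehat{\ZZ}_p$-lattice. Take $p=2$ and $G$ the free pro-$2$ group on two generators, so that $N_3=G/\Gamma^3G\cong\mathbb{U}_3(\widehat{\ZZ}_2)$. Then
\[
\log\begin{pmatrix}1&a&c\\0&1&b\\0&0&1\end{pmatrix}=\begin{pmatrix}0&a&c-ab/2\\0&0&b\\0&0&0\end{pmatrix},
\]
so the logarithms of the two standard generators lie in $\log N_3$, but their sum does not (it would require $c=1/2$).

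\emph{Second}, even on an honest lattice, commuting with $p$-adic powers only gives homogeneity of $\psi:=\log\circ f\circ\exp$, namely $\psi(\lambda X)=\lambda\psi(X)$ for $\lambda\in\widehat{\ZZ}_p$. Additivity is the real content of the step. The paper gets it from analyticity of continuous homomorphisms between $p$-adic analytic groups (Dixon--du Sautoy--Mann--Segal, Theorem 9.4), which supplies the differential $f_*$, and then applies the Lie correspondence.

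You can repair this without invoking analyticity.
\begin{enumerate}[leftmargin=0.25in]
\item The set $\log N_r$ is an open neighbourhood of $0$, stable under $\widehat{\ZZ}_p$-scaling and under $\mathrm{BCH}$. The map $\psi$ is continuous on it and intertwines $\mathrm{BCH}$.
\item Suppose $X,Y,X+Y\in\log N_r$, and set $W_j:=p^{-j}\mathrm{BCH}(p^jX,p^jY)$. Then $W_j\to X+Y$, so $W_j\in\log N_r$ for $j\gg 0$.
\item For such $j$, $p^j\psi(W_j)=\psi(\mathrm{BCH}(p^jX,p^jY))=\mathrm{BCH}(p^j\psi X,p^j\psi Y)$, hence $\psi(W_j)\to\psi X+\psi Y$. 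Continuity gives $\psi(W_j)\to\psi(X+Y)$, so $\psi(X+Y)=\psi X+\psi Y$.
\item Thus $\psi$ is additive on some lattice $p^kL\subset\log N_r$ and extends to a $\QQ_p$-linear map $\Psi$. Moreover $\Psi=\psi$ on all of $\log N_r$, because $p^jX\in p^kL$ for $j\gg0$.
\item Your BCH step now goes through. The identity $\Psi(\mathrm{BCH}(tX,tY))=\mathrm{BCH}(t\Psi X,t\Psi Y)$ holds as a polynomial identity in $t\in\widehat{\ZZ}_p$. Its $t^2$-coefficient shows that $\Psi$ preserves brackets, so $\exp\circ\Psi\circ\log$ is the desired morphism $\mathbb{U}_r\to U$.
\end{enumerate}
Repaired this way, your route is more elementary than the paper's: it needs no appeal to analyticity of continuous homomorphisms or to formal group laws, at the cost of this limit argument.
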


We also study several properties of the continuous Mal'cev $\QQ_p$-completion (Propositions \ref{Prop: Malcev completion is the inverse limit of Malcev completion of lower central series}, \ref{Prop: base change of Malcev completion} and Lemmas \ref{Lem: malcev completion of a finitely generated group is finitely generated}, \ref{Lem: Malcev completion preserves exactness of nilpotent groups}). Applications of these explicit constructions to topology and algebraic geometry will appear in a subsequent paper. 

\medskip

\subsubsection*{Acknowledgements}
R.H. is supported by 
NSF Grant 2247322. G.W. is partially supported by grants NSFC12325102, NSFC-12226002, the New Cornerstone Science Foundation, and Shanghai
Pilot Program for Basic Research–Fudan University 21TQ1400100 (21TQ002). We are grateful to Alexander Betts for generously sharing many of his ideas and arguments with us, particularly those used in Theorem \ref{Thm: equivalence between Malcev completion and Tannakian formalism of unipotent representations} and Theorem \ref{Thm: equivalence between mal'cev completion and group-like elements of Hopf algebra}.

\tableofcontents

\section{Continuous Mal'cev Completion}

In this section, we recall from \cite{Betts-thesis} the continuous Mal'cev completion of a topological group with respect to a topological field. We then establish several basic properties of this construction. Throughout this section, $k$ denotes a Hausdorff topological field of characteristic zero. 

\begin{definition}[\cite{Betts-thesis}*{Definition-Lemma 2.3.1}]\label{Fact: definition of continuous Malcev completion}
Let $G$ be a topological group. Consider the functor from pro-unipotent groups over $k$ to sets defined by $F(U)=\homo_{\cont}(G,U(k))$, which consists of continuous homomorphisms $G\rightarrow U(k)$. This functor is represented by a pro-unipotent group $G\widehat{\otimes} k$, which is called the \textbf{continuous Mal'cev $k$-completion} of $G$.
\end{definition}

Let $\Gamma^1G=G$ and $\Gamma^nG=(G,\Gamma^{n-1}G)$ be the lower central series of $G$. For each $n$, $\overline{\Gamma^n G}$ denotes the closure of $\Gamma^n G$ in $G$.

\begin{proposition}\label{Prop: Malcev completion is the inverse limit of Malcev completion of lower central series}
The natural morphism
$G\widehat{\otimes} k\rightarrow \varprojlim (G/\overline{\Gamma^n G})\widehat{\otimes} k$ is an isomorphism of pro-unipotent groups over $k$.
\end{proposition}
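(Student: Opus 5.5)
We will prove the isomorphism by comparing the functors the two sides represent: by Yoneda it suffices to show that for every pro-unipotent group $U$ over $k$ the induced map
\[
\varinjlim_n \homo(\varprojlim_m (G/\overline{\Gamma^m G})\widehat{\otimes} k,\,U)\longrightarrow \homo_{\cont}(G,U(k))
\]
is a bijection. Here we use the universal property of Definition \ref{Fact: definition of continuous Malcev completion} for each quotient $G/\overline{\Gamma^n G}$. The first step is therefore to make sense of homomorphisms out of an inverse limit of pro-unipotent groups. We write $U=\varprojlim_\alpha U_\alpha$ as a cofiltered limit of unipotent (finite type) quotients $U_\alpha$. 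A homomorphism from a pro-unipotent group $\varprojlim_n H_n$ into a unipotent $U_\alpha$ factors through some $H_n$, since $\mathcal{O}(U_\alpha)$ is finitely generated and $\mathcal{O}(\varprojlim H_n)=\varinjlim \mathcal{O}(H_n)$. Hence
\[
\homo(\varprojlim_n H_n, U)=\varprojlim_\alpha \varinjlim_n \homo(H_n,U_\alpha),
\]
and likewise $\homo_{\cont}(G,U(k))=\varprojlim_\alpha \homo_{\cont}(G,U_\alpha(k))$, because $U(k)=\varprojlim U_\alpha(k)$ carries the limit topology. This reduces the claim to the case where $U$ is unipotent.

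For $U$ unipotent we must show that
\[
\varinjlim_n \homo_{\cont}(G/\overline{\Gamma^n G},U(k))\to \homo_{\cont}(G,U(k))
\]
is bijective. Injectivity is immediate because $G\to G/\overline{\Gamma^nG}$ is surjective. For surjectivity, note that $U$ is unipotent, so $U(k)$ is nilpotent of some class $c$, with $\Gamma^{c+1}U(k)=1$. Any homomorphism $f\colon G\to U(k)$ then satisfies $f(\Gamma^{c+1}G)\subseteq \Gamma^{c+1}U(k)=1$. Since $f$ is continuous and $\{1\}$ is closed in $U(k)$ (as $k$ is Hausdorff, $U(k)\subset k^N$ is Hausdorff), $f^{-1}(1)$ is a closed subgroup containing $\Gamma^{c+1}G$, hence containing $\overline{\Gamma^{c+1}G}$. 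So $f$ factors through $G/\overline{\Gamma^{c+1}G}$ as an abstract homomorphism, and the factored map is continuous for the quotient topology, by the universal property of quotient topological groups.

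The main obstacle is the first step, namely the bookkeeping that identifies maps out of the inverse limit with compatible systems and ensures the maps are compatible in $n$ and natural in $U$. Once one verifies that the morphism in the statement is exactly the one inducing the above map on represented functors, Yoneda gives the isomorphism. We also need that $\varprojlim (G/\overline{\Gamma^nG})\widehat{\otimes}k$ is itself pro-unipotent. This holds because cofiltered limits of pro-unipotent groups are pro-unipotent: the coordinate ring is a filtered colimit of conilpotent Hopf algebras, which is again conilpotent.
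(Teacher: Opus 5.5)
Your proof is correct and follows essentially the paper's route: test against unipotent $U$, use that $U(k)$ is nilpotent and Hausdorff to kill $\overline{\Gamma^n G}$, apply the universal property of each quotient $G/\overline{\Gamma^n G}$, and conclude by Yoneda. Your explicit reduction from pro-unipotent to unipotent targets, via finite generation of $\mathcal{O}(U_\alpha)$ and the limit topology on $U(k)$, spells out a step the paper leaves implicit; the $\varinjlim_n$ in your first display is a harmless slip, since the term inside does not depend on $n$.
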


\begin{proof}
Let $U$ be a unipotent group over $k$. Then there exists $n$ such that $\Gamma U^n(k)=0$. Hence every continuous homomorphism $G\rightarrow U(k)$ kills $\Gamma^n G$ and therefore also its closure $\overline{\Gamma^n G}$, since $U(k)$ is Hausdorff. It follows that 
\[
\homo_{\cont}(G,U(k))\cong \varinjlim_n \homo_{\cont}(G/\overline{\Gamma^n G},U(k))\cong \varinjlim_n \homo((G/\overline{\Gamma^n G})\widehat{\otimes} k, U)\cong \homo(\varprojlim_n(G/\overline{\Gamma^n G})\widehat{\otimes} k, U)
\]
The universal property in Definition \ref{Fact: definition of continuous Malcev completion} therefore gives a canonical isomorphism $G\widehat{\otimes} k\cong \varprojlim (G/\overline{\Gamma^n G})\widehat{\otimes} k$.
\end{proof}

\begin{definition}
A topological group is \textbf{topologically finitely generated} if it contains a dense algebraically finitely generated subgroup.    
\end{definition}

\begin{definition}
Let $U$ be a pro-unipotent group over $k$. A subset $S$ of $U(k)$ is said to \textbf{generate $U$} if no proper closed subgroup scheme of $U$ contains $S$ in its set of $k$-points. The pro-unipotent group $U$ is called \textbf{finitely generated} if it admits a finite generating subset in this sense.
\end{definition}

\begin{lemma}\label{Lem: malcev completion of a finitely generated group is finitely generated}
If $G$ is a topologically finitely generated topological group, then its continuous Mal'cev completion $G\widehat{\otimes} k$ is finitely generated as a pro-unipotent group.
\end{lemma}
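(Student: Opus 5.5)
Let $x_1,\dots,x_m\in G$ generate a dense subgroup $\Gamma\subseteq G$, and let $\iota\colon G\to (G\widehat{\otimes} k)(k)$ be the universal continuous homomorphism. The plan is to show that the finite set $S=\{\iota(x_1),\dots,\iota(x_m)\}$ generates $G\widehat{\otimes} k$. Let $H\subseteq G\widehat{\otimes} k$ be a closed subgroup scheme with $S\subseteq H(k)$; we must show $H=G\widehat{\otimes} k$.

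First I will show that $\iota$ factors through $H(k)$. Since $H(k)$ is a subgroup containing $S$, it contains $\iota(\Gamma)$. Next, $H(k)$ is closed in $(G\widehat{\otimes} k)(k)$ for the natural topology, i.e.\ the inverse limit of the topologies on the $k$-points of the unipotent quotients, each induced from $k$ via coordinates. This holds because $H$ is cut out by polynomial equations in each finite-type quotient, $k$ is Hausdorff, and $H=\varprojlim H_\alpha$ with $H_\alpha$ the image of $H$ in the quotient $U_\alpha$. By continuity of $\iota$ and density of $\Gamma$, we get $\iota(G)\subseteq\overline{\iota(\Gamma)}\subseteq H(k)$. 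So $\iota$ is a continuous homomorphism $G\to H(k)$, where $H(k)$ carries the subspace topology.

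Then I will apply the universal property of Definition~\ref{Fact: definition of continuous Malcev completion}. Since $H$ is itself pro-unipotent (a closed subgroup of a pro-unipotent group), the map $G\to H(k)$ corresponds to a morphism $\phi\colon G\widehat{\otimes} k\to H$. Composing with the inclusion $j\colon H\hookrightarrow G\widehat{\otimes} k$ gives an endomorphism $j\circ\phi$ of $G\widehat{\otimes} k$ which corresponds to $\iota$. The identity also corresponds to $\iota$, so uniqueness gives $j\circ\phi=\mathrm{id}$. Hence $j$ is surjective, so $j$ is an isomorphism and $H=G\widehat{\otimes} k$.

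I expect the main obstacle to be the topological bookkeeping in the first step. One must check that the topology on $H(k)$ used when invoking the universal property matches the subspace topology, so that $G\to H(k)$ is continuous, and that $H(k)$ is closed. A cleaner route, which I would try first, is to work one unipotent quotient at a time. For each finite-type quotient $\pi_\alpha\colon G\widehat{\otimes} k\to U_\alpha$, the image $H_\alpha$ is a closed algebraic subgroup with $H_\alpha(k)$ closed in $U_\alpha(k)\cong k^N$ (as polynomial zero locus). The composite $G\to U_\alpha(k)$ is continuous and maps the dense subgroup $\Gamma$ into $H_\alpha(k)$, hence maps $G$ into $H_\alpha(k)$. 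The argument above then shows that every $U_\alpha$ is a quotient compatible with $H$, and passing to the limit gives $H=G\widehat{\otimes} k$.
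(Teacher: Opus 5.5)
Your proposal is correct and follows essentially the same route as the paper. You show $H(k)$ is closed by writing $H=\varprojlim H_\alpha$, with each $H_\alpha(k)\subset U_\alpha(k)$ a polynomial zero locus; you then use continuity and density to get $\iota(G)\subseteq H(k)$, and invoke the universal property to force $H\hookrightarrow G\widehat{\otimes}k$ to be an isomorphism. The differences are only cosmetic: the paper works with the smallest closed subgroup scheme whose $k$-points contain the image of the generators rather than an arbitrary one, and you spell out the splitting $j\circ\phi=\mathrm{id}$ and the topology comparison that the paper leaves implicit.
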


\begin{proof}
Let $U=G\widehat{\otimes} k$, and let $f:G\rightarrow U(k)$ be the universal homomorphism of Definition \ref{Fact: definition of continuous Malcev completion}. Choose a finite subset $S\subset G$ which algebraically generates a dense subgroup. Let $V\subset U$ be the smallest closed subgroup scheme whose $k$-points contain $f(S)$. By Proposition \ref{Fact: closedness of pro-unipotentness for algebraic group schemes}, $V$ is again pro-unipotent.

Write $U\cong \varprojlim_i U_i$, where each $U_i$ is a unipotent group over $k$. Let $V_i$ be the image of  $V\rightarrow U\rightarrow U_i$.

\textbf{Claim.} The canonical morphism $V\rightarrow \varprojlim_i V_i$ is an isomorphism.

\textit{Proof of the Claim.} 
Since $U\cong \varprojlim_i U_i$, its coordinate ring satisfies $\mathcal{O}(U)\varinjlim_i \mathcal{O}(U_i)$.
Let $I\subset \mathcal{O}(U)$ be the Hopf ideal defining $V$. Let $I_i$ be the kernel of $\mathcal{O}(U_i)\rightarrow \mathcal{O}(U)\rightarrow \mathcal{O}(U)/I$. Then $\mathcal{O}(U_i)/I_i$ is the coordinate ring of $V_i$. Moreover, $\varinjlim_i I_i=I$. Hence, $\OO(V)\cong \varinjlim_i \OO(V_i) $, which proves that $V\cong \varprojlim_i V_i$. This completes the proof of the claim.
\medskip

The functor of $k$-points preserves limits. The claim therefore identifies the subspace topology on $V(k)\subset U(k)$ with the inverse-limit topology on $\varprojlim_i V_i(k)$. Since each $V_i\subset U_i$ is a closed subgroup scheme, the subset $V_i(k)\subset U_i(k)$ is defined by polynomials, and is therefore closed by Proposition
\ref{Prop: topology of a unipotent group} and Lemma \ref{Lemma: continuity of kn}. Consequently,  $V(k)\subset U(k)$ is closed. Because $f(G)\subset V(k)$ and the algebraic subgroup generated by $S$ is dense in $G$, continuity of $f$ and closedness of $V(k)$ imply that  $f(G)\subset V(k)$. The universal property of $U=G\widehat{\otimes}k$ then forces the inclusion $V\hookrightarrow U$ to be an isomorphism.
\end{proof}

\begin{proposition}\label{Prop: base change of Malcev completion}
Let $G$ be a topologically finitely generated topological group. Let $k\subset K$ be a weakly cartesian pair of Hausdorff topological fields. Assume that $k$ is straight. Then there is a canonical isomorphism $G\widehat{\otimes}K \cong (G\widehat{\otimes} k)_K$, where the right-hand side denotes base change from $k$ to $K$.
\end{proposition}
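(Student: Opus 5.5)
The plan is to compare the two functors that $G\widehat{\otimes}K$ and $(G\widehat{\otimes} k)_K$ represent on pro-unipotent groups $W$ over $K$. The first represents $W\mapsto \homo_{\cont}(G,W(K))$. The second represents $W\mapsto \homo_K((G\widehat{\otimes}k)_K,W)=\homo_k(G\widehat{\otimes}k,\operatorname{Res}_{K/k}W)$, where $\operatorname{Res}$ is the restriction-of-scalars type right adjoint; this needs some care since $K/k$ need not be finite. So I would avoid restriction of scalars and argue directly.

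First I construct the comparison map. The universal map $f\colon G\to (G\widehat{\otimes}k)(k)$ composed with the inclusion $(G\widehat{\otimes}k)(k)\subset (G\widehat{\otimes}k)(K)=(G\widehat{\otimes}k)_K(K)$ is continuous. This is where the weakly cartesian hypothesis is used: it says $k^n\subset K^n$ carries the subspace topology compatibly with polynomial maps, so the $k$-points sit in the $K$-points continuously. By the universal property this gives a morphism $\phi\colon G\widehat{\otimes}K\to (G\widehat{\otimes}k)_K$.

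For the inverse, by Proposition~\ref{Prop: Malcev completion is the inverse limit of Malcev completion of lower central series} and limit arguments I reduce to unipotent targets. It then suffices to show that every continuous homomorphism $g\colon G\to W(K)$, with $W$ unipotent over $K$, factors uniquely through a $K$-morphism $(G\widehat{\otimes}k)_K\to W$.

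\emph{Uniqueness.} By Lemma~\ref{Lem: malcev completion of a finitely generated group is finitely generated}, $f(S)$ generates $G\widehat{\otimes}k$. I would show generation survives base change: the smallest closed subgroup of $(G\widehat{\otimes}k)_K$ containing $f(S)$ is defined over $k$, by Galois/descent-free reasoning on Hopf ideals spanned by $k$-rational data. Hence two morphisms agreeing on $f(S)$ agree.

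\emph{Existence.} This is the main obstacle. Choose a $k$-basis $\{e_\alpha\}$ of $K$. Using Lie algebras (unipotent groups are $\exp$ of nilpotent Lie algebras), view $W$ through $\operatorname{Lie}W$. The finite-dimensional $K$-space $\operatorname{Lie}W$ is a $k$-vector space, possibly infinite-dimensional, but $g(S)$ generates a finitely generated nilpotent $k$-Lie subalgebra $\mathfrak h\subset\operatorname{Lie}W$ after taking $\log$. Since $\mathfrak h$ is nilpotent and finitely generated, it is finite-dimensional over $k$. Let $H=\exp\mathfrak h$, a unipotent group over $k$, with $g(S)\subset H(k)$. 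Here straightness of $k$ is needed: I would use it to show $H(k)\subset W(K)$ is closed, with the subspace topology equal to its intrinsic topology from Proposition~\ref{Prop: topology of a unipotent group}. Then density of $\langle S\rangle$ and continuity give $g(G)\subset H(k)$, continuously. The universal property yields $G\widehat{\otimes}k\to H$, and base change followed by the $K$-linear map $H_K\to W$ (induced by $\mathfrak h\otimes_kK\to\operatorname{Lie}W$, a Lie algebra map) gives the desired factorization.

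Finally I check the two maps are mutually inverse, again using uniqueness on generators. I expect the topological comparison in the existence step to be the delicate point.
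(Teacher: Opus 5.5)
Your proposal is correct and is essentially the paper's argument: in both, $\log g(S)$ generates a finite-dimensional nilpotent $k$-Lie subalgebra of $\Lie W$, which is closed and homeomorphic to some $k^r$ by the weakly cartesian and straightness hypotheses, so $g$ factors continuously through a unipotent $k$-group, and one concludes by the universal property over $k$ followed by extension of scalars. The paper packages existence and uniqueness as a single chain of Hom-set bijections (using finite generation of $\Lie(G\widehat{\otimes}k)$ and the tensor--restriction adjunction), which justifies your uniqueness step more cleanly than the descent sketch; also, continuity of $(G\widehat{\otimes}k)(k)\to(G\widehat{\otimes}k)(K)$ only needs $k$ to carry the subspace topology, while the weakly cartesian hypothesis is genuinely used for the closedness of $H(k)$ in $W(K)$.
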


\begin{proof}
Let $U$ be a unipotent group over $K$. The restriction of scalars from $K$ to $k$ allows us to regard $\Lie(U)$ as a possibly infinite-dimensional nilpotent $k$-Lie algebra. As a $k$-Lie algebra, $\Lie(U)$ is the filtered union of its finite-dimensional nilpotent $k$-Lie subalgebras $L_i$. By Proposition \ref{Prop: direct sum of weakly cartesian is weakly cartesian} and Proposition \ref{Prop: base change of vector spaces over weakly cartesian pairs}, each $L_i$ is closed in $\Lie(U)$ and is linearly homeomorphic to $k^{r_i}$ for some $r_i$. Let $U_i$ be the unipotent group over $k$ corresponding to the nilpotent Lie algebra $L_i$. Under the exponential map and the Baker-Campbell-Hausdorff formula, there is a canonical group isomorphism $U(K)\cong \varinjlim_i U_i(k)$.

Choose a finite subset $S\subset G$ which topologically generates $G$. Let $f:G\rightarrow U(K)$ be a continuous homomorphism, and consider the continuous map $\log\circ f:G\rightarrow \Lie(U)$. There exists a finite-dimensional nilpotent $k$-Lie subalgebra $L_i\subset \Lie(U)$ which contains $(\log\circ f)(S)$. Since $L_i\subset \Lie(U)$ is closed and the subgroup generated by $S$ is dense in $G$, $L_i$ contains $(\log\circ f)(G)$. Hence $f$ factors through the inclusion $U_i(k)\hookrightarrow U(K)$. Therefore, the canonical map $\varinjlim_i \homo_{\cont}(G,U_i(k))\rightarrow \homo_{\cont}(G,U(K))$ is bijective. 

By the universal property in Definition \ref{Fact: definition of continuous Malcev completion}, $\varinjlim_i \homo_{\cont}(G,U_i(k))\cong \varinjlim_i\homo (G\widehat{\otimes} k, U_i)$.  

Using the equivalence between pro-unipotent groups and pro-nilpotent Lie algebras in \cite{Betts-thesis}*{Theorem 2.1.4}, we obtain $\varinjlim_i\homo (G\widehat{\otimes} k, U_i)\cong \varinjlim_i \homo(\Lie(G\widehat{\otimes} k), \Lie(U_i))$. 

By Lemma \ref{Lem: malcev completion of a finitely generated group is finitely generated}, the pro-unipotent group $G\widehat{\otimes}k$, and hence its Lie algebra $\Lie(G\widehat{\otimes} k)$ is finitely generated. Choose a finite generating set $S'$ for $\Lie(G\widehat{\otimes} k)$. For every $k$-Lie algebra morphism $\phi:\Lie(G\widehat{\otimes} k)\rightarrow \Lie(U)$, the finite set $\phi(S')$ is contained in some finite-dimensional nilpotent $k$-Lie subalgebra $L_i$. Consequently, $\varinjlim_i \homo_k(\Lie(G\widehat{\otimes} k), \Lie(U_i))\cong \homo_k(\Lie(G\widehat{\otimes} k), \Lie(U))$.

By the tensor-restriction adjunction, $\homo_k(\Lie(G\widehat{\otimes} k), \Lie(U))\cong\homo_K(\Lie(G\widehat{\otimes} k)\otimes K, \Lie(U))$. Under the Lie-theoretic equivalence, the latter is naturally bijective to $\homo((G\widehat{\otimes} k)_K, U)$.

Thus, $(G\widehat{\otimes}k)_K$ represents the functor $U\rightarrow \homo_{\cont}(G,U(K))$. By Definition \ref{Fact: definition of continuous Malcev completion}, it is canonically isomorphic to $G\widehat{\otimes} K$.
\end{proof}

\begin{example}
Let $G$ be a topologically finitely generated abelian pro-$p$ group. Then $G$ is a finitely generated $\widehat{\ZZ}_p$-module. Its continuous Mal'cev $\QQ_p$-completion is the additive affine group scheme associated with the finite-dimensional $\QQ_p$-vector space $G\otimes_{\widehat{\ZZ}_p} \QQ_p$. \qed
\end{example}

\section{Tannakian Construction}

In this section, we construct continuous Mal'cev completion using the Tannakian formalism. We first review the Tannakian formalism and construct a pro-unipotent group $U_{G,k}$ for every topological group $G$ (Lemma \ref{Lem: Tannakian formalism of unipotent representations give us pro-unipotent groups}). We then show that the pro-unipotent group $U_{G,k}$ is the continuous Mal'cev completion of $G$ (Theorem \ref{Thm: equivalence between Malcev completion and Tannakian formalism of unipotent representations}).

\subsection{Construction by Tannakian Formalism}\,

For a field $F$, let $\mathbf{Vect}_F$ denote the category of finite-dimensional $F$-vector spaces. We refer to \cite{Deligne-tannakian-categories}*{Definition 1.7} for the definition of a \textbf{rigid tensor category}.

\begin{definition}[\cite{Deligne-tannakian-categories}*{Definition 2.19}]
Let $F$ be a field of characteristic zero. A small rigid abelian tensor category $(\mathcal{C},\otimes)$ satisfying $\End(\mathbf{1})=F$ is \textbf{neutral Tannakian over $F$} if it admits a faithful exact $F$-linear tensor functor $\omega:\mathcal{C}\rightarrow \mathbf{Vect}_F$.
\end{definition}

Let $(\mathcal{C},\otimes,\omega)$ be a neutral Tannakian category. Define a functor $\underline{\text{Aut}}^{\otimes}(\omega)$ from $F$-algebras to sets as follows. For an $F$-algebra $R$, let $\underline{\text{Aut}}^{\otimes}(\omega)(R)$ be the set of families $\{\lambda_X\}_{X\in \mathcal{C}}$, where each $\lambda_X$ is an $R$-linear automorphism of $\omega(X)\otimes_F R$ such that $\lambda_{X_1\otimes X_2}=\lambda_{X_1}\otimes \lambda_{X_2}$ for all $X_1,X_2\in \mathcal{C}$, $\lambda_{\mathbf{1}}=Id_R$, and, for every morphism $\alpha:X\rightarrow Y$, $\lambda_Y\circ (\omega(\alpha)\otimes \mathbf{1})=(\omega(\alpha)\otimes \mathbf{1})\circ \lambda_X:\omega(X)\otimes_F R\rightarrow \omega(Y)\otimes_F R$.

Recall the following theorem of the so-called \textbf{Tannakian formalism}.

\begin{theorem}[\cite{Deligne-tannakian-categories}*{Theorem 2.11}, Tannakian Formalism]\label{Fact: Tannakian Formalism}
Let $F$ be a field of characteristic zero. Let $(\mathcal{C},\otimes,\omega)$ be a neutral Tannakian category over $F$. Then
\begin{enumerate}[leftmargin=0.25in]
    \item the functor $\underline{\text{Aut}}^{\otimes}(\omega)$ is represented by an affine group scheme $A$ over $F$;
    \item the functor $\omega$ factors through a tensor categorical equivalence between $\mathcal{C}$ and the category $\mathbf{Rep}_{F}(A)$ of finite-dimensional $F$-representations of $A$.
\end{enumerate}
\end{theorem}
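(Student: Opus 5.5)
The statement is Deligne's reconstruction theorem, cited from \cite{Deligne-tannakian-categories}. The standard route is to build the coordinate ring of $A$ as a coend and then reduce to the comodule reconstruction theorem. Concretely, I would set
\[
\mathcal{O}(A)\;=\;\int^{X\in\mathcal{C}} \omega(X)^\vee\otimes_F \omega(X),
\]
which is the quotient of $\bigoplus_{X}\omega(X)^\vee\otimes\omega(X)$ by the relations $\xi\circ\omega(\alpha)\otimes v \sim \xi\otimes\omega(\alpha)v$ for every morphism $\alpha:X\to Y$. The coalgebra structure comes from the evaluation/coevaluation maps on each summand. Multiplication comes from $\omega(X_1)\otimes\omega(X_2)\cong\omega(X_1\otimes X_2)$, the unit from $\mathbf{1}$, and the antipode from the duals that exist because $\mathcal{C}$ is rigid. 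Commutativity of $\mathcal{O}(A)$ uses the symmetry constraint of the tensor structure.

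For part (1), I would check that $\operatorname{Hom}_{F\text{-alg}}(\mathcal{O}(A),R)$ is exactly $\underline{\text{Aut}}^{\otimes}(\omega)(R)$. The key observation is that a linear map $\mathcal{O}(A)\to R$ is the same thing as a family of $R$-linear endomorphisms $\lambda_X$ of $\omega(X)\otimes R$ that is natural in $X$. Under this identification, multiplicativity of the map corresponds to $\lambda_{X_1\otimes X_2}=\lambda_{X_1}\otimes\lambda_{X_2}$, and preserving the unit corresponds to $\lambda_{\mathbf 1}=\mathrm{Id}$. Rigidity then forces every $\lambda_X$ to be invertible, with inverse given by the transpose of $\lambda_{X^\vee}$. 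This proves that $A=\operatorname{Spec}\mathcal{O}(A)$ represents the functor.

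For part (2), each $\omega(X)$ is naturally an $\mathcal{O}(A)$-comodule, so $\omega$ lifts to a tensor functor $\mathcal{C}\to\mathbf{Rep}_F(A)$. To show this functor is an equivalence, I would reduce to the case of small subcategories. Write $\mathcal{C}$ as the filtered union of the full abelian subcategories $\langle X\rangle$ consisting of subquotients of $X^{\oplus n}$. On $\langle X\rangle$ the coend is a finite-dimensional coalgebra $C_X$. I would then prove that $\langle X\rangle\simeq\mathbf{Comod}(C_X)$ by identifying $\langle X\rangle$ with modules over $E=\operatorname{End}(\omega|_{\langle X\rangle})$, which embeds in $\operatorname{End}(\omega(X))$, and using $C_X=E^\vee$. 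Passing to the colimit over $X$ then gives the equivalence $\mathcal{C}\simeq\mathbf{Comod}(\mathcal{O}(A))=\mathbf{Rep}_F(A)$.

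The main obstacle is the finite-level step $\langle X\rangle\simeq E\text{-}\mathbf{mod}$. For this I would use that $\omega$ is faithful and exact, so that $\omega|_{\langle X\rangle}$ is pro-representable by a projective generator. Its endomorphism ring is then $E^{\mathrm{op}}$, and Morita theory gives the equivalence. Essential surjectivity needs the additional argument that every $E$-module is a subquotient of $\omega(X)^{\oplus n}$, which uses the finite-dimensionality of $E$.
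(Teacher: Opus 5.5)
The paper gives no proof of this statement; it cites Theorem 2.11 of the Tannakian-categories reference. Your sketch is correct and is essentially that standard proof: your coend is canonically $\varinjlim_X \End(\omega|_{\langle X\rangle})^{\vee}$, and the other steps (finite-level Morita equivalence on $\langle X\rangle$, passage to the filtered colimit, commutative Hopf structure from $\otimes$, symmetry and duals, and invertibility of tensor endomorphisms via rigidity) are exactly its ingredients. One wording fix: $\omega|_{\langle X\rangle}$ is representable by an actual object $P\in\langle X\rangle$, not merely pro-representable. Namely, $P$ is the smallest subobject of $X^{\oplus \dim\omega(X)}$ whose fibre contains the tautological element, and this genuine projective generator is what the Morita step needs.
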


\begin{definition}
Let $F$ be a field of characteristic zero, and let $G$ be an abstract group. A finite-dimensional $F$-representation $\rho:G\rightarrow \GL(V)$ is \textbf{unipotent} if one of the following equivalent conditions holds:
\begin{enumerate}[leftmargin=0.25in]
    \item there exists a basis of $V$ with respect to which  $\rho(G)$ is contained in the group $\mathbb{U}_{m,F}(F)$ of upper-unitriangular matrices;
    \item there exists a $G$-stable flag $V=V_m\supset ...\supset V_1\supset 0$ such that $G$ acts trivially on each successive quotient $V_i/V_{i-1}$.
\end{enumerate}
\end{definition} 

Let $k$ be a Hausdorff topological field of characteristic zero, and let $V$ be an $m$-dimensional $k$-vector space. A choice of basis of $V$ identifies $\End(V)$ with $M_m(k)\cong k^{m^2}$, and therefore equips it with a topology. By Lemma \ref{Lemma: continuity of kn}, this topology is independent of the chosen basis.

\begin{definition}
Let $G$ be a topological group. A \textbf{continuous finite-dimensional $k$-representation} of $G$ is a continuous homomorphism $\rho:G\rightarrow \GL(V)$, where $V$ is a finite-dimensional $k$-vector space. A \textbf{morphism} $(V_1,\rho_1)\rightarrow (V_2,\rho_2)$ between continuous representations is a $G$-equivariant $k$-linear map $V_1\rightarrow V_2$. Let $\mathbf{Rep}_{k,\cont}(G)$ denote the category of continuous finite-dimensional $k$-representations of $G$, and let $\mathbf{URep}_{k,\cont}(G)$ denote its full subcategory of unipotent representations.
\end{definition}

\begin{proposition}
Each of $\mathbf{Rep}_{k,\cont}(G)$ and $\mathbf{URep}_{k,\cont}(G)$, equipped with its forgetful functor to $\mathbf{Vect}_k$, is a neutral Tannakian category over $k$.
\end{proposition}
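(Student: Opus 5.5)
The plan is to check the axioms of a neutral Tannakian category directly, first for $\mathbf{Rep}_{k,\cont}(G)$, and then to show that $\mathbf{URep}_{k,\cont}(G)$ is a full subcategory closed under all the relevant operations. The forgetful functor $\omega$ to $\mathbf{Vect}_k$ is clearly faithful, $k$-linear and tensor-compatible. Once we know that kernels and cokernels are computed on underlying vector spaces, it is also exact. So the work is to show that the category is abelian, rigid, and satisfies $\End(\mathbf{1})=k$, with all constructions staying continuous. Smallness is handled in the usual way, by restricting to representations on $k^m$ for $m\ge 0$; this gives an equivalent small category.

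\textbf{Step 1: abelian structure.} Let $\phi:(V_1,\rho_1)\to(V_2,\rho_2)$ be $G$-equivariant. Then $\Ker\phi$, $\Im\phi$ and $\operatorname{Coker}\phi$ are $G$-stable subspaces or quotients. To see that the induced action is continuous, choose a basis of $V_1$ extending a basis of $\Ker\phi$. In this basis $\rho_1(g)$ is block upper triangular. The actions on the subspace and on the quotient are given by the diagonal blocks, so they are obtained from $\rho_1$ by projecting onto some matrix coordinates, which is a continuous operation. By Lemma \ref{Lemma: continuity of kn}, the topology does not depend on the basis, so this choice is harmless. Coimage and image agree because this already holds in $\mathbf{Vect}_k$. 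Hence the category is abelian and $\omega$ is exact.

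\textbf{Step 2: tensor structure and rigidity.} On $V_1\otimes V_2$, the matrix entries of $\rho_1(g)\otimes\rho_2(g)$ are products of entries of $\rho_1(g)$ and $\rho_2(g)$. On $V^\vee$ the action is $g\mapsto \rho(g^{-1})^{\transpose}$. Its entries are entries of $\rho(g^{-1})$, and these are continuous because inversion in $G$ is continuous. (This sidesteps the question of whether inversion on $\GL_m(k)$ is continuous, which would require $k$ to be a topological field.) Multiplication in $k$ is continuous, so all these maps are continuous. The associativity, commutativity and unit constraints, as well as evaluation and coevaluation, are the standard ones from $\mathbf{Vect}_k$, and they are $G$-equivariant. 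The unit object is $k$ with the trivial action. Its endomorphisms are all $k$-linear maps $k\to k$, so $\End(\mathbf{1})=k$.

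\textbf{Step 3: the unipotent subcategory.} It remains to check that $\mathbf{URep}_{k,\cont}(G)$ contains $\mathbf{1}$ and is closed under subobjects, quotients, tensor products and duals. Closure under kernels and cokernels then follows, so it is an abelian tensor subcategory and the restriction of $\omega$ keeps all its properties. For a $G$-stable flag $(V_i)$ with trivial graded action:
\begin{itemize}
\item intersecting the flag with a subrepresentation $W$ gives a suitable flag $(V_i\cap W)$ on $W$;
\item taking images gives a suitable flag on any quotient;
\item on $V\otimes W$, the filtration $\sum_{i+j=n}V_i\otimes W_j$ has graded pieces $\bigoplus_{i+j=n}(V_i/V_{i-1})\otimes(W_j/W_{j-1})$, on which $G$ acts trivially;
\item on $V^\vee$, the annihilators $(V/V_i)^\vee$ form a flag whose graded pieces are duals of trivial representations.
\end{itemize}

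The only point that needs care is Step 1, continuity of the induced actions on subquotients; the block-triangular argument above handles it.
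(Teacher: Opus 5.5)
Your proof is correct and follows essentially the same route as the paper: the paper treats $\mathbf{Rep}_{k,\cont}(G)$ as a full abelian tensor subcategory of all finite-dimensional representations, checks closure under kernels, cokernels, tensor products (matrix coefficients are products) and duals, and says the unipotent case is analogous. Your version adds the details the paper omits. You prove continuity on subquotients via block-triangular matrices, handle duals through continuity of inversion in $G$ rather than matrix inversion in $\GL(V)$, address smallness, and explicitly verify that unipotent representations are closed under subobjects, quotients, tensor products and duals.
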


\begin{proof}
Let $\mathbf{Rep}_{k}(G)$ denote the category of all finite-dimensional $k$-representations of $G$, without any continuity requirement. This is a rigid abelian tensor category whose unit object is the trivial representation $\mathbf{1}$, with $\End(\mathbf{1})=k$. The forgetful functor $\omega:\mathbf{Rep}_{k}(G)\rightarrow \mathbf{Vect}_k$ is faithful, exact, $k$-linear and compatible with tensor products. Thus, $(\mathbf{Rep}_{k}(G),\omega)$ is neutral Tannakian over $k$. It remains to verify that $\mathbf{URep}_{k,\cont}(G)$ and $\mathbf{Rep}_{k,\cont}(G)$ are full rigid abelian tensor subcategories of $\mathbf{Rep}_{k}(G)$. We prove only the case $\mathbf{Rep}_{k,\cont}(G)$ since the other case is analogous.

The inclusion $\mathbf{Rep}_{k,\cont}(G)\subset \mathbf{Rep}_{k}(G)$ is full. Kernels and cokernels of morphisms between continuous representations inherit continuous $G$-actions, so this subcategory is abelian. The trivial representation is continuous, so the unit object belongs to $\mathbf{Rep}_{k,\cont}(G)$. It remains to verify that $\mathbf{Rep}_{k,\cont}(G)$ is closed under tensor products and duals.

We first show that the tensor product of two continuous representations $(V,\rho_V)$ and $(W,\rho_W)$ is continuous. Set $m=\dim_k V$ and $n=\dim_k W$. After choosing bases, regard $\rho_V$ and $\rho_W$ as continuous maps $G\rightarrow k^{m^2}$ and $G\rightarrow k^{n^2}$. Each matrix coefficient of the tensor-product representation $\rho_{V\otimes W}:G\rightarrow k^{m^2n^2}$ is a product of a matrix coefficient of $\rho_V$ and a matrix coefficient of $\rho_W$. Hence, every matrix coefficient of $\rho_{V\otimes W}$ and therefore $\rho_{V\otimes W}$ are continuous. 

The dual representation is continuous because inversion and transpose are continuous polynomial operations on $\GL(V)$. This completes the proof.
\end{proof}

\begin{lemma}\label{Lemma: the construction of continuous Malcev completion by Tannakian formalism}
Let $U_{G,k}$ be the affine group scheme  associated to $(\mathbf{URep}_{k,\cont}(G),\omega)$. Then there exists a canonical homomorphism $f:G\rightarrow U_{G,k}(k)$ such that, for every continuous finite-dimensional unipotent $k$-representation $\rho_V:G\rightarrow \GL(V)$, the induced representation $\rho_{U_{G,k}}:U_{G,k}\rightarrow \GL(V)$ satisfies $\rho_V=\rho_{U_{G,k}}\circ f$.
\end{lemma}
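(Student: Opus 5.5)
The plan is to define $f$ directly through the functor-of-points description of $U_{G,k}$ from Theorem \ref{Fact: Tannakian Formalism}. By part (1), $U_{G,k}(k)=\underline{\text{Aut}}^{\otimes}(\omega)(k)$. So to give an element of $U_{G,k}(k)$ it suffices to give a family $\{\lambda_X\}_{X\in \mathbf{URep}_{k,\cont}(G)}$ of $k$-linear automorphisms of $\omega(X)$ that is compatible with tensor products, the unit and morphisms. For $g\in G$ I will set
\[
f(g):=\{\rho_X(g)\}_{X},
\]
where $\rho_X(g)\in \GL(\omega(X))$ is the action of $g$ on the underlying vector space of $X$.

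The steps, in order, are as follows.

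\begin{enumerate}[leftmargin=0.25in]
\item Check that $f(g)$ is a tensor automorphism.
\begin{itemize}
\item Naturality: every morphism $\alpha:X\to Y$ in $\mathbf{URep}_{k,\cont}(G)$ is $G$-equivariant, so $\omega(\alpha)\circ\rho_X(g)=\rho_Y(g)\circ\omega(\alpha)$.
\item Tensor compatibility: $G$ acts diagonally on $X_1\otimes X_2$, so $\rho_{X_1\otimes X_2}(g)=\rho_{X_1}(g)\otimes\rho_{X_2}(g)$.
\item Unit: the unit $\mathbf{1}$ is the trivial representation, so $\rho_{\mathbf{1}}(g)=\mathrm{Id}$.
\end{itemize}
Hence $f(g)\in U_{G,k}(k)$.
\item Check that $f$ is a homomorphism. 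Each $\rho_X$ is a homomorphism, so $f(gh)=f(g)f(h)$ componentwise. The group structure on $\underline{\text{Aut}}^{\otimes}(\omega)(k)$ is componentwise composition, so $f$ is a group homomorphism.
\item Check the factorization $\rho_V=\rho_{U_{G,k}}\circ f$. By part (2) of Theorem \ref{Fact: Tannakian Formalism}, $V$ corresponds to a representation $\rho_{U_{G,k}}:U_{G,k}\to\GL(\omega(V))$. I will unwind the equivalence: an $R$-point $\lambda=\{\lambda_X\}$ of $\underline{\text{Aut}}^{\otimes}(\omega)$ acts on $\omega(V)\otimes R$ by $\lambda_V$. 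Evaluating at $R=k$ and $\lambda=f(g)$ then gives $\rho_{U_{G,k}}(f(g))=\rho_V(g)$.
\item Record canonicity: the construction uses no choices, and it is functorial in $G$.
\end{enumerate}

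The main obstacle is step 3. It requires making explicit that the equivalence $\mathbf{URep}_{k,\cont}(G)\simeq\mathbf{Rep}_k(U_{G,k})$ in \cite{Deligne-tannakian-categories} is the tautological one, namely the one in which $\lambda\in\underline{\text{Aut}}^{\otimes}(\omega)(R)$ acts on $\omega(X)\otimes R$ through its component $\lambda_X$. I would cite the construction in the proof of \cite{Deligne-tannakian-categories}*{Theorem 2.11} for this, rather than reprove it.

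Continuity of $f$ is not part of the statement, so I would not address it here. Unipotence of $U_{G,k}$ is also not needed for this lemma and is deferred to Lemma \ref{Lem: Tannakian formalism of unipotent representations give us pro-unipotent groups}.
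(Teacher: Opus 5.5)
Your proposal is correct and follows the paper's proof. In both, $f(g)$ is the tensor automorphism $\{\rho_X(g)\}_X$ of the forgetful functor, and $\rho_V=\rho_{U_{G,k}}\circ f$ follows because the equivalence in Deligne's Theorem 2.11 is the tautological one, where a point $\{\lambda_X\}$ of $\underline{\text{Aut}}^{\otimes}(\omega)$ acts on $\omega(V)$ through $\lambda_V$; your checks of naturality, tensor compatibility, the unit condition and multiplicativity just spell out what the paper leaves implicit.
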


\begin{proof}
By the construction in the proof of \cite{Deligne-tannakian-categories}*{Theorem 2.11}, the affine scheme $U_{G,k}$ represents $\underline{\text{Aut}}^{\otimes}(\omega)$. Each $g\in G$ defines a tensor automorphism of $\omega$ by acting on every continuous unipotent representation of $G$. Thus, $g$ determines an element of $\underline{\text{Aut}}^{\otimes}(\omega)(k)$. These tensor automorphisms define a canonical homomorphism $f:G\rightarrow U_{G,k}(k)$. Under the Tannakian equivalence (Theorem \ref{Fact: Tannakian Formalism}),
every $V\in \mathbf{URep}_{k,\cont}(G)$ becomes a representation of $U_{G,k}(k)=\underline{\text{Aut}}^{\otimes}(\omega)$ and its original $G$-action is recovered by restriction along $f$. Equivalently, $\rho_V=\rho_{U_{G,k}}\circ f$.
\end{proof}

\begin{lemma}\label{Lem: Tannakian formalism of unipotent representations give us pro-unipotent groups}
Retain the same notation as Lemma \ref{Lemma: the construction of continuous Malcev completion by Tannakian formalism}.
The affine group scheme $U_{G,k}$ is pro-unipotent.
\end{lemma}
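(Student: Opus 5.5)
We will show that $U_{G,k}$ is a filtered inverse limit of unipotent algebraic groups. The standard route via Tannakian formalism is as follows. By Theorem \ref{Fact: Tannakian Formalism}, $U_{G,k}$ is the affine group scheme representing $\underline{\text{Aut}}^{\otimes}(\omega)$ on $\mathbf{URep}_{k,\cont}(G)$. Every affine group scheme over a field is the filtered inverse limit of its algebraic quotients. Concretely, for each object $X$ of $\mathbf{URep}_{k,\cont}(G)$, let $\langle X\rangle$ denote the full Tannakian subcategory generated by $X$: its objects are subquotients of finite direct sums of $X^{\otimes a}\otimes (X^\vee)^{\otimes b}$. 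Then $U_{G,k}\cong \varprojlim_X U_X$, where $U_X$ is the algebraic group attached to $(\langle X\rangle,\omega)$, which equals the image of $U_{G,k}\to \GL(\omega(X))$ (\cite{Deligne-tannakian-categories}*{Proposition 2.20}). The index category is filtered, since $\langle X\rangle$ and $\langle Y\rangle$ are both contained in $\langle X\oplus Y\rangle$. It therefore suffices to prove that each $U_X$ is unipotent.

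To do this, fix $X=(V,\rho_V)$ and choose a $G$-stable flag $V=V_m\supset\dots\supset V_1\supset 0$ with trivial action on the graded pieces. Each $V_i$ is a subobject of $X$ in $\mathbf{URep}_{k,\cont}(G)$. By the Tannakian equivalence, the $V_i$ are therefore $U_X$-stable subspaces, and each $V_i/V_{i-1}$ is a quotient object carrying the trivial $G$-action. A quotient with trivial action is isomorphic to a direct sum of copies of $\mathbf{1}$, so $U_X$ acts trivially on it. Hence $U_X\subset \GL(V)$ stabilizes the flag and acts trivially on the graded pieces. In the adapted basis this means $U_X$ is a closed subgroup of the upper-unitriangular group $\mathbb{U}_{m,k}$. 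Closed subgroups of unipotent groups are unipotent, so $U_X$ is unipotent.

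The main point requiring care is the identification of $U_X$ with the image of $U_{G,k}$ in $\GL(\omega(X))$, together with the fact that $U_{G,k}=\varprojlim_X U_X$, i.e.\ that $\mathcal{O}(U_{G,k})$ is the union of the Hopf subalgebras $\mathcal{O}(U_X)$. This is standard: by the reconstruction, $\mathcal{O}(U_{G,k})$ is the union of the matrix coefficient coalgebras of the objects. Alternatively, one can argue directly that every finite-dimensional representation of $U_{G,k}$ comes from an object of $\mathbf{URep}_{k,\cont}(G)$ and is hence unipotent, by the flag argument above. An affine group scheme all of whose finite-dimensional representations are unipotent is pro-unipotent; this is the usual criterion that every nonzero representation has a nonzero fixed vector. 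Either formulation gives the lemma, and the second avoids choosing the filtered system explicitly.
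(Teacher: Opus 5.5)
Your argument is correct. The alternative you sketch at the end is exactly the paper's proof. The paper notes that every nonzero object $V$ of $\mathbf{URep}_{k,\cont}(G)$ satisfies $\homo(\mathbf{1},V)\neq 0$. It transports this through the equivalence of Theorem \ref{Fact: Tannakian Formalism} to get a nonzero invariant vector in every finite-dimensional representation of $U_{G,k}$. It then concludes via criterion (2) of Definition \ref{Definition: definition of unipotent groups}.

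Your primary route is different. You write $U_{G,k}=\varprojlim_X U_X$ with $U_X$ the image in $\GL(\omega(X))$, and use the $G$-stable flag to embed each $U_X$ in $\mathbb{U}_{m,k}$, which is criterion (3). This checks the definition of pro-unipotence (unipotent algebraic quotients) more literally and more explicitly. The cost is importing extra standard Tannakian facts:
\begin{itemize}
\item the identification $\langle X\rangle\simeq\mathbf{Rep}_k(U_X)$;
\item the filtered-union description $\mathcal{O}(U_{G,k})=\bigcup_X\mathcal{O}(U_X)$;
\item the small observation that any algebraic quotient of $U_{G,k}$ factors through some $U_X$, since its coordinate ring is a finitely generated Hopf subalgebra of a filtered union. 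This is what makes ``it suffices that each $U_X$ is unipotent'' literally true.
\end{itemize}
The paper's route needs only the Tannakian equivalence itself, together with the fact that the unit object corresponds to the trivial representation. So your second formulation is the more economical of the two.
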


\begin{proof}
Every nonzero object $V\in \mathbf{URep}_{k,\cont}(G)$ is unipotent and therefore has a nonzero $G$-fixed vector. Hence, $\homo_{\mathbf{URep}_{k,\cont}(G)}(\mathbf{1},V)\neq 0$. Under the Tannakian equivalence (Theorem \ref{Fact: Tannakian Formalism}), every finite-dimensional representation of $U_{G,k}$ has a nonzero invariant vector. Definition \ref{Definition: definition of unipotent groups} implies that $U_{G,k}$ is pro-unipotent.
\end{proof}

\subsection{Equivalence to Continuous Mal'cev Completion}\,

In this part, we prove that the pro-unipotent group $U_{G,k}$ constructed in Lemma \ref{Lem: Tannakian formalism of unipotent representations give us pro-unipotent groups} is the continuous Mal'cev completion of $G$.

\begin{lemma}
The canonical homomorphism $f:G\rightarrow U_{G,k}(k)$ of Lemma \ref{Lemma: the construction of continuous Malcev completion by Tannakian formalism} is continuous.
\end{lemma}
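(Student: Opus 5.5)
The plan is to reduce continuity of $f$ to continuity of finitely many matrix coefficients, each of which comes from a continuous unipotent representation of $G$.

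First, recall how $U_{G,k}(k)$ is topologized. Since $U_{G,k}$ is pro-unipotent (Lemma \ref{Lem: Tannakian formalism of unipotent representations give us pro-unipotent groups}), we can write $U_{G,k}\cong\varprojlim_i U_i$ with each $U_i$ unipotent of finite type. The topology on $U_{G,k}(k)=\varprojlim_i U_i(k)$ is then the inverse limit topology, where each $U_i(k)$ carries the topology from a closed embedding into affine space (Proposition \ref{Prop: topology of a unipotent group}, Lemma \ref{Lemma: continuity of kn}). So it suffices to show that each composite $f_i:G\to U_{G,k}(k)\to U_i(k)$ is continuous.

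Next, realize each $U_i$ concretely. Since $\OO(U_i)$ is a finitely generated Hopf subalgebra of $\OO(U_{G,k})$, it is generated by the matrix coefficients of a single finite-dimensional representation $W$ of $U_{G,k}$. Concretely, one takes a finite-dimensional subcomodule containing algebra generators. The quotient $U_i$ is then the image of $U_{G,k}\to \GL(W)$, and this image is a closed subgroup scheme $U_i\hookrightarrow \GL(W)$. The topology on $U_i(k)$ is the subspace topology from $\GL(W)\subset \End(W)\cong k^{m^2}$, using the independence of embedding from Lemma \ref{Lemma: continuity of kn}.

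By the Tannakian equivalence (Theorem \ref{Fact: Tannakian Formalism}), $W$ corresponds to an object $V\in \mathbf{URep}_{k,\cont}(G)$. By Lemma \ref{Lemma: the construction of continuous Malcev completion by Tannakian formalism}, the composite $G\to U_i(k)\hookrightarrow \GL(W)$ equals $\rho_V$, which is continuous by definition. Since $U_i(k)$ has the subspace topology, $f_i$ is continuous. Hence $f$ is continuous by the universal property of the inverse limit topology.

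The main obstacle is the middle step. It must justify that the topology on $U_i(k)$, defined via an arbitrary polynomial embedding, agrees with the one induced by the faithful representation $W$. This needs the fact that morphisms of affine $k$-varieties induce continuous maps on $k$-points, applied to closed immersions in both directions. I would invoke Lemma \ref{Lemma: continuity of kn} here, and if necessary also its proof, which shows that polynomial maps $k^n\to k^m$ are continuous.
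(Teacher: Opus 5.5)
Your proposal is correct and follows essentially the same route as the paper. Both arguments reduce to the finite-type quotients $U_i$ via the inverse-limit topology, pull back a faithful representation of $U_i$ to an object of $\mathbf{URep}_{k,\cont}(G)$ via the Tannakian equivalence, and conclude from the continuity of $\rho_V$ together with the fact that $U_i(k)\hookrightarrow \GL(W)$ is a topological embedding. The differences are cosmetic: you build the faithful representation from a finite-dimensional subcomodule of $\OO(U_i)$ rather than simply choosing one, and you justify the embedding property through independence of the topology from the choice of closed embedding, where the paper cites Proposition \ref{Prop: topology of a unipotent group}.
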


\begin{proof}
Write $U_{G,k}\cong \varprojlim_i U_i$, where each $U_i$ is a unipotent group over $k$. By the definition of the inverse-limit topology, it suffices to show that every composite $G\rightarrow U_i(k)$ is continuous. Choose a faithful finite-dimensional representation $U_i\hookrightarrow \GL(V_i)$. Composing with $U_{G,k}\rightarrow U_i$, we may regard $V_i$ as a representation of $U_{G,k}$. By the Tannakian equivalence (Theorem \ref{Fact: Tannakian Formalism}), the induced representation of $G$ on $V_i$ is continuous and unipotent. Hence, the composition $G\rightarrow U_i(k)\rightarrow \GL(V_i)$ is continuous. Since $U_i(k)\hookrightarrow \GL(V_i)$ is a closed topological embedding by Proposition \ref{Prop: topology of a unipotent group}, the map $G\rightarrow U_i(k)$ is continuous.
\end{proof}

\begin{theorem}\label{Thm: equivalence between Malcev completion and Tannakian formalism of unipotent representations}
Let $G$ be a topological group, and let $k$ be a Hausdorff topological field of characteristic zero. Then the continuous homomorphism $G\rightarrow U_{G,k}(k)$ constructed from the neutral Tannakian category $\mathbf{URep}_{k,\cont}(G)$ exhibits $U_{G,k}$ as the continuous Mal'cev $k$-completion of $G$.
\end{theorem}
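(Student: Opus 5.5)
We must show that for every pro-unipotent group $U$ over $k$, composition with the continuous homomorphism $f:G\rightarrow U_{G,k}(k)$ gives a bijection
\[
\homo(U_{G,k},U)\longrightarrow \homo_{\cont}(G,U(k)).
\]
Writing $U\cong\varprojlim_j U_j$ with $U_j$ unipotent, both sides turn inverse limits in $U$ into inverse limits of sets. On the right this uses that $U(k)$ carries the inverse-limit topology. Hence it suffices to treat a unipotent group $U$ of finite type over $k$. Well-definedness is clear: a morphism $\phi:U_{G,k}\rightarrow U$ induces a map on $k$-points that is continuous, because it is given by polynomials on each finite level, so $\phi(k)\circ f$ is continuous.

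\textbf{Surjectivity.} Let $h:G\rightarrow U(k)$ be continuous. Choose a faithful representation $U\hookrightarrow \GL(V)$. Since $U$ is unipotent, we may choose it so that $U$ lands in upper-unitriangular matrices; this follows from Definition \ref{Definition: definition of unipotent groups}, i.e.\ Kolchin-type flag existence. Then $V$, with $G$ acting through $h$, is a continuous unipotent representation of $G$, because $U(k)\hookrightarrow \GL(V)$ is continuous by Proposition \ref{Prop: topology of a unipotent group}. Consider the full subcategory $\langle V\rangle\subset \mathbf{Rep}_k(U)$ generated by $V$ under subquotients, sums, tensors and duals. Every object of it is a representation of $U$, and its pullback along $h$ is continuous and unipotent. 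So pulling back along $h$ gives an exact faithful tensor functor $\mathbf{Rep}_k(U)\rightarrow \mathbf{URep}_{k,\cont}(G)$ that is compatible with the fibre functors. Tannakian duality (Theorem \ref{Fact: Tannakian Formalism}) turns this into a morphism $\phi:U_{G,k}\rightarrow U$. Concretely, a tensor automorphism of $\omega$ on $\mathbf{URep}_{k,\cont}(G)$ restricts to a tensor automorphism of the fibre functor on $\mathbf{Rep}_k(U)$, and such automorphisms are exactly the points of $U$. By Lemma \ref{Lemma: the construction of continuous Malcev completion by Tannakian formalism}, the action of $g$ on $V$ equals the action of $f(g)$. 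Since $V$ is faithful, this gives $\phi(f(g))=h(g)$.

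\textbf{Injectivity.} This is the step I expect to be the main obstacle. Suppose $\phi_1,\phi_2:U_{G,k}\rightarrow U$ satisfy $\phi_1\circ f=\phi_2\circ f$. Pull back the faithful representation $V$ of $U$ along each $\phi_i$ to get $V_1$ and $V_2$ in $\mathbf{Rep}_k(U_{G,k})$. Their restrictions along $f$ are the same representation of $G$. Under the Tannakian equivalence, a representation of $U_{G,k}$ is determined by the $G$-action obtained by restriction along $f$. Indeed, the equivalence $\mathbf{URep}_{k,\cont}(G)\simeq \mathbf{Rep}_k(U_{G,k})$ recovers the $G$-action by restriction along $f$, so the identity map is an isomorphism $V_1\cong V_2$ compatible with $\omega$. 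Thus $V_1$ and $V_2$ coincide as representations, i.e.\ the maps $U_{G,k}\rightarrow \GL(V)$ agree. Since $V$ is faithful, $\phi_1=\phi_2$.

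An alternative route for injectivity is to show that $f(G)$ is Zariski dense in $U_{G,k}$: a function in $\mathcal{O}(U_{G,k})$ is a matrix coefficient of some representation, and if it vanishes on $f(G)$ it vanishes identically. This is the same argument in coordinate form. The care needed is only in stating that morphisms in the Tannakian category are exactly the $G$-equivariant maps, so that $\mathcal{O}(U_{G,k})$ embeds into functions on $G$.

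Finally, naturality in $U$ is clear. Therefore $U_{G,k}$ represents $U\mapsto\homo_{\cont}(G,U(k))$, and by Definition \ref{Fact: definition of continuous Malcev completion} it is the continuous Mal'cev $k$-completion of $G$.
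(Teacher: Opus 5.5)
Your proposal is correct and takes essentially the same route as the paper: restriction along $h$ gives a tensor functor $\mathbf{Rep}_k(U)\rightarrow \mathbf{URep}_{k,\cont}(G)\simeq \mathbf{Rep}_k(U_{G,k})$, Tannakian duality turns it into a homomorphism $U_{G,k}\rightarrow U$, and Lemma \ref{Lemma: the construction of continuous Malcev completion by Tannakian formalism} shows that $h$ factors through $f$. Your reduction to finite-type $U$, the check via a faithful unitriangular representation, and the explicit uniqueness argument (full faithfulness of restriction along $f$, or Zariski density of $f(G)$) spell out details that the paper compresses into the phrase ``a unique homomorphism''.
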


\begin{proof}
We verify the universal property in Definition \ref{Fact: definition of continuous Malcev completion}. Let $U$ be a pro-unipotent group over $k$, and let $h:G\rightarrow U(k)$ be a continuous homomorphism. Restriction of representations along $h$ defines a $k$-linear exact tensor functor $h^*:\mathbf{Rep}_k(U)\rightarrow \mathbf{URep}_{k,\cont}(G)$. Using the Tannakian equivalence (Theorem \ref{Fact: Tannakian Formalism}), we identify $\mathbf{URep}_{k,\cont}(G)$ with $\mathbf{Rep}_k(U_{G,k})$ and the functor $h^*$ is induced by a  unique homomorphism $\rho_{U_{G,k}}:U_{G,k}\rightarrow U$ of affine group schemes. By Lemma \ref{Lemma: the construction of continuous Malcev completion by Tannakian formalism}, $h$ factors as $G\rightarrow U_{G,k}(k)\xrightarrow{\rho_{U_{G,k}}(k)} U(k)$.
\end{proof}

\section{Hopf-Algebraic Construction}

In this section, we give a Hopf-algebraic construction for the continuous Mal'cev $\QQ_p$-completion of topologically finitely generated pro-$p$ groups.

\subsection{$\QQ_p$-Iwasawa Algebra of a Pro-$p$ Group}\,

In this part, we construct the $\QQ_p$-Iwasawa Algebra $\QQ_p[[G]]^{\wedge}_I$ for a topologically finitely generated pro-$p$ group (Proposition \ref{Prop:I-adic group algebra over Q_p}).

\begin{definition}[\cite{Nikolov-Segal-profinite-groups}*{p.~2}\cite{Ribes-Zallesskii-profinite-groups}*{p.~120}]\label{Def: strongly complete profinite groups}
A profinite group $G$ is \textbf{strongly complete} if one of the following equivalent conditions holds.
\begin{enumerate}[leftmargin=0.25in]
    \item Every subgroup of finite index in $G$ is open.
    \item The canonical map from $G$ to the profinite completion of its underlying abstract group is an isomorphism of topological groups.
    \item Every group homomorphism from $G$ to a profinite group is continuous.
\end{enumerate}
\end{definition}

\begin{theorem}[\cite{Nikolov-Segal-profinite-groups}*{Theorem 1.1, Theorem 1.4}]\label{Fact: finitely generated profinite groups are good}
Let $G$ be a topologically finitely generated profinite group. Then 
\begin{enumerate}[leftmargin=0.25in]
    \item $G$ is strongly complete;
    \item every term $\Gamma^r G$ of the lower central series is closed.
\end{enumerate}
\end{theorem}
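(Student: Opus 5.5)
This is the Nikolov--Segal theorem, which the paper cites rather than proves. Any proof proposal is therefore a sketch of their strategy.

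Part (2) is reduced to a uniform bound on commutator width; that bound is the heart of Nikolov--Segal and underlies part (1) as well. Since $\Gamma^r G=[G,\Gamma^{r-1}G]$, I will argue by induction on $r$. Assume $\Gamma^{r-1}G$ is closed, and choose topological generators $x_1,\dots,x_d$ of $G$. The key claim is that there is an integer $N=N(d)$ with the following property: every element of $[G,H]$, where $H=\Gamma^{r-1}G$ is closed and normal, is a product of at most $N$ elements of the form $[x_i,h]^{\pm1}$ with $h\in H$, or of a bounded number of commutators $[g,h]$.

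Granting this, the set
\[
X_N=\{[x_{i_1},h_1]^{\pm1}\cdots[x_{i_N},h_N]^{\pm1}: h_j\in H\}
\]
is the image of a compact space under a continuous map, so it is compact and hence closed. The claim says that $X_N$, together with the finitely many index and sign choices, equals $[G,H]$. Therefore $[G,H]$ is closed.

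For the pro-$p$ case relevant to this paper, the bounded-width statement is much easier. When $G$ is a finitely generated pro-$p$ group, the abstract subgroup $\Phi(G)=G^p[G,G]$ has finite index, and $[G,H]$ can be handled through the Frattini argument. One shows that $[G,H]=\prod_{i}[x_i,H]$ modulo $[G,[G,H]]$-type corrections. The identity $[x_i,h h']=[x_i,h']\,[x_i,h]^{h'}$ lets one absorb these corrections by compactness and an inverse-limit argument over finite quotients $G/U$: in each finite quotient the product map $H^d\to[G,H]U/U$ is surjective onto a set closed under the relevant operations, and surjectivity passes to the limit because preimages form a nonempty compact inverse system. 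I expect this step to be the main obstacle, namely showing that a \emph{fixed} finite product $\prod_i [x_i,H]$ already fills $[G,H]$ modulo each open $U$. For general profinite $G$ this is the deep part of Nikolov--Segal, and there I would simply cite them. For pro-$p$ groups I would first attempt it via the nilpotency of the finite $p$-quotients $G/U$, proceeding by induction along the lower central series of $G/U$.

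Part (1) follows once the width bounds are applied to the verbal subgroups $G^n=\langle g^n\rangle$. Let $K$ be a subgroup of finite index. It contains a normal subgroup of finite index $m$, and hence contains $G^{m'}$ for a suitable $m'$. A uniform bound on the width of the power words gives that $G^{m'}$ is closed, hence open since its index is finite. Therefore $K$ is open. In the pro-$p$ case, the width of $g^{p^k}$-words follows from the same compactness argument, using the fact that $G^{p}[G,G]$ is open and iterating.
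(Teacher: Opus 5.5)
The paper only cites Nikolov--Segal for this theorem, so relying on them for the finite-group width bounds matches what the paper does. Your reduction of (2) to such a bound in the finite quotients, plus compactness of $X_N$, is also correct.

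The genuine gap is in your deduction of (1). From $K\supseteq N\supseteq G^{m}$ and the closedness of $G^{m}$, you conclude that $G^{m}$ is open ``since its index is finite''. Nothing you wrote shows $|G:G^{m}|<\infty$. That claim says the topologically finitely generated profinite group $G/G^{m}$ of exponent $m$ is finite. This is Zelmanov's solution of the restricted Burnside problem, and it is not a formal consequence of any width bound. You have two ways to repair it:
\begin{enumerate}
\item invoke Zelmanov explicitly; or
\item replace $x^{m}$ by a single word $w$ that is a law in every group of order at most $m$ and such that every $d$-generator group satisfying $w=1$ is finite. For instance, take a finite generating set of the finite-index fully invariant subgroup $\bigcap\{M\trianglelefteq F_d: |F_d:M|\le m\}$ and combine it into one word using disjoint variables. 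Then $w(G)\subseteq N$, and $G/\overline{w(G)}$ is automatically finite. The Nikolov--Segal width bound for such $w$ makes $w(G)$ closed, hence open, so $N$ is open.
\end{enumerate}

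Your pro-$p$ variant of (1) has the same defect: for $G$ free pro-$p$, finiteness of $G/\overline{G^{p^k}}$ is exactly the restricted Burnside problem for exponent $p^k$. There is, however, an elementary route due to Serre that avoids power words:
\begin{enumerate}
\item[(a)] Every abstract finite quotient $G/N$ is a $p$-group. Indeed, $N\cap\overline{\langle g\rangle}$ has finite index $n$ in the procyclic pro-$p$ group $\overline{\langle g\rangle}$, hence contains $g^{p^{v}}$ with $v=v_p(n)$.
\item[(b)] Consequently $N$ contains the iterate $\Phi^k(G)$ of the abstract Frattini subgroup $\Phi(H)=H^p[H,H]$, where $k$ is the Frattini length of $G/N$.
\item[(c)] $\Phi(G)$ is open. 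This is because $[G,G]$ is closed, and the image of $G^p$ in the finitely generated $\widehat{\ZZ}_p$-module $G/[G,G]$ is $p\cdot(G/[G,G])$, which is closed of finite index.
\item[(d)] Open subgroups of finitely generated pro-$p$ groups are finitely generated, so (c) iterates and each $\Phi^k(G)$ is open.
\end{enumerate}
Step (a) is missing from your sketch, and it is what lets the Frattini iteration actually reach $N$.

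Finally, the step you flag as the main obstacle in the pro-$p$ case of (2) is a standard lemma. If $P=\langle x_1,\dots,x_d\rangle$ is nilpotent and $H\trianglelefteq P$, then $[H,P]=[H,x_1]\cdots[H,x_d]$. To see this, set $K_n=[H,{}_nP]$.
\begin{itemize}
\item The maps $k\mapsto[k,x_i]$ induce homomorphisms $K_{n-1}\to K_n/K_{n+1}$ whose central images together generate $K_n/K_{n+1}$.
\item By the three subgroup lemma, $[K_1,K_{n-1}]\le K_{n+1}$.
\item Hence a correction term $\prod_i[k_i,x_i]\in K_n$ can be absorbed by replacing $h_i$ with $h_ik_i$; induct on $n$.
\end{itemize}
Applied to each $G/U$, this proves (2) for finitely generated pro-$p$ groups, with exactly $d$ factors $[h_i,x_i]$. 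It also supplies the closedness of $[G,G]$ used in step (c).
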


\begin{definition}\label{Def: Iwasawa algebra}
The \textbf{Iwasawa algebra} of a pro-$p$ group $G$ is $\widehat{\ZZ}_p[[G]]:=\varprojlim_H\widehat{\ZZ}_p[G/H]$, where $H$ ranges over the open normal subgroups of $G$.
\end{definition}

Equipped with the inverse-limit topology, $\widehat{\ZZ}_p[[G]]$ is a compact Hausdorff totally disconnected topological ring. The quotient maps $G\rightarrow G/H$ gives the following canonical embedding.
\begin{proposition}\label{Prop: canonical map from G to Iwasawa algebra}
Let $G$ be a pro-$p$ group. For each open normal subgroup $H\subset G$, the 
canonical map $G/H\rightarrow \widehat{\ZZ}_p[G/H]^{\times}$ is compatible with passage to quotients and therefore induces a continuous group homomorphism $G\rightarrow \widehat{\ZZ}_p[[G]]^{\times}$.   
\end{proposition}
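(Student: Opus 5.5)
For each open normal subgroup $H\subset G$, I will define $\iota_H:G/H\rightarrow \widehat{\ZZ}_p[G/H]^{\times}$ by sending a coset $gH$ to the basis element $[gH]$ of the group ring. This is a group homomorphism, because multiplication in $\widehat{\ZZ}_p[G/H]$ restricts to the group law on basis elements. Each image is a unit, with inverse $[g^{-1}H]$.

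\emph{Step 1: compatibility.} Let $H'\subset H$ be open normal subgroups. The transition map $\pi_{H',H}:\widehat{\ZZ}_p[G/H']\rightarrow \widehat{\ZZ}_p[G/H]$ is the $\widehat{\ZZ}_p$-algebra map induced by the projection $G/H'\rightarrow G/H$. Hence $\pi_{H',H}([gH'])=[gH]$, i.e.\ $\pi_{H',H}\circ\iota_{H'}=\iota_H\circ(G/H'\rightarrow G/H)$. Ring maps send units to units, so the groups $\widehat{\ZZ}_p[G/H]^{\times}$ form an inverse system. Its limit is $\widehat{\ZZ}_p[[G]]^{\times}$: an element of the inverse limit ring is a unit if and only if every component is a unit, because the inverses are unique and therefore automatically compatible.

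\emph{Step 2: the homomorphism.} The composites $G\rightarrow G/H\rightarrow \widehat{\ZZ}_p[[G/H]]^{\times}$ form a compatible family of homomorphisms. By the universal property of the inverse limit, they induce a group homomorphism $\iota:G\rightarrow \varprojlim_H \widehat{\ZZ}_p[G/H]^{\times}=\widehat{\ZZ}_p[[G]]^{\times}$.

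\emph{Step 3: continuity.} This is the only point that needs care, mainly in fixing which topology is meant on the units. I take the subspace topology from $\widehat{\ZZ}_p[[G]]$. Since this ring carries the inverse-limit topology, it suffices to check that each component $G\rightarrow \widehat{\ZZ}_p[G/H]$ is continuous. This composite factors through the discrete finite group $G/H$, and $G\rightarrow G/H$ is continuous because $H$ is open. Any map out of a discrete space is continuous, so each component is continuous, and $\iota$ is continuous.

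If the topology on units is instead taken to be the one making inversion continuous (the embedding $u\mapsto(u,u^{-1})$), I will apply the same argument to $g\mapsto \iota(g^{-1})$, which is continuous by the same reasoning. So continuity holds in either convention.
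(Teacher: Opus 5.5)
Your proposal is correct and takes the same route as the paper, which gives no separate proof and simply passes the compatible maps $G/H\rightarrow \widehat{\ZZ}_p[G/H]^{\times}$ to the inverse limit. Your checks that the units of the limit ring are the limit of the unit groups, and that continuity reduces to the finite-level components through the discrete quotients $G/H$, supply the details the paper leaves implicit. One harmless slip: $\widehat{\ZZ}_p[[G/H]]$ in Step 2 should read $\widehat{\ZZ}_p[G/H]$, though the two agree since $G/H$ is finite.
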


\begin{example}[\cite{Lazard-Iwasawa-algebra},\cite{Serre-Galois-cohomology}*{Section I.1.5, Proposition 6 and Proposition 7}]\label{Example: Magnus algbra}
Let $F_d$ be the free group on generators $\{x_1,...,x_d\}$. Let $\widehat{F}_d$ be the pro-$p$ completion of $F_d$. Let $\widehat{\ZZ}_p\langle\langle t_1,...,t_d\rangle\rangle$ denote the Magnus algebra of noncommutative formal power series in $t_1,...,t_d$. Then the assignment $x_i\rightarrow 1+t_i$ extends uniquely to an isomorphism of $\widehat{\ZZ}_p$-algebras $s:\widehat{\ZZ}_p[[\widehat{F}_d]]\rightarrow \widehat{\ZZ}_p\langle\langle t_1,...,t_d\rangle\rangle$. \qed
\end{example}

Define $\widehat{\ZZ}_p[[G]]\widehat{\otimes}_{\widehat{\ZZ}_p} \widehat{\ZZ}_p[[G]]:=\varprojlim_{\text{open }H\triangleleft G} (\widehat{\ZZ}_p[G/H]\otimes_{\widehat{\ZZ}_p}\widehat{\ZZ}_p[G/H])$. There is then a canonical map $j:\widehat{\ZZ}_p[[G]]\otimes_{\widehat{\ZZ}_p}\widehat{\ZZ}_p[[G]]\rightarrow \widehat{\ZZ}_p[[G]]\widehat{\otimes}_{\widehat{\ZZ}_p}\widehat{\ZZ}_p[[G]]$. For any $x,y\in\widehat{\ZZ}_p[[G]]$, write $x\widehat{\otimes}y:=j(x\otimes y)$.

For open normal subgroups $H_1\subset H_2$, the quotient map $G/H_1\twoheadrightarrow G/H_2$ is compatible with the usual augmentations of the corresponding group rings. Passing to the inverse limit gives a continuous augmentation $\epsilon:\widehat{\ZZ}_p[[G]]\rightarrow \widehat{\ZZ}_p$.
Similarly, the coproducts and antipodes on the finite group rings are compatible with the transition maps and induce maps $\Delta:\widehat{\ZZ}_p[[G]]\rightarrow \widehat{\ZZ}_p[[G]]\widehat{\otimes}_{\widehat{\ZZ}_p} \widehat{\ZZ}_p[[G]]$ and  $S:\widehat{\ZZ}_p[[G]]\rightarrow \widehat{\ZZ}_p[[G]]$. Thus, we obtain the following.

\begin{proposition}\label{Prop: Iwasawa algebra is a topological Hopf algebra}
Let $G$ be a pro-$p$ group. Then $\widehat{\ZZ}_p[[G]]$ is a topological Hopf algebra, and for every $x\in G$, $\Delta(x)=x\widehat{\otimes} x$, $\epsilon(x)=1$ and $S(x)=x^{-1}$.
\end{proposition}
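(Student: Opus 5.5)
The plan is to reduce every claim to the finite group rings $\widehat{\ZZ}_p[G/H]$, where the Hopf algebra structure is classical, and then pass to the inverse limit over open normal subgroups $H\triangleleft G$. For each such $H$, the group ring $\widehat{\ZZ}_p[G/H]$ is a Hopf algebra over $\widehat{\ZZ}_p$. Its coproduct, counit and antipode are determined on the basis $G/H$ by $\Delta_H(\bar g)=\bar g\otimes\bar g$, $\epsilon_H(\bar g)=1$ and $S_H(\bar g)=\bar g^{-1}$. For $H_1\subset H_2$, the transition map $\pi:\widehat{\ZZ}_p[G/H_1]\to\widehat{\ZZ}_p[G/H_2]$ is induced by a group homomorphism. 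It is therefore a morphism of Hopf algebras: it satisfies $(\pi\otimes\pi)\circ\Delta_{H_1}=\Delta_{H_2}\circ\pi$, $\epsilon_{H_2}\circ\pi=\epsilon_{H_1}$ and $\pi\circ S_{H_1}=S_{H_2}\circ\pi$. It suffices to check these identities on basis elements, where they are immediate.

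Passing to the limit, we obtain continuous maps $\Delta$, $\epsilon$, $S$ on $\widehat{\ZZ}_p[[G]]$, with $\Delta$ landing in the completed tensor product $\widehat{\ZZ}_p[[G]]\widehat{\otimes}_{\widehat{\ZZ}_p}\widehat{\ZZ}_p[[G]]$. Each is continuous because its composite to every finite level factors through the continuous projection to $\widehat{\ZZ}_p[G/H]$. The finite levels are finitely generated free $\widehat{\ZZ}_p$-modules with the $p$-adic topology, so all the maps there are continuous.

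Next we verify the Hopf axioms. These are coassociativity, the counit laws, the antipode identity $m\circ(S\widehat{\otimes}\mathrm{id})\circ\Delta=\eta\circ\epsilon$, and multiplicativity of $\Delta$ and $\epsilon$. For this we need the completed multiplication $m:\widehat{\ZZ}_p[[G]]\widehat{\otimes}\widehat{\ZZ}_p[[G]]\to\widehat{\ZZ}_p[[G]]$, defined as the inverse limit of the finite multiplications, and the triple completed tensor product, defined analogously. Each axiom is an equality of two continuous maps into an inverse limit. It therefore suffices to check it after projecting to level $H$, where it becomes the corresponding axiom for $\widehat{\ZZ}_p[G/H]$. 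This step requires that the structure maps commute with the projections; that compatibility holds by construction.

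Finally, the formulas for $x\in G$ are obtained by evaluating at each level. By Proposition \ref{Prop: canonical map from G to Iwasawa algebra}, $x$ corresponds to the compatible family $(\bar x)_H$. We have $\Delta_H(\bar x)=\bar x\otimes\bar x$, which is the image of $x\widehat{\otimes}x=j(x\otimes x)$ at level $H$. Similarly $\epsilon_H(\bar x)=1$, and $S_H(\bar x)=\overline{x^{-1}}$, which is the image of $x^{-1}$. The main point needing care is bookkeeping rather than mathematics. The completed tensor product is defined as a limit over a single index $H$ applied to both factors, rather than over pairs $(H,H')$. I would note that the diagonal family $\{(H,H)\}$ is cofinal in the family of all pairs, so the two definitions agree and the maps $m$, $\Delta\widehat{\otimes}\mathrm{id}$, etc. are well defined.
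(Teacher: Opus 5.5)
Your proposal is correct and follows essentially the same route as the paper. The paper also obtains $\epsilon$, $\Delta$ and $S$ by observing that the structure maps of the finite group rings $\widehat{\ZZ}_p[G/H]$ are compatible with the transition maps and then passing to the inverse limit; you additionally write out the levelwise checks of the axioms, the continuity argument, the formulas on $G$, and the cofinality of the diagonal pairs $(H,H)$, all of which the paper leaves implicit.
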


\begin{proposition}
Let $G$ be a pro-$p$ group. Then the canonical map $G\rightarrow \widehat{\ZZ}_p[[G]]^{\times}$ identifies $G$ homeomorphically with the space $\mathcal{G}(\widehat{\ZZ}_p[[G]])$ of grouplike elements.
\end{proposition}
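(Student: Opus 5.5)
The plan is to reduce to the finite quotients $G/H$ and pass to the inverse limit. First observe that $\widehat{\ZZ}_p[[G]]\widehat{\otimes}_{\widehat{\ZZ}_p}\widehat{\ZZ}_p[[G]]=\varprojlim_H \widehat{\ZZ}_p[G/H\times G/H]$, since $\widehat{\ZZ}_p[G/H]\otimes\widehat{\ZZ}_p[G/H]\cong\widehat{\ZZ}_p[G/H\times G/H]$ is free on the basis $\{\bar g\otimes\bar h\}$. An element $x=(x_H)_H\in\widehat{\ZZ}_p[[G]]$ is grouplike, i.e.\ $\Delta(x)=x\widehat{\otimes}x$ and $\epsilon(x)=1$, if and only if each component $x_H$ is grouplike in the finite group ring $\widehat{\ZZ}_p[G/H]$. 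This holds because $\Delta$, $\epsilon$ and $j$ are defined levelwise and an element of an inverse limit is determined by its components.

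The key algebraic step is the classical fact that, for a finite group $Q$ and an integral domain $R$ (here $R=\widehat{\ZZ}_p$), the grouplike elements of $R[Q]$ are exactly the elements of $Q$. To see this, write $y=\sum_{q}a_q q$. Comparing coefficients of $q\otimes q'$ in $\Delta(y)=y\otimes y$ gives $a_qa_{q'}=\delta_{q,q'}a_q$. Hence each $a_q$ is an idempotent of the domain $R$, so $a_q\in\{0,1\}$, and at most one $a_q$ is nonzero. The condition $\epsilon(y)=\sum a_q=1$ then forces $y=q$ for a unique $q\in Q$. Consequently $\mathcal{G}(\widehat{\ZZ}_p[[G]])=\varprojlim_H \mathcal{G}(\widehat{\ZZ}_p[G/H])=\varprojlim_H G/H$, and the bijection is compatible with the canonical map of Proposition \ref{Prop: canonical map from G to Iwasawa algebra}. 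Since $G$ is profinite, $G\to\varprojlim_H G/H$ is a bijection (indeed a homeomorphism). It follows that the canonical map $G\to\widehat{\ZZ}_p[[G]]^{\times}$ is injective with image exactly $\mathcal{G}(\widehat{\ZZ}_p[[G]])$, using Proposition \ref{Prop: Iwasawa algebra is a topological Hopf algebra} for the inclusion of $G$ among the grouplikes.

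For the topological statement, the map $G\to\mathcal{G}(\widehat{\ZZ}_p[[G]])$ is a continuous bijection by Proposition \ref{Prop: canonical map from G to Iwasawa algebra}. Its source $G$ is compact, and its target carries the subspace topology of the Hausdorff ring $\widehat{\ZZ}_p[[G]]$. A continuous bijection from a compact space to a Hausdorff space is a homeomorphism. As an alternative argument, one can note that the subspace topology on $\varprojlim_H G/H\subset\varprojlim_H\widehat{\ZZ}_p[G/H]$ is the inverse-limit topology, because each $G/H$ sits discretely inside $\widehat{\ZZ}_p[G/H]$.

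I expect the only delicate point to be the first step: justifying that the grouplike condition in the completed tensor product can be checked componentwise. This needs some care because $\Delta(x)$ and $x\widehat{\otimes}x$ live in the completion rather than in the algebraic tensor product. I would handle it by noting that $j(x\otimes x)$ has $H$-component $x_H\otimes x_H$, which follows directly from the definition of $j$ as the map induced by the projections.
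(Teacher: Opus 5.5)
Your proposal is correct and follows essentially the same route as the paper: you check the grouplike condition on each finite quotient $\widehat{\ZZ}_p[G/H]$, where the grouplikes are exactly $G/H$, identify $\mathcal{G}(\widehat{\ZZ}_p[[G]])$ with $\varprojlim_H G/H = G$, and conclude with the compact-to-Hausdorff argument. The only difference is that you make explicit two points the paper leaves implicit: the coefficient computation $a_q a_{q'}=\delta_{q,q'}a_q$ at the finite level, and the fact that $j(x\otimes x)$ has $H$-component $x_H\otimes x_H$, which is what justifies checking the grouplike condition componentwise.
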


\begin{proof}
Since $G$ is compact and $\widehat{\ZZ}_p[[G]]$ is Hausdorff, it suffices to show that the map $\phi:G\rightarrow \mathcal{G}(\widehat{\ZZ}_p[[G]])$ induced by Proposition \ref{Prop: Iwasawa algebra is a topological Hopf algebra} is bijective. For every $x\in \mathcal{G}(\widehat{\ZZ}_p[[G]])$ and every open normal subgroup $H\subset G$, the image $x_H\in \widehat{\ZZ}_p[G/H]$ is group-like. Since the group-like elements of $\widehat{\ZZ}_p[G/H]$ are precisely $G/H$, the compatible family $(x_H)_H$ therefore defines an element of $\varprojlim_HG/H=G$. Thus, $\phi$ is bijective.
\end{proof}

Let $I=\Ker(\epsilon:\widehat{\ZZ}_p[[G]]\rightarrow \widehat{\ZZ}_p)$ be the augmentation ideal of $\widehat{\ZZ}_p[[G]]$, and, for each open normal subgroup $H\subset G$, let $I_{G/H}=\Ker(\epsilon_H:\widehat{\ZZ}_p[G/H]\rightarrow \widehat{\ZZ}_p)$. Write $\overline{I^n}$ for the closure of the algebraic ideal $I^n$ in $\widehat{\ZZ}_p[[G]]$.

\begin{proposition}\label{Prop: computation of power of augmentation ideal}
As closed ideals,
$\overline{I^n}=\varprojlim_{\text{open } H\triangleleft G} (I_{G/H})^n$.
\end{proposition}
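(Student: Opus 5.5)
We identify $\varprojlim_H (I_{G/H})^n$ with the closed subset $\{x\in \widehat{\ZZ}_p[[G]] : \pi_H(x)\in (I_{G/H})^n \text{ for all open normal } H\}$, where $\pi_H:\widehat{\ZZ}_p[[G]]\rightarrow \widehat{\ZZ}_p[G/H]$ is the projection. This set is an inverse limit of compatible ideals: the transition maps send $(I_{G/H_1})^n$ onto $(I_{G/H_2})^n$, since $I_{G/H_1}\rightarrow I_{G/H_2}$ is surjective. Each $(I_{G/H})^n$ is a finitely generated $\widehat{\ZZ}_p$-submodule of the finite free module $\widehat{\ZZ}_p[G/H]$, hence compact and closed. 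So the limit is a closed ideal $J_n$ of $\widehat{\ZZ}_p[[G]]$, and the claim is $\overline{I^n}=J_n$. The proof is the two inclusions.

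For $\overline{I^n}\subset J_n$: the augmentations are compatible, so $\pi_H(I)\subset I_{G/H}$, and hence $\pi_H(I^n)\subset (I_{G/H})^n$. Thus $I^n\subset J_n$. Since $J_n$ is closed, $\overline{I^n}\subset J_n$.

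For $J_n\subset \overline{I^n}$: the basic open neighbourhoods of $0$ are the kernels $K_H=\Ker \pi_H$ (together with $p^m\widehat{\ZZ}_p[[G]]$, which we handle by shrinking $H$ and using compactness). It therefore suffices to show that for $x\in J_n$ and every open normal $H$ we have $x\in I^n+K_H$, that is, $\pi_H(x)\in \pi_H(I^n)$.
\begin{itemize}[leftmargin=0.25in]
\item First, $\pi_H(I)=I_{G/H}$. Indeed, $I_{G/H}$ is generated as a $\widehat{\ZZ}_p$-module by the elements $\bar g-1$, and $g-1\in I$ lifts $\bar g-1$ for any $g\in G$.
\item Since $\pi_H$ is a ring homomorphism, it follows that $\pi_H(I^n)=(I_{G/H})^n$, which contains $\pi_H(x)$.
\end{itemize}
Hence every neighbourhood $x+K_H$ meets $I^n$. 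Because the ideals $K_H$ form a neighbourhood basis of $0$ in the inverse-limit topology, $x\in\overline{I^n}$.

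The one point needing care, and the expected main obstacle, is the topology. The $K_H$ do form a basis: each $\widehat{\ZZ}_p[G/H]$ carries the $p$-adic topology, so a basis of $0$ is given by $\pi_H^{-1}(p^m\widehat{\ZZ}_p[G/H])$, not by $K_H$ alone. The fix is to run the argument at the finite level. Since $(I_{G/H})^n=\pi_H(I^n)$, we can pick $y\in I^n$ with $\pi_H(y)=\pi_H(x)$ exactly. Then $x-y\in K_H\subset \pi_H^{-1}(p^m\widehat{\ZZ}_p[G/H])$ for every $m$, so this approximation lies in every basic neighbourhood of $x$ indexed by $H$. Running over all $H$ shows $x\in \overline{I^n}$. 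No finite-generation hypothesis on $G$ is needed.
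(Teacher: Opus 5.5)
Your proof is correct and follows the paper's argument. The forward inclusion uses $\pi_H(I^n)\subset (I_{G/H})^n$ together with closedness of the limit. The reverse inclusion lifts exactly, choosing $y\in I^n$ with $\pi_H(y)=\pi_H(x)$, so that $x-y\in\Ker\pi_H\subset \Ker\pi_H+p^m\widehat{\ZZ}_p[[G]]$ for every $m$. One correction: drop your intermediate claim that the kernels $\Ker\pi_H$ themselves form a neighbourhood basis of $0$, since they are not open; your final paragraph, which uses only the containment in the basic neighbourhoods, is the correct version.
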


\begin{proof}
Let $\pi_H:\widehat{\ZZ}_p[[G]]\twoheadrightarrow \widehat{\ZZ}_p[G/H]$ be the canonical projection. Since $\pi_H(I)=I_{G/H}$, we have $\pi_{H}(I^n)=(I_{G/H})^n$. Thus, $I^n\subset \varprojlim_H (I_{G/H})^n$. The right-hand side is closed, so it also contains $\overline{I^n}$. Conversely, let $x=(x_H)_H$ be an element of $\varprojlim_H (I_{G/H})^n$.  The subsets $U_{H,m}=\pi^{-1}_H(p^m\cdot \widehat{\ZZ}_p[G/H])=\Ker(\pi_H)+p^m\cdot \widehat{\ZZ}_p[[G]]$ form a fundamental system of neighborhoods of $0$ in $\widehat{\ZZ}_p[[G]]$. It suffices to show that every neighborhood $x+U_{H,n}$ meets $I^n$. Because $x_H\in \pi_H(I^n)=(I_{G/H})^n$, choose $y_{H,m}\in I^n$ with $\pi_H(y_{H,m})=x_H$. Then $y_{H,m}-x\in \Ker(\pi_H)\subset U_{H,m}$, so $x\in \overline{I^n}$.
\end{proof}

\begin{lemma}\label{Lem: generators of augmentation ideal}
The augmentation ideal $I$ is the closed two-sided ideal generated by $\{g-1:g\in G\}$.
\end{lemma}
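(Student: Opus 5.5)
Let $J$ denote the closed two-sided ideal of $\widehat{\ZZ}_p[[G]]$ generated by $\{g-1:g\in G\}$. The plan is to prove the two inclusions $J\subset I$ and $I\subset J$ separately, using the inverse-limit description $\widehat{\ZZ}_p[[G]]=\varprojlim_H\widehat{\ZZ}_p[G/H]$ and the corresponding finite-level statement for group rings.

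The inclusion $J\subset I$ is immediate. Each $g-1$ lies in $I$, because $\epsilon(g)=1$ by Proposition \ref{Prop: Iwasawa algebra is a topological Hopf algebra}. Moreover, $I$ is a two-sided ideal and is closed, being the kernel of the continuous map $\epsilon$ into the Hausdorff ring $\widehat{\ZZ}_p$. Hence $I$ contains the closed ideal $J$.

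For the inclusion $I\subset J$, I would argue by density in the same style as the proof of Proposition \ref{Prop: computation of power of augmentation ideal}. Let $\pi_H:\widehat{\ZZ}_p[[G]]\to\widehat{\ZZ}_p[G/H]$ be the projections. The sets $\Ker(\pi_H)$, for $H$ open normal, form a fundamental system of neighborhoods of $0$. Indeed, each $\widehat{\ZZ}_p[G/H]$ carries the $p$-adic topology, so $\Ker(\pi_H)+p^m\widehat{\ZZ}_p[[G]]$ is basic; since $\Ker(\pi_H)$ is contained in it, the smaller sets $\Ker(\pi_H)$ suffice to test closure. I only need that every basic neighborhood contains some $\Ker(\pi_H)$, or I can simply work with the $U_{H,m}$ directly.

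Now take $x\in I$ and fix $H$. Then $\pi_H(x)\in I_{G/H}$. The classical fact for finite group rings gives that $I_{G/H}$ is the free $\widehat{\ZZ}_p$-module on $\{\bar g-1:\bar g\ne 1\}$, since $\sum a_{\bar g}\bar g$ with $\sum a_{\bar g}=0$ equals $\sum a_{\bar g}(\bar g-1)$. Choose lifts $g\in G$ of the $\bar g$ and set $y=\sum a_{\bar g}(g-1)$. This is a finite $\widehat{\ZZ}_p$-combination lying in the algebraic ideal generated by the $g-1$, and it satisfies $\pi_H(y)=\pi_H(x)$. Hence $x-y\in\Ker(\pi_H)$, so every neighborhood of $x$ meets $J$. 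Since $J$ is closed, $x\in J$.

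The only mildly delicate point is the topology: we must check that the neighborhoods $x+\Ker(\pi_H)$, or $x+U_{H,m}$, are cofinal. This is exactly the description already used in the proof of Proposition \ref{Prop: computation of power of augmentation ideal}, so I expect no real obstacle. The argument in fact shows that even the closed $\widehat{\ZZ}_p$-submodule spanned by the elements $g-1$ already equals $I$.
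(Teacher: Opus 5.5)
Your proof is correct and is essentially the paper's argument: $J\subset I$ because $I$ is a closed ideal containing every $g-1$, and for $I\subset J$ you lift a finite-level expression of $\pi_H(x)$ to some $y\in J$ with $x-y\in\Ker(\pi_H)\subset U_{H,m}$. The paper lifts a two-sided-ideal expression $\sum_i\pi_H(\alpha_i)(\pi_H(g_i)-1)\pi_H(\beta_i)$, whereas you lift the $\widehat{\ZZ}_p$-basis $\{\bar g-1\}$ of $I_{G/H}$, which gives your slightly stronger module-level statement. One small correction: the sets $\Ker(\pi_H)$ are not themselves neighborhoods of $0$, since the quotients $\widehat{\ZZ}_p[G/H]$ are not discrete; but, as you note yourself, the argument only uses the containment $\Ker(\pi_H)\subset U_{H,m}$.
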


\begin{proof}
Let $J$ denote the closed two-sided ideal generated by $\{g-1:g\in G\}$. Since $I$ is closed in $\widehat{\ZZ}_p[[G]]$ and $g-1\in I$ for any $g\in G$, $J\subset I$. To prove the reverse inclusion, it suffices to approximate every $x\in I$ modulo the neighborhoods $U_{H,m}$ introduced in Proposition \ref{Prop: computation of power of augmentation ideal}, it remains to prove that for any $x\in I$, any open normal subgroup $H$ of $G$. Since $\pi_H(x)\in I_{G/H}$, and $I_{G/H}$ is generated as a two-sided ideal by $\{\pi_H(g)-1:g\in G\}$,  we may write $\pi_H(x)$ as a finite sum $\sum_i \pi_H(\alpha_i) (\pi_H(g_i)-1)\pi_H(\beta_i)$ for suitable $\alpha_i,\beta_i\in \widehat{\ZZ}_p[[G]]$ and $g_i\in G$. Set $y:=\sum_i \alpha_i (g_i-1)\beta_i\in J$. Then $y-x\in \Ker(\pi_H)\subset U_{H,m}$. Hence $x\in J$, proving $I=J$.
\end{proof}

\begin{proposition}\label{Prop: the quotient spaces of filtrations of Iwasawa algebra has finite rank}
If $G$ be a topologically finitely generated pro-$p$ group, then
$\widehat{\ZZ}_p[[G]]/\overline{I^n}$ is a finitely generated $\widehat{\ZZ}_p$-module for every $n$. 
\end{proposition}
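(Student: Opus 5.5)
We argue by induction on $n$, using the filtration $\widehat{\ZZ}_p[[G]]\supset \overline{I}\supset \overline{I^2}\supset\cdots$. Here $\overline{I}=I$, since $I$ is closed. For $n=1$ we have $\widehat{\ZZ}_p[[G]]/I\cong\widehat{\ZZ}_p$ via $\epsilon$. The short exact sequences
\[
0\rightarrow \overline{I^{n}}/\overline{I^{n+1}}\rightarrow \widehat{\ZZ}_p[[G]]/\overline{I^{n+1}}\rightarrow \widehat{\ZZ}_p[[G]]/\overline{I^{n}}\rightarrow 0
\]
reduce the claim to showing that each graded piece $\overline{I^n}/\overline{I^{n+1}}$ is a finitely generated $\widehat{\ZZ}_p$-module. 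Here $\widehat{\ZZ}_p$ is Noetherian, so extensions of finitely generated modules are finitely generated.

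Fix topological generators $g_1,\dots,g_d$ of $G$. The key claim is that $\overline{I^n}/\overline{I^{n+1}}$ is spanned over $\widehat{\ZZ}_p$ by the images of the finitely many monomials $(g_{i_1}-1)\cdots(g_{i_n}-1)$. We first treat $n=1$: we show $I/\overline{I^2}$ is spanned by the classes of $g_i-1$. By Lemma \ref{Lem: generators of augmentation ideal}, $I$ is the closed two-sided ideal generated by the elements $g-1$ with $g\in G$. Modulo $\overline{I^2}$, any product $\alpha(g-1)\beta$ reduces to $\epsilon(\alpha)\epsilon(\beta)(g-1)$, because $\alpha-\epsilon(\alpha)\in I$ and $\beta-\epsilon(\beta)\in I$. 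The identities
\[
gh-1=(g-1)+(h-1)+(g-1)(h-1),\qquad g^{-1}-1=-g^{-1}(g-1)
\]
show that $g\mapsto [g-1]$ is a homomorphism from $G$ to $I/\overline{I^2}$. Hence every element $g$ of the dense abstract subgroup generated by $g_1,\dots,g_d$ has $[g-1]$ in the $\widehat{\ZZ}_p$-span $M$ of the $[g_i-1]$.

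For general $n$, the class of $I^n$ in $\overline{I^n}/\overline{I^{n+1}}$ is a quotient of the $n$-fold tensor power of $I/\overline{I^2}$: multiplication is bilinear, and any product with a factor in $\overline{I^2}$ lands in $\overline{I^{n+1}}$ by continuity of multiplication. So this image is spanned by the monomials above.

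The main obstacle is passing from dense subsets to closures, which happens twice. First, we must get $[g-1]\in M$ for every $g\in G$, not just for the dense abstract subgroup. Second, the image of $I^n$ must exhaust $\overline{I^n}/\overline{I^{n+1}}$. The plan for both is the same. The quotient $\widehat{\ZZ}_p[[G]]/\overline{I^{n+1}}$ carries the quotient topology, which is compact Hausdorff, since $\widehat{\ZZ}_p[[G]]$ is compact and $\overline{I^{n+1}}$ is closed. The image of a finitely generated $\widehat{\ZZ}_p$-submodule is the continuous image of $\widehat{\ZZ}_p^N$, which is compact. It is therefore closed in this Hausdorff quotient.

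With this in hand, the first gap closes. The map $g\mapsto g-1$ is continuous, and the preimage of the closed set $M$ contains a dense subgroup, so $[g-1]\in M$ for all $g\in G$. The second gap closes the same way. Every element of $I^n$ maps into the closed finitely generated submodule spanned by the monomials, so $\overline{I^n}$ does as well. This completes the induction step and proves Proposition \ref{Prop: the quotient spaces of filtrations of Iwasawa algebra has finite rank}.

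If one prefers, the same conclusion can be cross-checked through Proposition \ref{Prop: computation of power of augmentation ideal}. In each finite $p$-group quotient $G/H$, the monomials in the images of the $g_i$ generate $(I_{G/H})^n/(I_{G/H})^{n+1}$. The uniform bound by the number of monomials, $d^n$, then passes to the inverse limit, using compactness of $\widehat{\ZZ}_p$-modules to preserve surjectivity.
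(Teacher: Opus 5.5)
Your proof is correct, but it takes a different route from the paper.

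\textbf{The paper's route.} The paper never works inside $\widehat{\ZZ}_p[[G]]$ directly. It takes a continuous surjection $\widehat{F}_d\twoheadrightarrow G$ from a free pro-$p$ group and uses the Lazard--Serre isomorphism $\widehat{\ZZ}_p[[\widehat{F}_d]]\cong\widehat{\ZZ}_p\langle\langle t_1,\dots,t_d\rangle\rangle$ (Example \ref{Example: Magnus algbra}). It then pushes forward the augmentation filtration of the Magnus algebra, whose graded pieces are visibly free of rank $d^n$ on the monomials. Compactness is used once, to see that the image of the $n$-th filtration step is exactly $\overline{I^n}$.

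\textbf{Your route.} You stay intrinsic. You get the spanning set of $I/\overline{I^2}$ from Lemma \ref{Lem: generators of augmentation ideal} via the additive homomorphism $g\mapsto[g-1]$. You pass it to the higher graded pieces by multilinearity of multiplication modulo $\overline{I^{n+1}}$. Every limiting step is handled by one observation: a finitely generated $\widehat{\ZZ}_p$-submodule of the compact Hausdorff quotient $\widehat{\ZZ}_p[[G]]/\overline{I^{n+1}}$ is closed.

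That observation is actually needed a third time, which you should state explicitly. The lemma gives $I$ only as the \emph{closure} of the algebraic two-sided ideal generated by the $g-1$. You have shown that this algebraic ideal maps into $M$, and you still need closedness of $M$ to conclude that all of $I$ does. The argument applies verbatim.

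\textbf{Comparison.} The paper's argument is shorter and gives a functorial picture: $\overline{I^n}/\overline{I^{n+1}}$ is a quotient of a free module of rank $d^n$. However, it leans on the nontrivial Magnus-algebra isomorphism as external input. Yours avoids that entirely, using only the paper's lemma on generators of $I$ and elementary compactness. It yields the same explicit generating set of $d^n$ monomials $(g_{i_1}-1)\cdots(g_{i_n}-1)$.
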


\begin{proof}
It suffices to prove that each quotient $\overline{I^n}/\overline{I^{n+1}}$ is a finitely generated $\widehat{\ZZ}_p$-module.
Since $G$ is topologically finitely generated, there is a continuous surjective homomorphism $\widehat{F}_d\twoheadrightarrow G$, where $\widehat{F}_d$ is a finitely generated free pro-$p$ group. By Example \ref{Example: Magnus algbra}, the induced homomorphism $\rho:\widehat{\ZZ}_p\langle\langle t_1,...,t_d\rangle\rangle\cong \widehat{\ZZ}_p[[\widehat{F}_d]]\twoheadrightarrow \widehat{\ZZ}_p[[G]]$ is a  continuous surjection. Let $J$ be the augmentation ideal of $\widehat{\ZZ}_p\langle\langle t_1,...,t_d\rangle\rangle$. Then $\rho(J)=I$, and continuity together with compactness gives $\rho(J^n)=\overline{I^n}$.
The quotient $\overline{J^n}/\overline{J^{n+1}}$ is free of rank $d^n$, with basis given by monomials of length $n$ in $t_1,...,t_d$. $\overline{I^n}/\overline{I^{n+1}}$ is therefore finitely generated.
\end{proof}

Let $F$ be a field. Let $\mathbf{Vect}_F$ denote the tensor category of finite-dimensional $F$-vector spaces, and let $\pro-\mathbf{Vect}_F$ denote its pro-category.

\begin{definition}[\cite{Betts-thesis}*{Definition 2.1.21}] 
A \textbf{pro-finite-dimensional $F$-vector space} $V$ is an object $\{V_i\}$ in $\pro-\mathbf{Vect}_F$. We formally write $V=\varprojlim_i V_i$. For pro-finite-dimensional vector spaces $V=\varprojlim_i V_i$ and $W=\varprojlim_j W_j$, define their \textbf{completed tensor product}  $V\widehat{\otimes }W$ by $\varprojlim_{i,j} (V_i\otimes W_j)$. This construction makes $\pro-\mathbf{Vect}_F$ a tensor category.
\end{definition}

\begin{definition}[\cite{Betts-thesis}*{Definition 2.1.22}]
The \textbf{decompleted dual} of $V=\varprojlim_i V_i$ is the $F$-vector space $V^{\vee}:=\varinjlim_i V^*_i$, where $V^*_i=\homo_F(V_i,F)$.
\end{definition}

\begin{lemma}[\cite{Betts-thesis}*{Lemma 2.1.23}]\label{Fact: categorical equivalence between pro-finite-dimensional spaces and vector spaces}
The decompleted-dual functor induces a tensor categorical equivalence from $(\pro-\mathbf{Vect}_F)^{op}$ to all $F$-vector spaces. In particular, $\pro-\mathbf{Vect}_F$ is an $F$-linear rigid tensor category.
\end{lemma}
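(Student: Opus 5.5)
The plan is to build the equivalence on the standard finite-dimensional duality and extend it formally along filtered colimits and cofiltered limits. Let $\mathcal{P}=\pro-\mathbf{Vect}_F$. Send a pro-object $V=\varprojlim_i V_i$ to the vector space $V^{\vee}=\varinjlim_i V_i^*$. I will check three things, in this order: that $(-)^{\vee}$ is a well-defined functor $\mathcal{P}^{op}\to\mathbf{Vect}^{\infty}_F$ (all $F$-vector spaces); that it is fully faithful; and that it is essentially surjective. After that I will show it is monoidal, and deduce the rigidity statement.

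For well-definedness and full faithfulness I will use the definition of morphisms in a pro-category:
\[
\homo_{\mathcal{P}}(\varprojlim_i V_i,\varprojlim_j W_j)=\varprojlim_j\varinjlim_i \homo_F(V_i,W_j).
\]
On the other side, $\homo_F(\varinjlim_j W_j^*,\varinjlim_i V_i^*)=\varprojlim_j \homo_F(W_j^*,\varinjlim_i V_i^*)$. Since $W_j^*$ is finite-dimensional, hence a compact object of $\mathbf{Vect}^{\infty}_F$, this equals $\varprojlim_j\varinjlim_i\homo_F(W_j^*,V_i^*)$. Finite-dimensional duality gives $\homo_F(V_i,W_j)\cong\homo_F(W_j^*,V_i^*)$, naturally in $i$ and $j$. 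Together these give the required bijection, and its naturality shows at the same time that $(-)^{\vee}$ is a functor.

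For essential surjectivity, take any vector space $M$. It is the filtered union $\varinjlim_\alpha M_\alpha$ of its finite-dimensional subspaces. I will set $V=\varprojlim_\alpha M_\alpha^*$, a pro-object indexed by the opposite poset, and then $V^{\vee}=\varinjlim_\alpha M_\alpha^{**}\cong M$.

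For the tensor structure, $V\widehat{\otimes}W=\varprojlim_{i,j}V_i\otimes W_j$ is sent to $\varinjlim_{i,j}(V_i\otimes W_j)^*\cong\varinjlim_{i,j}V_i^*\otimes W_j^*$. This is $V^{\vee}\otimes W^{\vee}$, because $\otimes$ commutes with filtered colimits in each variable and the product of two filtered categories is filtered. The unit $F$ is sent to $F$. The associativity and commutativity constraints are compatible because they are so levelwise for finite-dimensional duality. Hence $(-)^{\vee}$ is a tensor equivalence $\mathcal{P}^{op}\simeq\mathbf{Vect}^{\infty}_F$. It is $F$-linear because the Hom bijections above are $F$-linear.

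I expect the main obstacle to be the rigidity assertion. Infinite-dimensional vector spaces are not dualizable, so rigidity in the strict sense of \cite{Deligne-tannakian-categories}*{Definition 1.7} cannot hold for every object, and the statement has to be read the way \cite{Betts-thesis} intends it. Concretely, I will define the internal Hom in $\mathcal{P}$ by $\underline{\homo}(V,W):=\varprojlim_j\varinjlim_i\homo(V_i,W_j)$, realized as a pro-object through the equivalence. I will then verify the adjunction $\homo(T\widehat{\otimes}V,W)\cong\homo(T,\underline{\homo}(V,W))$ by transporting it from the closed structure on vector spaces. Finally, I will note that the finite-dimensional objects, for which pro and ind agree, are rigid with the usual duals $V_i^*$. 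I would present the rigidity claim at exactly this level of generality, citing \cite{Betts-thesis}*{Lemma 2.1.23} for the precise convention.
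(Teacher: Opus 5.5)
Your argument for the equivalence itself is correct. Full faithfulness follows from compactness of finite-dimensional spaces together with finite-dimensional duality. Essential surjectivity follows from the directed system of finite-dimensional subspaces. Monoidality follows from $(V_i\otimes W_j)^*\cong V_i^*\otimes W_j^*$ and the commutation of $\otimes$ with filtered colimits. The paper gives no proof of this lemma; it only cites Betts. Its later uses in Proposition \ref{Prop:I-adic group algebra over Q_p} and Theorem \ref{Thm: group-like elements of the Iwasawa algebra is the Malcev completion} need only this tensor anti-equivalence.

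The gap is in your last paragraph. You are right that strict rigidity fails. In fact the final sentence of the statement is false as written. Dualizable objects are preserved by tensor equivalences and by passing to the opposite of a symmetric monoidal category, so the dualizable objects of $\pro\text{-}\mathbf{Vect}_F$ are exactly the finite-dimensional ones.

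However, your substitute also does not exist. You propose a closed structure obtained by transporting internal Homs from vector spaces, but transport across an \emph{anti}-equivalence reverses adjunctions:
$\homo_{\mathcal{P}}(W,T\widehat{\otimes}V)\cong\homo_F(T^{\vee}\otimes V^{\vee},W^{\vee})\cong\homo_F(T^{\vee},\homo_F(V^{\vee},W^{\vee}))=\homo_{\mathcal{P}}(X,T)$, with $X^{\vee}=\homo_F(V^{\vee},W^{\vee})$.
So the internal Hom of vector spaces yields a \emph{left} adjoint of $-\widehat{\otimes}V$, that is, a co-closed structure. It does not give the right adjoint you need for $\homo(T\widehat{\otimes}V,W)\cong\homo(T,\underline{\homo}(V,W))$.

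A right adjoint is in fact impossible when $V^{\vee}$ is infinite-dimensional. It would force $-\widehat{\otimes}V$ to preserve coproducts in $\mathcal{P}$, i.e.\ force $-\otimes V^{\vee}$ to preserve products of vector spaces. But the canonical map $F^{\mathbb{N}}\otimes V^{\vee}\to (V^{\vee})^{\mathbb{N}}$ has image consisting only of sequences spanning a finite-dimensional subspace, so it is not surjective.

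Consistently with this, your formula $\varprojlim_j\varinjlim_i\homo(V_i,W_j)$ is just the external Hom. Its evident pro-structure $\{V^{\vee}\otimes W_j\}_j$ has infinite-dimensional terms, so it is not an object of $\mathcal{P}$.

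Since \cite{Deligne-tannakian-categories}*{Definition 1.7} builds the existence of internal Homs into rigidity, no version of the rigidity claim can be proved. What you can prove is that $\pro\text{-}\mathbf{Vect}_F$ is an $F$-linear abelian symmetric tensor category, anti-equivalent to all $F$-vector spaces, whose finite-dimensional objects form a rigid full subcategory. You should state the lemma that way.
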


Using this completed tensor product, one defines  \textbf{pro-finite-dimensional Hopf algebras over $F$}; see \cite{Betts-thesis}*{Definition 2.1.24}.

\begin{definition}
Let $G$ be a topologically finitely generated pro-$p$ group, and let $I$ denote the augmentation ideal of the Iwasawa algebra $\widehat{\ZZ}_p[[G]]$. We call $\QQ_p[[G]]^{\wedge}_I:=\varprojlim_n \QQ_p\otimes_{\widehat{\ZZ}_p} (\widehat{\ZZ}_p[[G]]/\overline{I^n})$ \textbf{$\QQ_p$-Iwasawa algebra} of $G$.
\end{definition}

\begin{proposition}\label{Prop:I-adic group algebra over Q_p}
The $\QQ_p$-Iwasawa algebra $\QQ_p[[G]]^{\wedge}_I$ is a pro-finite-dimensional cocommutative Hopf algebra over $\QQ_p$. Its decompleted dual $(\QQ_p[[G]]^{\wedge}_I)^{\vee}$ is a commutative Hopf algebra. Consequently, $\Spec((\QQ_p[[G]]^{\wedge}_I)^{\vee})$ is an affine group scheme.
\end{proposition}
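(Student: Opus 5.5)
The plan is to put the Hopf structure on each finite level $A_n:=\QQ_p\otimes_{\widehat{\ZZ}_p}(\widehat{\ZZ}_p[[G]]/\overline{I^n})$ and then pass to the inverse limit. The key input is that the coproduct, counit and antipode of Proposition \ref{Prop: Iwasawa algebra is a topological Hopf algebra} respect the filtration by closed powers of $I$:
\[
\Delta(\overline{I^n})\subset \sum_{a+b=n}\overline{I^a}\,\widehat{\otimes}\,\overline{I^b},\qquad \epsilon(I)=0,\qquad S(\overline{I^n})\subset\overline{I^n}.
\]

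\textbf{Step 1: the filtration estimates.} I would first prove them on the generators given by Lemma \ref{Lem: generators of augmentation ideal}. For $g\in G$,
\[
\Delta(g-1)=(g-1)\widehat{\otimes}1+1\widehat{\otimes}(g-1)+(g-1)\widehat{\otimes}(g-1),
\]
so $\Delta(I)\subset I\widehat{\otimes}R+R\widehat{\otimes}I$, where $R=\widehat{\ZZ}_p[[G]]$. Since $\Delta$ is multiplicative, the same holds for $I^n$. Closures are then handled by Proposition \ref{Prop: computation of power of augmentation ideal}: I would check the inclusion at each finite level $\widehat{\ZZ}_p[G/H]$, where it is the classical statement for the group ring, and pass to the limit. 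For the antipode, $S(g-1)=g^{-1}-1$, and $S$ is an anti-automorphism, so $S(I^n)=I^n$.

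\textbf{Step 2: the finite levels.} By Proposition \ref{Prop: the quotient spaces of filtrations of Iwasawa algebra has finite rank}, each $A_n$ is a finite-dimensional $\QQ_p$-vector space, and the transition maps are surjective. Reducing $\Delta$ modulo the filtration gives maps
\[
A_{2n}\to A_n\otimes A_n,
\]
and these are compatible with the transition maps. Hence they define a morphism of pro-objects
\[
\varprojlim_n A_n\to\varprojlim_{m,n}(A_m\otimes A_n)=\QQ_p[[G]]^{\wedge}_I\,\widehat{\otimes}\,\QQ_p[[G]]^{\wedge}_I.
\]
In the same way, $\epsilon$ and $S$ induce maps of pro-objects, and multiplication is levelwise. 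The Hopf axioms (coassociativity, counit, antipode, and compatibility of $\Delta$ with multiplication) hold in $\widehat{\ZZ}_p[[G]]$, so they descend to each $A_n$ and hence to the pro-object. Cocommutativity holds because it holds on the dense subset $G$ and therefore on every $\widehat{\ZZ}_p[G/H]$.

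\textbf{Step 3: duality.} Lemma \ref{Fact: categorical equivalence between pro-finite-dimensional spaces and vector spaces} gives a tensor anti-equivalence. It turns the pro-finite-dimensional cocommutative Hopf algebra into a Hopf algebra $\varinjlim_n A_n^*$, which is commutative because the original is cocommutative. Its $\Spec$ is therefore an affine group scheme.

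\textbf{Main obstacle.} The delicate part is Step 1 for the closed ideals $\overline{I^n}$. There we must show that $\Delta$ lands in the completed tensor product with the right filtration, and that this filtration passes to the quotients and commutes with $\QQ_p\otimes$. I would handle it levelwise in $H$, then take limits using compactness; the finite generation from Proposition \ref{Prop: the quotient spaces of filtrations of Iwasawa algebra has finite rank} makes $\otimes\,\QQ_p$ commute with the relevant finite limits.
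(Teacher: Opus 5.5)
Your proposal is correct and follows essentially the paper's route: finite-dimensional levels $A_n$, compatibility of $\Delta,\epsilon,S$ with $\{\overline{I^n}\}$, assembly into a pro-Hopf algebra, then decompleted duality. The one simplification is that your maps $A_{2n}\to A_n\otimes A_n$ use only the inclusion $J_{2n}\subset K_{n,n}$ from the proof of Lemma~\ref{Lem: changing pro system}, rather than the flatness-based identification of pro-systems that the paper uses. Your worry about $\Delta$ landing in the completed tensor product is justified, since the paper handles it only implicitly. Resolve it integrally: with $R=\widehat{\ZZ}_p[[G]]$ and $R_H=\widehat{\ZZ}_p[G/H]$, use $(R/\overline{I^m})\otimes_{\widehat{\ZZ}_p}(R/\overline{I^n})\cong\varprojlim_H(R_H/I_{G/H}^m)\otimes_{\widehat{\ZZ}_p}(R_H/I_{G/H}^n)$, which follows from finite generation, compactness and Krull intersection, and tensor with $\QQ_p$ only afterwards. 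The order matters because $\QQ_p\otimes_{\widehat{\ZZ}_p}(-)$ does not commute with $\varprojlim_H$: for $G=\widehat{\ZZ}_p$, $\varprojlim_H\QQ_p\otimes_{\widehat{\ZZ}_p}(R_H/I_{G/H}^n)\cong\QQ_p$, while $\dim_{\QQ_p}A_n=n$.
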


\begin{proof}
Once the first sentence is established, the second and third sentences follow immediately from the tensor equivalence in Lemma \ref{Fact: categorical equivalence between pro-finite-dimensional spaces and vector spaces}. By Proposition \ref{Prop: the quotient spaces of filtrations of Iwasawa algebra has finite rank}, each quotient $\QQ_p\otimes_{\widehat{\ZZ}_p} (\widehat{\ZZ}_p[[G]]/\overline{I^n})$ is finite-dimensional over $\QQ_p$. Hence, $\QQ_p[[G]]^{\wedge}_I$ is pro-finite-dimensional. Since multiplication on $\widehat{\ZZ}_p[[G]]$ respects the $I$-adic filtration, it induces a  continuous multiplication $\nu:\QQ_p[[G]]^{\wedge}_I\widehat{\otimes}\QQ_p[[G]]^{\wedge}_I\rightarrow \QQ_p[[G]]^{\wedge}_I$. Together with the induced unit and augmentation, this makes $\QQ_p[[G]]^{\wedge}_I$ a pro-finite-dimensional augmented associative $\QQ_p$-algebra.     

Since $\widehat{\ZZ}_p[[G]]$ is torsion-free, $\widehat{\ZZ}_p[[G]]$ is flat over $\widehat{\ZZ}_p$. By Lemma \ref{Lem: tensor product of quotient modules}, $(\widehat{\ZZ}_p[[G]]/\overline{I^m})\otimes_{\widehat{\ZZ}_p} (\widehat{\ZZ}_p[[G]]/\overline{I^n}) \cong (\widehat{\ZZ}_p[[G]]\otimes_{\widehat{\ZZ}_p} \widehat{\ZZ}_p[[G]])/(\overline{I^m}\otimes_{\widehat{\ZZ}_p}  \widehat{\ZZ}_p[[G]]+ \widehat{\ZZ}_p[[G]]\otimes_{\widehat{\ZZ}_p} \overline{I^n})$, which is a finitely generated $\widehat{\ZZ}_p$-module. It follows that, for every $r$, $(\widehat{\ZZ}_p[[G]]\otimes_{\widehat{\ZZ}_p} \widehat{\ZZ}_p[[G]])/(\sum_{s+t=r} \overline{I^s}\otimes_{\widehat{\ZZ}_p}\overline{I^t})$ is a finitely generated $\widehat{\ZZ}_p$-module.

Because $\Delta(\overline{I^n})$ is contained in the $n$-th tensor-product filtration, $\Delta$ induces compatible maps $\Delta_n: \widehat{\ZZ}_p[[G]]/\overline{I^n}\rightarrow (\widehat{\ZZ}_p[[G]]\otimes_{\widehat{\ZZ}_p} \widehat{\ZZ}_p[[G]])/(\sum_{s+t=n} \overline{I^s}\otimes_{\widehat{\ZZ}_p}\overline{I^t})$. By Lemma \ref{Lem: changing pro system}, these give rise to a coproduct $\Delta$ on $\QQ_p[[G]]^{\wedge}_I$. Since the antipode on $\widehat{\ZZ}_p[[G]]$ preserves the closed $I$-adic filtration $\{\overline{I^n}\}$, it induces an antipode $S$ on $\QQ_p[[G]]^{\wedge}_I$. The Hopf-algebra axioms follow from those on $\widehat{\ZZ}_p[[G]]$.
\end{proof}

The following two lemmas are used in the proof of Proposition \ref{Prop:I-adic group algebra over Q_p}.

\begin{lemma}\label{Lem: tensor product of quotient modules}
Let $R$ be a commutative ring. Let $V$ be an $R$-module. Let $A,B\subset V$ be two $R$-submodules. Then there is a natural isomorphism $(V\otimes_R V)/\Im(A\otimes_R V\oplus V\otimes_R B)\rightarrow (V/A) \otimes_R (V/B)$.
\end{lemma}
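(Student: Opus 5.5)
The plan is to obtain the isomorphism from right exactness of the tensor product, applied twice. We never need to identify $\Im(A\otimes_R V)$ with $A\otimes_R V$; that identification would require flatness, which is not assumed. Throughout, $\Im$ denotes the image of the natural map $A\otimes_R V\oplus V\otimes_R B\rightarrow V\otimes_R V$ induced by the inclusions.

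First, the map. The bilinear surjection $V\times V\rightarrow (V/A)\otimes_R(V/B)$, $(v,w)\mapsto \bar v\otimes \bar w$, induces a surjective $R$-linear map $\pi:V\otimes_R V\rightarrow (V/A)\otimes_R (V/B)$. It sends each $a\otimes w$ with $a\in A$, and each $v\otimes b$ with $b\in B$, to zero. Hence $\pi$ kills $\Im(A\otimes_R V\oplus V\otimes_R B)$ and descends to a natural surjection
\[
\bar\pi:(V\otimes_R V)/\Im(A\otimes_R V\oplus V\otimes_R B)\rightarrow (V/A)\otimes_R (V/B).
\]

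Second, injectivity, which I will prove by building an inverse. I will define $\psi:(V/A)\otimes_R(V/B)\rightarrow (V\otimes_R V)/\Im(\cdots)$ on pure tensors by $\bar v\otimes \bar w\mapsto [v\otimes w]$. For this to be well defined, the assignment $(\bar v,\bar w)\mapsto [v\otimes w]$ must be a well-defined function on $V/A\times V/B$. Changing $v$ to $v+a$ changes $v\otimes w$ by $a\otimes w$, and changing $w$ to $w+b$ changes it by $v\otimes b$. Both of these lie in the image being quotiented out, so the function is well defined. It is visibly $R$-bilinear, so it induces $\psi$.

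Finally, $\psi\circ\bar\pi$ and $\bar\pi\circ\psi$ are both the identity on classes of pure tensors. Pure tensors generate both modules, so the two compositions are the identity and $\bar\pi$ is an isomorphism. Naturality in the data $(V,A,B)$ is clear from the construction. The only real care needed is the well-definedness check for $\psi$, which is immediate once stated. Equivalently, one could cite right exactness of $-\otimes_R V/B$ and $V\otimes_R -$ in sequence, but the direct inverse is shorter.
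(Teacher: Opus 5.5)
Your argument is correct, but it takes a somewhat more elementary route than the paper. The paper applies right exactness of the tensor product twice. The sequence $A\otimes_R V\to V\otimes_R V\to (V/A)\otimes_R V\to 0$ gives $(V/A)\otimes_R V\cong (V\otimes_R V)/\Im(A\otimes_R V)$. Tensoring $0\to B\to V\to V/B\to 0$ with $V/A$ then presents $(V/A)\otimes_R(V/B)$ as the further quotient by the image of $(V/A)\otimes_R B$. This is exactly the alternative you mention in your last sentence. You instead work directly from the universal property of the tensor product, writing down $\bar\pi$ and an explicit inverse $\psi$ through bilinear maps; in effect you reprove the special case of right exactness that is needed. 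The paper's route is shorter once right exactness is taken as known. However, its last isomorphism silently uses that, under $(V/A)\otimes_R V\cong (V\otimes_R V)/\Im(A\otimes_R V)$, the image of $(V/A)\otimes_R B$ corresponds to the image of $V\otimes_R B$; this holds because $V\otimes_R B\to (V/A)\otimes_R B$ is surjective. Your version avoids that bookkeeping, treats $A$ and $B$ symmetrically, and gives the explicit formula $[v\otimes w]\mapsto \bar v\otimes\bar w$, so naturality is immediate. Both arguments rightly work with images rather than with $A\otimes_R V$ and $V\otimes_R B$ themselves. Flatness is only needed later, in Lemma \ref{Lem: changing pro system}, to make those maps injective.
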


\begin{proof}
Consider the short exact sequence $0\rightarrow A\rightarrow V\rightarrow V/A\rightarrow 0$. Right exactness of the tensor product gives $A\otimes_R V\rightarrow V\otimes_R V\rightarrow (V/A)\otimes_R V\rightarrow 0$. Then $(V/A)\otimes_R V\cong \frac{V\otimes_R V}{\Im(A\otimes_R V)}$. Next, tensor the exact sequence $0\rightarrow B\rightarrow V\rightarrow V/B\rightarrow 0$ with $V/A$. This gives $(V/A)\otimes_R B\rightarrow (V/A)\otimes_R V\rightarrow (V/A)\otimes_R (V/B)\rightarrow 0$. Then $(V/A)\otimes_R (V/B)\cong \frac{(V/A)\otimes_R V}{\Im((V/A)\otimes_R B)}\cong \frac{V\otimes_R V}{\Im(A\otimes_R V\oplus V\otimes_R B)}$.
\end{proof}

\begin{lemma}\label{Lem: changing pro system}
Let $R$ be a commutative ring. Let $V$ be a flat $R$-module, and let $I_0=V\supset I_1\supset I_2\supset ...$ be a descending filtration by $R$-submodules. Then there is a canonical isomorphism of the pro $R$-modules $\{(V/I_m)\otimes_{R}(V/I_n)\}_{m,n}$ and $\{(V\otimes_{R} V)/(\sum_{s+t=r}I_s\otimes_{R} I_t)\}_{r}$.
\end{lemma}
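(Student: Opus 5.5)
The approach is to exhibit two morphisms of pro-systems, one in each direction, and show they are mutually inverse as pro-morphisms. Throughout, write $T_r:=\sum_{s+t=r}\Im(I_s\otimes_R I_t)\subset V\otimes_R V$. Flatness of $V$, together with the fact that each $I_s$ sits inside the flat module $V$, makes $I_s\otimes_R V\to V\otimes_R V$ injective; even without this we can work with images, which is what the quotients in the statement mean. We also use the conventions $I_0=V$ and $I_s\supset I_{s+1}$.

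\textbf{Map from the $r$-system to the $(m,n)$-system.} By Lemma \ref{Lem: tensor product of quotient modules},
\[
(V/I_m)\otimes_R(V/I_n)\cong (V\otimes_R V)/\bigl(\Im(I_m\otimes V)+\Im(V\otimes I_n)\bigr).
\]
Given $(m,n)$, take $r=m+n$. Every summand $I_s\otimes I_t$ with $s+t=m+n$ has either $s\ge m$ or $t\ge n$. In the first case $\Im(I_s\otimes I_t)\subset \Im(I_m\otimes V)$; in the second it lies in $\Im(V\otimes I_n)$. Hence $T_{m+n}\subset \Im(I_m\otimes V)+\Im(V\otimes I_n)$, which gives a surjection
\[
(V\otimes V)/T_{m+n}\twoheadrightarrow (V/I_m)\otimes(V/I_n).
\]
These maps are compatible with the transition maps on both sides, since all of them are induced by the identity of $V\otimes V$. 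So they define a morphism of pro-objects.

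\textbf{Map from the $(m,n)$-system to the $r$-system.} Given $r$, take $m=n=r$. We need $\Im(I_r\otimes V)+\Im(V\otimes I_r)\subset T_r$. This holds because $I_r\otimes V=I_r\otimes I_0$ is the summand $(s,t)=(r,0)$ of $T_r$, and similarly $V\otimes I_r$ is the summand $(0,r)$. So the identity of $V\otimes V$ induces
\[
(V/I_r)\otimes(V/I_r)\twoheadrightarrow (V\otimes V)/T_r,
\]
again compatibly in $r$.

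\textbf{Mutual inverses.} Both composites are induced by the identity on $V\otimes V$, so each composite equals a transition map of the relevant pro-system:
\[
(V\otimes V)/T_{2r}\to (V\otimes V)/T_r \quad\text{and}\quad (V/I_{2m'})\otimes(V/I_{2m'})\to (V/I_m)\otimes(V/I_n),
\]
where $m'=\max(m,n)$ (indexing through the cofinal diagonal). Such composites are the identity in $\pro$-$R$-modules, so the two morphisms are inverse isomorphisms and the isomorphism is canonical.

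The only real subtlety is bookkeeping in the pro-category. The index categories differ ($\mathbb{N}^2$ versus $\mathbb{N}$), so I would first note that the diagonal is cofinal in $\mathbb{N}^2$ to reduce to $\mathbb{N}$-indexed systems. The remaining step is to check that the level-wise maps form genuine pro-morphisms, which amounts to compatibility with the transitions; this holds because every map in sight is a quotient of the identity on $V\otimes V$. Flatness is not essential to the argument; it is used only to identify $I_s\otimes I_t$ with its image.
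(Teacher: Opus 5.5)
Your proposal is correct and follows the paper's proof. The two containments $\sum_{s+t=m+n}I_s\otimes I_t\subset I_m\otimes V+V\otimes I_n$ (since $s\ge m$ or $t\ge n$) and $I_r\otimes V+V\otimes I_r\subset\sum_{s+t=r}I_s\otimes I_t$ are exactly the mutual cofinality the paper checks; you additionally spell out the resulting pro-morphisms and note that their composites are transition maps (with your level maps this is $(V/I_{m+n})\otimes(V/I_{m+n})\to(V/I_m)\otimes(V/I_n)$ rather than the $2m'$ index you wrote, but this is harmless). One side remark is slightly off: flatness of $V$ makes $I_s\otimes V\to V\otimes V$ injective, but it does not by itself identify $I_s\otimes I_t$ with its image, which would need $I_s$ or $I_t$ flat. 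This does not affect the argument, since you work with images throughout, as the quotients in the statement do.
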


\begin{proof}
Since $V$ is flat, the map $A\otimes V\rightarrow V\otimes V$ is injective for every submodule $A\subset V$. Lemma \ref{Lem: tensor product of quotient modules} gives a natural isomorphism $(V/A) \otimes (V/B)\cong (V\otimes V)/(A\otimes V+ V\otimes B)$ for every pair of submodules $A,B$. It therefore suffices to show that the following two pro systems of submodules of $V\otimes_R V$ are cofinal to each other: $\{K_{m,n}=I_m\otimes V+V\otimes I_n\}_{m,n}$ and $\{J_r=\sum_{s+t=r}I_s\otimes I_t\}_r$.  

Since $I_r\otimes_R V$ and $V\otimes_R I_r$ occur among the summands defining $J_r$, we have $K_{n,n}\subset J_n$. Conversely, we claim that $J_{m+n}\subset K_{m,n}$. For every pair $s,t$ with $s+t=m+n$, either $s\geq m$ or $t\geq n$. In the first case, $I_s\otimes I_t\subset I_m\otimes V\subset K_{m,n}$; in the second case, $I_s\otimes I_t\subset V\otimes I_n\subset K_{m,n}$. Thus, every summand of $J_{m+n}$ lies in $K_{m,n}$.
\end{proof}

\subsection{Equivalence to Continuous Mal'cev Completion}\,

In this part, we prove that the affine group scheme $\Spec((\QQ_p[[G]]^{\wedge}_I)^{\vee})$ constructed from the dual of the $\QQ_p$-Iwasawa algebra is the continuous Mal'cev $\QQ_p$-completion of a topologically finitely generated pro-$p$ group $G$.

Let $\QQ_p[[G]]:=\QQ_p\otimes_{\widehat{\ZZ}_p} \widehat{\ZZ}_p[[G]]$. Let $\mathbf{Rep}_{\QQ_p,\cont}(G)$, $\mathbf{Rep}_{\QQ_p,\cont}(\widehat{\ZZ}_p[[G]])$ and $\mathbf{Rep}_{\QQ_p,\cont}(\QQ_p[[G]])$ denote the categories of continuous finite-dimensional  $\QQ_p$-representations of $G$, $\widehat{\ZZ}_p[[G]]$ and $\QQ_p[[G]]$, respectively.

\begin{proposition}\label{Prop: categorical equivalences between continuous representations over groups and group algebras}
Restriction along the continuous maps $G\rightarrow \widehat{\ZZ}_p[[G]]\rightarrow \QQ_p[[G]]$ induces tensor categorical equivalences $\mathbf{Rep}_{\QQ_p,\cont}(\QQ_p[[G]])\rightarrow \mathbf{Rep}_{\QQ_p,\cont}(\widehat{\ZZ}_p[[G]])\rightarrow \mathbf{Rep}_{\QQ_p,\cont}(G)$.
\end{proposition}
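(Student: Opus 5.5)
We show that each restriction functor is fully faithful and essentially surjective. Both functors are the identity on underlying vector spaces, so both are faithful and compatible with tensor products. A representation of $\QQ_p[[G]]$ or $\widehat{\ZZ}_p[[G]]$ on $V$ is a continuous (unital) algebra homomorphism into $\End(V)$; for $\QQ_p[[G]]$ we use the topology for which $\widehat{\ZZ}_p[[G]]$ is an open subring. The tensor structure on algebra representations is the one coming from the coproduct $\Delta$ of Proposition~\ref{Prop: Iwasawa algebra is a topological Hopf algebra}. Since $\Delta(g)=g\widehat{\otimes}g$, restriction to $G$ turns it into the usual tensor product of group representations. Fullness follows from a density argument. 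The $\widehat{\ZZ}_p$-span of the image of $G$ is dense in $\widehat{\ZZ}_p[[G]]$, being dense in each $\widehat{\ZZ}_p[G/H]$. So a $G$-equivariant linear map commutes, by continuity, with all of $\widehat{\ZZ}_p[[G]]$, and by $\QQ_p$-linearity with all of $\QQ_p[[G]]$.

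For essential surjectivity of the first functor, take a continuous $\rho:\widehat{\ZZ}_p[[G]]\to\End(V)$. It extends uniquely and $\QQ_p$-linearly to $\QQ_p[[G]]=\QQ_p\otimes_{\widehat{\ZZ}_p}\widehat{\ZZ}_p[[G]]$. Continuity of the extension is automatic, since it agrees with $\rho$ on the open subring and multiplication by $p^{-n}$ is a homeomorphism. So this step is formal.

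The main step is extending a continuous representation $\rho:G\to\GL(V)$ to $\widehat{\ZZ}_p[[G]]$. First, $\rho(G)$ is compact. Choosing a basis, we may average the standard lattice over the compact image: its $\widehat{\ZZ}_p$-span $L=\sum_{g\in G}\rho(g)\widehat{\ZZ}_p^m$ is bounded, hence a finitely generated full lattice, and it is $G$-stable. So after conjugation $\rho$ lands in $\GL_m(\widehat{\ZZ}_p)$. Next, for each $N$ the composite $G\to\GL_m(\ZZ/p^N)$ is continuous into a finite discrete group, so it factors through $G/H_N$ for some open normal subgroup $H_N$. It therefore extends linearly to a ring map $\widehat{\ZZ}_p[G/H_N]\to M_m(\ZZ/p^N)$, and hence to $\widehat{\ZZ}_p[[G]]\to M_m(\ZZ/p^N)$. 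These maps are compatible as $N$ varies. Passing to the limit gives a continuous ring homomorphism $\widehat{\ZZ}_p[[G]]\to M_m(\widehat{\ZZ}_p)\subset\End(V)$ extending $\rho$. It is unique by the density argument above.

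The main obstacle is this lattice step: producing a $G$-stable lattice and checking that the resulting extension is well defined and continuous. The key inputs are compactness of $G$ (it is pro-$p$) and the Hausdorff property, which make $L$ finitely generated. I would also note that unipotence is preserved in both directions, since the same flags are stable. This lets the equivalence restrict to the unipotent subcategories, which is how it will be used later.
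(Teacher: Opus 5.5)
Your proof is correct and follows essentially the same route as the paper. The first functor is formal. For the second you produce a $G$-stable $\widehat{\ZZ}_p$-lattice, factor the reductions $G\to\GL_m(\ZZ/p^N)$ through finite quotients $G/H_N$, and pass to the inverse limit to get a continuous map $\widehat{\ZZ}_p[[G]]\to M_m(\widehat{\ZZ}_p)$. The only differences are cosmetic: you obtain the lattice from boundedness of the compact image $\rho(G)$, whereas the paper uses that the stabilizer of a lattice is open and so has a finite $G$-orbit; and you spell out full faithfulness by density of the span of $G$, which the paper leaves implicit.
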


\begin{proof}
The first functor is an equivalence because a $\widehat{\ZZ}_p[[G]]$-action on a $\QQ_p$-vector space extends uniquely to $\QQ_p[[G]]=\QQ_p\otimes_{\widehat{\ZZ}_p}\widehat{\ZZ}_p[[G]]$. For the second functor, we extend each continuous $\QQ_p$-representation $V$ of $G$ uniquely to a continuous representation of $\widehat{\ZZ}_p[[G]]$. Denote $n=\dim_{\QQ_p}V$.

\textbf{Claim.} The representation $V$ admits a $G$-stable $\widehat{\ZZ}_p$-lattice $L$. 

\textit{Proof of Claim.} Choose a $\widehat{\ZZ}_p$-lattice $L_0\subset V$. Then the rank of $L_0$ is $n$. After choosing a basis of $L_0$, identify $\GL(V)$ with $\GL(n,\QQ_p)$. The stabilizer of $L_0$ is then $\GL(n,\widehat{\ZZ}_p)$. Since $\GL(n,\widehat{\ZZ}_p)$ is open in $\GL(n,\QQ_p)$, its preimage $H\subset G$ is open. Because $G/H$ is finite, the orbit of $L_0$ under $G$ consists of finitely many lattices $L_0,L_1,...,L_r$. Set $L=L_0+...+L_r$. Then $L$ is a $G$-stable $\widehat{\ZZ}_p$-lattice in $V$. This completes the proof of the claim.
\medskip

With respect to a basis of $L$, the representation takes values in $\GL(n,\widehat{\ZZ}_p)$. For $\alpha\geq 1$, let $G_{\alpha}$ be the kernel of the action of $G$ on $L/p^{\alpha}L$. Then $G_{\alpha}\subset G$ is open and normal. Since $\widehat{\ZZ}_p[[G]]\rightarrow \widehat{\ZZ}_p[G/G_{\alpha}]$ is continuous, $\widehat{\ZZ}_p[[G]]\rightarrow \widehat{\ZZ}_p[G/G_{\alpha}]\rightarrow \End(L/p^{\alpha}L)$ is continuous. Passing to the inverse limit gives a continuous homomorphism $\widehat{\ZZ}_p[[G]]\rightarrow \End(L)$. After extending scalars from $\widehat{\ZZ}_p$ to $\QQ_p$, this makes $V$ a continuous $\QQ_p$-representation of $\widehat{\ZZ}_p[[G]]$. This construction is inverse to the restriction functor and is compatible with tensor products, which completes the proof.
\end{proof}

\begin{definition}\label{Def: I-adically continuous representations}
\begin{enumerate}[leftmargin=0.25in]
    \item An \textbf{$I$-adically continuous finite-dimensional $\QQ_p$-representation} of $\widehat{\ZZ}_p[[G]]$ is a finite-dimensional $\QQ_p$-vector space $V$ together with a continuous $\widehat{\ZZ}_p$-algebra  homomorphism $\widehat{\ZZ}_p[[G]]\rightarrow \End_{\QQ_p}(V)$ whose kernel contains $I^n$ for some $n$.
    \item An \textbf{$I$-adically continuous finite-dimensional $\QQ_p$-representation} of $\QQ_p[[G]]^{\wedge}_I$ is a finite-dimensional $\QQ_p$-vector space $V$ together with a morphism of pro-algebras $\{ \QQ_p\otimes_{\widehat{\ZZ}_p} (\widehat{\ZZ}_p[[G]]/\overline{I^n})\}_n\rightarrow \End_{\QQ_p}(V)$.
\end{enumerate}
\end{definition}

Let $\mathbf{Rep}_{\QQ_p}(\Spec((\QQ_p[[G]]^{\wedge}_I)^{\vee}))$ denote the category of finite-dimensional algebraic $\QQ_p$-representations of this affine group scheme $\Spec((\QQ_p[[G]]^{\wedge}_I)^{\vee})$. Let $\mathbf{Rep}_{\QQ_p,\cont}(\QQ_p[[G]]^{\wedge}_I)$ denote the category of $I$-adically continuous finite-dimensional $\QQ_p$-representations of $\QQ_p[[G]]^{\wedge}_I$.

\begin{proposition}\label{Prop: categorical equivalence between representations over affine group scheme and Hopf algebra}
If $G$ is a topologically finitely generated pro-$p$ group, then 
$\mathbf{Rep}_{\QQ_p}(\Spec((\QQ_p[[G]]^{\wedge}_I)^{\vee}))$ and $\mathbf{Rep}_{\QQ_p,\cont}(\QQ_p[[G]]^{\wedge}_I)$ are canonically equivalent as $\QQ_p$-linear tensor categories.   
\end{proposition}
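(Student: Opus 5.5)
Write $A_n:=\QQ_p\otimes_{\widehat{\ZZ}_p}(\widehat{\ZZ}_p[[G]]/\overline{I^n})$, so that $\QQ_p[[G]]^{\wedge}_I=\varprojlim_n A_n$ and, by Proposition \ref{Prop: the quotient spaces of filtrations of Iwasawa algebra has finite rank}, each $A_n$ is a finite-dimensional $\QQ_p$-algebra. Set $\mathcal{O}:=(\QQ_p[[G]]^{\wedge}_I)^{\vee}=\varinjlim_n A_n^*$. The plan is to pass through the standard equivalence between algebraic representations of an affine group scheme and finite-dimensional comodules over its coordinate ring, and then dualize comodules into modules level by level.

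\textbf{Step 1.} By definition, $\mathbf{Rep}_{\QQ_p}(\Spec\mathcal{O})$ is the category of finite-dimensional right $\mathcal{O}$-comodules, with tensor product induced by the multiplication of $\mathcal{O}$.

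\textbf{Step 2.} A comodule structure $\delta:V\to V\otimes\mathcal{O}$ on a finite-dimensional $V$ has image inside $V\otimes A_n^*$ for some $n$. This holds because $V$ is finite-dimensional and $\mathcal{O}$ is the filtered union of the subspaces $A_n^*$; these are subcoalgebras, being duals of the quotient algebras $A_n$.

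\textbf{Step 3.} For finite-dimensional $A_n$, the usual duality identifies linear maps $V\to V\otimes A_n^*$ with linear maps $A_n\otimes V\to V$. Under this identification, the comodule axioms over $A_n^*$ correspond exactly to the associativity and unit axioms for an $A_n$-module structure, i.e.\ to an algebra map $A_n\to\End_{\QQ_p}(V)$. Compatibility as $n$ varies is automatic, since $A_{n+1}\to A_n$ is dual to $A_n^*\hookrightarrow A_{n+1}^*$. Therefore
\[
\varinjlim_n \homo_{\mathrm{alg}}(A_n,\End(V))=\homo_{\pro}(\{A_n\}_n,\End(V)),
\]
which is precisely the datum in Definition \ref{Def: I-adically continuous representations}(2). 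Morphisms match in the same way: a linear map is a comodule map if and only if it is $A_n$-linear for all sufficiently large $n$.

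\textbf{Step 4.} It remains to check compatibility with tensor products and units; this is the step that needs care. On the comodule side, $V\otimes W$ carries the coaction obtained from $\delta_V\otimes\delta_W$ followed by the multiplication $\mathcal{O}\otimes\mathcal{O}\to\mathcal{O}$. By Lemma \ref{Fact: categorical equivalence between pro-finite-dimensional spaces and vector spaces}, this multiplication is dual to the completed coproduct on $\QQ_p[[G]]^{\wedge}_I$. Concretely, it is given by the pro-maps $\Delta_n$ into $\{A_m\otimes A_{m'}\}$ built via Lemma \ref{Lem: changing pro system}. Dually, the module structure on $V\otimes W$ is defined by
\[
A_r\xrightarrow{\Delta}A_m\otimes A_{m'}\to \End(V)\otimes\End(W)=\End(V\otimes W)
\]
for $r$ large, which is exactly how one tensors modules over a (pro-)bialgebra. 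Likewise the unit object corresponds to the counit $\epsilon$, and duals correspond to the antipode $S$.

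One must verify that this level-wise description is well defined. This uses the cofinality statement in Lemma \ref{Lem: changing pro system}: given $V$ factoring through $A_m$ and $W$ factoring through $A_{m'}$, the coproduct $\Delta$ sends $A_{m+m'}$ into $A_m\otimes A_{m'}$ compatibly. With that in hand, the identifications of Steps 2 and 3 are monoidal. Hence the functor sending a comodule to the corresponding pro-algebra representation is a $\QQ_p$-linear tensor equivalence.
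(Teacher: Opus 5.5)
Your proposal is correct and follows essentially the same route as the paper. Both arguments identify representations of $\Spec(\mathcal{O})$ with finite-dimensional $\mathcal{O}$-comodules, observe that the coaction on a finite-dimensional $V$ lands in $V\otimes A_n^*$ for some $n$, and pass between $A_n^*$-comodules and $A_n$-modules by finite-dimensional duality; the paper does the converse direction with explicit bases rather than your basis-free duality. Your Step 4, which checks monoidality via the induced maps $A_{m+m'}\to A_m\otimes A_{m'}$ from Lemma \ref{Lem: changing pro system}, spells out what the paper only asserts in its final sentence.
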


\begin{proof}
Let $V$ be a finite-dimensional representation of the affine group scheme $\Spec((\QQ_p[[G]]^{\wedge}_I)^{\vee})$. Equivalently, $V$ is a right comodule over the commutative Hopf algebra $\QQ_p[[G]]^{\wedge}_I)^{\vee}$. Write the coaction as $\Delta_V:V\rightarrow V\otimes_{\QQ_p}(\QQ_p[[G]]^{\wedge}_I)^{\vee}$. Since $V$ is finite-dimensional, the image of $\Delta_V$ is contained in $V\otimes_{\QQ_p}(\QQ_p\otimes_{\widehat{\ZZ}_p} \widehat{\ZZ}_p[[G]]/\overline{I^n})^{\vee}$ for some $n$. Define $\nu^n_V:(\QQ_p\otimes_{\widehat{\ZZ}_p} \widehat{\ZZ}_p[[G]]/\overline{I^n})\otimes_{\QQ_p} V\rightarrow V$ by $\nu^n_V(a,v)=(1_V\otimes a)(\Delta_V(v))$. The comodule axioms imply that this defines an associative unital action, making $V$ an $I$-adically continuous representation of $\QQ_p[[G]]^{\wedge}_I$.

Conversely, let $W$ be an $I$-adically continuous finite-dimensional representation of $\QQ_p[[G]]^{\wedge}_I$. The action $\QQ_p[[G]]^{\wedge}_I\widehat{\otimes}_{\QQ_p} W\rightarrow W$ factors through a finite quotient. So it is represented by a map $\mu_W:(\QQ_p\otimes_{\widehat{\ZZ}_p} \widehat{\ZZ}_p[[G]]/\overline{I^n})\otimes_{\QQ_p} W\rightarrow W$ for some $n$. Choose a basis $\{h_j\}$ of $\QQ_p\otimes_{\widehat{\ZZ}_p} \widehat{\ZZ}_p[[G]]/\overline{I^n}$, with dual basis $\{h^{\vee}_j\}$, and a basis $\{e_i\}$ of $W$. Write $\mu_W(h_j\otimes e_i)=\sum_{s}\alpha^{sj}_i e_s$ for $\alpha^{sj}_i\in \QQ_p$. Define $\Delta_W: W\rightarrow W\otimes_{\QQ_p}(\QQ_p\otimes_{\widehat{\ZZ}_p} \widehat{\ZZ}_p[[G]]/\overline{I^n})^{\vee}$ by $\Delta_W(e_i)=\sum_{st}\alpha^{st}_ie_i\otimes h^{\vee}_t$. Associativity and unitality of the action are equivalent to the coassociativity and counit axioms for $\Delta_W$. These two constructions are mutually inverse and compatible with tensor products.
\end{proof}

Let $\mathbf{URep}_{\QQ_p,\cont}(G)$ denote the category of finite-dimensional unipotent continuous $\QQ_p$-representations of $G$, and let $\mathbf{Rep}_{\QQ_p,I,\cont}(\widehat{\ZZ}_p[[G]])$ denote the category of finite-dimensional $I$-adically continuous $\QQ_p$-representations of $\widehat{\ZZ}_p[[G]]$.

\begin{proposition}\label{Prop: categorical equivalence between unipotent representations over a pro-p group and representations over group algebra}
If $G$ is a topologically finitely generated pro-$p$ group, then
$\mathbf{URep}_{\QQ_p,\cont}(G)$, $\mathbf{Rep}_{\QQ_p,I,\cont}(\widehat{\ZZ}_p[[G]])$ and $\mathbf{Rep}_{\QQ_p,\cont}(\QQ_p[[G]]^{\wedge}_I)$ are canonically equivalent as $\QQ_p$-linear tensor categories.
\end{proposition}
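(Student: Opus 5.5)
We prove the two equivalences separately. The second, between $\mathbf{Rep}_{\QQ_p,I,\cont}(\widehat{\ZZ}_p[[G]])$ and $\mathbf{Rep}_{\QQ_p,\cont}(\QQ_p[[G]]^{\wedge}_I)$, is essentially formal. Given an $I$-adically continuous representation $\rho:\widehat{\ZZ}_p[[G]]\to\End_{\QQ_p}(V)$ with $I^n\subset\Ker\rho$, continuity and the Hausdorffness of $\End_{\QQ_p}(V)$ give $\overline{I^n}\subset\Ker\rho$. So $\rho$ factors through $\widehat{\ZZ}_p[[G]]/\overline{I^n}$, and then, since $V$ is a $\QQ_p$-vector space, through $\QQ_p\otimes_{\widehat{\ZZ}_p}(\widehat{\ZZ}_p[[G]]/\overline{I^n})$. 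This is precisely a morphism of pro-algebras as in Definition \ref{Def: I-adically continuous representations}(2). Conversely, a morphism of pro-algebras is represented by a map from some finite stage, and precomposing with $\widehat{\ZZ}_p[[G]]\to \QQ_p\otimes(\widehat{\ZZ}_p[[G]]/\overline{I^n})$ gives a representation killing $I^n$. This map is continuous because $\widehat{\ZZ}_p[[G]]/\overline{I^n}$ is a finitely generated $\widehat{\ZZ}_p$-module by Proposition \ref{Prop: the quotient spaces of filtrations of Iwasawa algebra has finite rank}, so any $\widehat{\ZZ}_p$-linear map from it to a finite-dimensional $\QQ_p$-space is continuous. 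The two constructions are mutually inverse. Compatibility with tensor products follows because the coproduct on $\QQ_p[[G]]^{\wedge}_I$ is induced from $\Delta$ on $\widehat{\ZZ}_p[[G]]$, via Lemma \ref{Lem: changing pro system}.

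For the first equivalence, we start from Proposition \ref{Prop: categorical equivalences between continuous representations over groups and group algebras}, which identifies $\mathbf{Rep}_{\QQ_p,\cont}(G)$ with $\mathbf{Rep}_{\QQ_p,\cont}(\widehat{\ZZ}_p[[G]])$ as tensor categories via restriction along $G\to\widehat{\ZZ}_p[[G]]$. It then suffices to show that a continuous representation $\rho$ of $\widehat{\ZZ}_p[[G]]$ is unipotent on $G$ if and only if $\rho(I^n)=0$ for some $n$.

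\emph{Unipotent implies nilpotent on $I$.} Take a $G$-stable flag $V=V_m\supset\cdots\supset V_0=0$ with trivial action on the graded pieces. Each $g-1$ maps $V_i$ into $V_{i-1}$. The set of $x\in\widehat{\ZZ}_p[[G]]$ with $\rho(x)V_i\subset V_{i-1}$ for all $i$ is a closed two-sided ideal, because the $V_i$ are stable under $\rho(\widehat{\ZZ}_p[[G]])$ (this image lies in the closure of the span of $\rho(G)$, and subspaces are closed). By Lemma \ref{Lem: generators of augmentation ideal} this ideal therefore contains $I$. Hence $\rho(I)$ lowers the flag by one step, and $\rho(I^m)=0$.

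\emph{Nilpotent on $I$ implies unipotent.} If $\rho(I^n)=0$, set $V_j:=\rho(I^{j})V$. This gives a $G$-stable filtration terminating at $0$, on whose graded pieces every $g-1\in I$ acts by zero. So the representation is unipotent. Both subcategories are full, so the equivalence of Proposition \ref{Prop: categorical equivalences between continuous representations over groups and group algebras} restricts to the desired tensor equivalence. Composing the two equivalences gives the three-way canonical equivalence.

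The main obstacle is the closedness step in the first direction: passing from the statement for $g-1$ to all of $I$ requires that $\rho$ is continuous on $\widehat{\ZZ}_p[[G]]$. This is supplied by Proposition \ref{Prop: categorical equivalences between continuous representations over groups and group algebras} together with Lemma \ref{Lem: generators of augmentation ideal}. The remaining care point is the tensor compatibility for $I$-adic quotients, which we handle by Lemma \ref{Lem: changing pro system}.
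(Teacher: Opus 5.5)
Your proof is correct and follows the same route as the paper. The equivalence between $I$-adically continuous representations of $\widehat{\ZZ}_p[[G]]$ and of $\QQ_p[[G]]^{\wedge}_I$ is formal from the definitions. The other equivalence comes from restricting Proposition \ref{Prop: categorical equivalences between continuous representations over groups and group algebras}, after showing via Lemma \ref{Lem: generators of augmentation ideal} and the filtration $\rho(I^j)V$ that the unipotent representations are exactly those killed by some $I^n$. Your closed ideal of flag-lowering elements is a slightly more careful version of the paper's one-line claim that $(g_1-1)\cdots(g_N-1)=0$ for all $g_i$ is equivalent to vanishing on $\overline{I^N}$.
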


\begin{proof}
Proposition \ref{Prop: categorical equivalences between continuous representations over groups and group algebras} and Definition \ref{Def: I-adically continuous representations} give a canonical equivalence of tensor categories $\mathbf{Rep}_{\QQ_p,I,\cont}(\widehat{\ZZ}_p[[G]])$ and $\mathbf{Rep}_{\QQ_p,\cont}(\QQ_p[[G]]^{\wedge}_I)$. To establish a tensor equivalence between $\mathbf{URep}_{\QQ_p,\cont}(G)$ and $\mathbf{Rep}_{\QQ_p,I,\cont}(\widehat{\ZZ}_p[[G]])$, it remains to show that, under the equivalence in Proposition \ref{Prop: categorical equivalences between continuous representations over groups and group algebras}, continuous unipotent representations of $G$ correspond precisely to $I$-adically continuous representations of $\widehat{\ZZ}_p[[G]]$. Let $V$ be a unipotent finite-dimensional representation of $G$. Choose a $G$-stable filtration of $V$ with trivial successive quotients. If the filtration has length $N$, then $(g_1-1_V)\cdot ...\cdot (g_N-1_V)=0$ for all
$g_1,...,g_N\in G$. By Lemma \ref{Lem: generators of augmentation ideal}, this is equivalent to requiring that the associated continuous homomorphism $\widehat{\ZZ}_p[[G]]\rightarrow \End_{\QQ_p}(V)$ vanishes on $\overline{I^N}$. Thus the representation is $I$-adically continuous. Conversely, let $W$ be a finite-dimensional continuous $\QQ_p$-representation of $\widehat{\ZZ}_p[[G]]$. If $\overline{I^N}$ acts trivially on $W$, then every product $(g_1-1)\cdot ...\cdot (g_N-1)$ acts trivially. The filtration $W\supset IW\supset ...\supset \overline{I^n}W=0$ has trivial $G$-action on successive quotients. So $W$ is a unipotent representation of $G$.
\end{proof}

\begin{theorem}\label{Thm: equivalence between mal'cev completion and group-like elements of Hopf algebra}
If $G$ is a topologically finitely generated pro-$p$ group, then the affine group scheme $\Spec((\QQ_p[[G]]^{\wedge}_I)^{\vee})$ is canonically isomorphic to the continuous Mal'cev $\QQ_p$-completion of $G$.
\end{theorem}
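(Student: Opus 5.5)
The plan is to compare Tannakian categories and then invoke Theorem \ref{Thm: equivalence between Malcev completion and Tannakian formalism of unipotent representations}. Write $A:=\Spec((\QQ_p[[G]]^{\wedge}_I)^{\vee})$. By Theorem \ref{Fact: Tannakian Formalism}, an affine group scheme over $\QQ_p$ is recovered, up to canonical isomorphism, from its category of finite-dimensional representations together with the forgetful fibre functor. It therefore suffices to produce a $\QQ_p$-linear tensor equivalence
\[
\mathbf{Rep}_{\QQ_p}(A)\simeq \mathbf{URep}_{\QQ_p,\cont}(G)
\]
that is compatible with the forgetful functors to $\mathbf{Vect}_{\QQ_p}$. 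Once we have it, $A\cong \underline{\text{Aut}}^{\otimes}(\omega)=U_{G,\QQ_p}$. By Theorem \ref{Thm: equivalence between Malcev completion and Tannakian formalism of unipotent representations}, the latter is the continuous Mal'cev $\QQ_p$-completion $G\widehat{\otimes}\QQ_p$.

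The equivalence is the composite of Proposition \ref{Prop: categorical equivalence between representations over affine group scheme and Hopf algebra}, which identifies $\mathbf{Rep}_{\QQ_p}(A)$ with $\mathbf{Rep}_{\QQ_p,\cont}(\QQ_p[[G]]^{\wedge}_I)$, and Proposition \ref{Prop: categorical equivalence between unipotent representations over a pro-p group and representations over group algebra}, which identifies the latter with $\mathbf{URep}_{\QQ_p,\cont}(G)$. Each of these functors leaves the underlying vector space unchanged: comodules become modules on the same space, and restriction along $G\to\widehat{\ZZ}_p[[G]]\to\QQ_p[[G]]^{\wedge}_I$ is the identity on vector spaces. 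The composite therefore commutes strictly with the fibre functors. Both propositions are stated as tensor equivalences, so the composite is a tensor equivalence as well.

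The step I expect to need the most care is tensor compatibility. I will check that, under the comodule-to-module dictionary, the tensor product of comodules (which uses the multiplication on $(\QQ_p[[G]]^{\wedge}_I)^{\vee}$) corresponds to the action of $\QQ_p[[G]]^{\wedge}_I$ on $V\otimes W$ through the coproduct $\Delta$. Since $\Delta(g)=g\widehat{\otimes}g$ by Proposition \ref{Prop: Iwasawa algebra is a topological Hopf algebra}, this is exactly the diagonal $G$-action on $V\otimes W$. The unit object and duals are handled the same way, using the counit and the antipode $S(g)=g^{-1}$.

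For canonicity, I will also identify the universal map. Composing $G\to\widehat{\ZZ}_p[[G]]\to\QQ_p[[G]]^{\wedge}_I$ lands in the group-like elements. A group-like element $x$ defines a $\QQ_p$-algebra homomorphism $(\QQ_p[[G]]^{\wedge}_I)^{\vee}\to\QQ_p$, namely evaluation at $x$, and hence a point of $A(\QQ_p)$. Under the equivalence above, this point acts on each representation exactly as $g$ does. So the map $G\to A(\QQ_p)$ agrees with the canonical map $f$ of Lemma \ref{Lemma: the construction of continuous Malcev completion by Tannakian formalism}. This is what makes the isomorphism $A\cong G\widehat{\otimes}\QQ_p$ one under $G$, rather than an abstract one.
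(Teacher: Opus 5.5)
Your proposal is correct and follows essentially the same route as the paper: compose the comodule--module equivalence for $\Spec((\QQ_p[[G]]^{\wedge}_I)^{\vee})$ with the equivalence between continuous unipotent representations of $G$ and $I$-adically continuous representations of $\QQ_p[[G]]^{\wedge}_I$, then apply Tannakian reconstruction and the identification of $U_{G,\QQ_p}$ with $G\widehat{\otimes}\QQ_p$. Your explicit checks that the composite commutes with the fibre functors and that the induced map from $G$ agrees with the Tannakian map $f$ spell out points the paper leaves implicit.
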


\begin{proof}
By Proposition \ref{Prop: categorical equivalence between representations over affine group scheme and Hopf algebra} and Proposition \ref{Prop: categorical equivalence between unipotent representations over a pro-p group and representations over group algebra}, $\mathbf{URep}_{\QQ_p,\cont}(G)$ and $\mathbf{Rep}_{\QQ_p}(\Spec((\QQ_p[[G]]^{\wedge}_I)^{\vee}))$ are canonically equivalent as tensor categories. The Tannakian formalism (Theorem \ref{Fact: Tannakian Formalism}) and Theorem \ref{Thm: equivalence between Malcev completion and Tannakian formalism of unipotent representations} therefore identify $\Spec((\QQ_p[[G]]^{\wedge}_I)^{\vee})$ with the continuous Mal'cev $\QQ_p$-completion of $G$.
\end{proof}

In \cite{Quillen-rational-homotopy}, Quillen identifies the group-like elements of the $I$-adically complete Hopf algebra $\QQ[G]^{\wedge}_I$ of a finitely generated group $G$ with the $\QQ$-points of the Mal'cev $\QQ$-completion of $G$. We then show an analogous result for a topologically finitely generated pro-$p$ group.

Recall that, for a Hopf algebra $H$ over a field $F$, its grouplike elements are coalgebra morphisms $F\rightarrow H$.

\begin{definition}
Define the subset $\mathcal{G}(\QQ_p[[G]]^{\wedge}_I)\subset \QQ_p[[G]]^{\wedge}_I$ of \textbf{grouplike elements} to be $\homo_{\mathrm{coalg}}(\QQ_p, \QQ_p[[G]]^{\wedge}_I)$, where morphisms are taken in the category of pro-finite-dimensional coalgebras.
\end{definition}

\begin{proposition}
The set $\mathcal{G}(\QQ_p[[G]]^{\wedge}_I)$ is naturally a group.    
\end{proposition}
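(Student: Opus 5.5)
The plan is to show that grouplike elements are closed under the multiplication $\nu$ of $\QQ_p[[G]]^{\wedge}_I$, contain the unit, and are closed under the antipode $S$, which then serves as inverse. The argument will be the usual Hopf-algebra one, transported to the pro-finite-dimensional setting. Throughout I work with the identification of Lemma \ref{Fact: categorical equivalence between pro-finite-dimensional spaces and vector spaces}. Under the decompleted dual, a coalgebra morphism $\QQ_p\to \QQ_p[[G]]^{\wedge}_I$ is the same as an algebra morphism $(\QQ_p[[G]]^{\wedge}_I)^{\vee}\to \QQ_p$. The commutative Hopf algebra $(\QQ_p[[G]]^{\wedge}_I)^{\vee}$ is given by Proposition \ref{Prop:I-adic group algebra over Q_p}. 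So $\mathcal{G}(\QQ_p[[G]]^{\wedge}_I)$ is identified with $\Spec((\QQ_p[[G]]^{\wedge}_I)^{\vee})(\QQ_p)$, the set of $\QQ_p$-points of an affine group scheme. That set is a group: the product of points $x,y$ is $(x\otimes y)\circ\Delta^{\vee}$, the unit is the counit, and the inverse is $x\circ S^{\vee}$.

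To make this concrete and intrinsic, which is the form needed for the comparison with Quillen's construction later, I would also unwind it on the pro side. An element $x=(x_n)_n\in\varprojlim_n \QQ_p\otimes(\widehat{\ZZ}_p[[G]]/\overline{I^n})$ is grouplike if and only if $\epsilon(x)=1$ and $\Delta(x)=x\widehat{\otimes}x$ in $\QQ_p[[G]]^{\wedge}_I\widehat{\otimes}\QQ_p[[G]]^{\wedge}_I$. Here the coproduct is the one built from the maps $\Delta_n$ via Lemma \ref{Lem: changing pro system}. Given two grouplike elements $x,y$, the identity $\Delta(xy)=\Delta(x)\Delta(y)=(x\widehat{\otimes}y')\cdots$ reduces to $(x\widehat{\otimes}x)(y\widehat{\otimes}y)=xy\widehat{\otimes}xy$. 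This uses that $\Delta$ is multiplicative on the completed tensor product, which is one of the Hopf axioms recorded in Proposition \ref{Prop:I-adic group algebra over Q_p}. Likewise $\epsilon(xy)=1$, and $1$ is grouplike. For inverses, the antipode axiom $\nu\circ(S\widehat{\otimes}1)\circ\Delta=\eta\circ\epsilon$, applied to $x$, gives $S(x)x=1$, and symmetrically $xS(x)=1$. Since $S$ is an anti-coalgebra map and the coproduct is cocommutative, $S(x)$ is again grouplike. Associativity is inherited from $\nu$.

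The main obstacle is bookkeeping. Every identity must be interpreted in the pro-category, levelwise on finite quotients and compatibly with the reindexing of Lemma \ref{Lem: changing pro system}. Moreover, the multiplication on $\QQ_p[[G]]^{\wedge}_I\widehat{\otimes}\QQ_p[[G]]^{\wedge}_I$ must be shown to be compatible with the filtration $\sum_{s+t=r}\overline{I^s}\otimes\overline{I^t}$ so that the computation above makes sense. I would handle this by appeal to the duality equivalence. All Hopf axioms are equalities of morphisms in $\pro-\mathbf{Vect}_{\QQ_p}$, and these correspond exactly to the commutative Hopf algebra axioms on the dual. So the cleanest rigorous route is the first paragraph's: identify $\mathcal{G}$ with $\homo_{\mathrm{alg}}((\QQ_p[[G]]^{\wedge}_I)^{\vee},\QQ_p)$ and conclude that the group structure is the one on $\QQ_p$-points of the affine group scheme of Proposition \ref{Prop:I-adic group algebra over Q_p}. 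The concrete formulas above then just describe the resulting product, unit and inverse.
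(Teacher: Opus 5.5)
Your proof is correct. The route you single out as the rigorous one differs from the paper's, although your second paragraph is essentially the paper's proof.

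\textbf{The paper's route.} The paper works directly on the pro side. It sets $x\cdot y:=\mu\circ(x\widehat{\otimes}y)$ and gets grouplikeness of the product from bialgebra compatibility. Associativity comes from that of $\mu$. The identity is the unit map, and the inverse is $S\circ x$, checked via the antipode axioms.

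\textbf{Your route.} Your main argument instead transports the problem across the decompleted-dual equivalence of Lemma~\ref{Fact: categorical equivalence between pro-finite-dimensional spaces and vector spaces}. You identify $\mathcal{G}(\QQ_p[[G]]^{\wedge}_I)$ with $\homo_{\QQ_p\mathrm{-alg}}((\QQ_p[[G]]^{\wedge}_I)^{\vee},\QQ_p)$, whose group structure is the standard one on $\QQ_p$-points of an affine group scheme.

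\textbf{Comparison.} That identification is exactly what the paper does afterwards, in the proof of Theorem~\ref{Thm: group-like elements of the Iwasawa algebra is the Malcev completion}. So your approach merges this proposition with that theorem. In exchange, the pro-category bookkeeping you flag is absorbed into the tensor equivalence: multiplicativity of $\Delta$ on the completed tensor square, and compatibility with the filtration $\sum_{s+t=r}\overline{I^s}\otimes\overline{I^t}$. The fact that the later bijection is a group isomorphism also becomes tautological. The paper's version has the merit of being intrinsic to $\QQ_p[[G]]^{\wedge}_I$, in the spirit of Quillen. Both ultimately rest on Proposition~\ref{Prop:I-adic group algebra over Q_p} producing a genuine Hopf algebra in $\pro-\mathbf{Vect}_{\QQ_p}$.

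\textbf{Two small slips.}
\begin{enumerate}
\item In $(x\otimes y)\circ\Delta^{\vee}$, the map must be the coproduct of $(\QQ_p[[G]]^{\wedge}_I)^{\vee}$, which is $\mu^{\vee}$, the dual of the multiplication. The dual of $\Delta$ is the multiplication of the dual Hopf algebra, and with it the composite does not type-check. Using $\mu^{\vee}$ is what makes the convolution product match $\mu\circ(x\widehat{\otimes}y)$.
\item Cocommutativity is not needed to see that $S(x)$ is grouplike, since $x\widehat{\otimes}x$ is already fixed by the flip.
\end{enumerate}
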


\begin{proof}
Let $x,y\in \mathcal{G}(\QQ_p[[G]]^{\wedge}_I)$. Define their product $x\cdot y$ by $\QQ_p\cong \QQ_p\otimes_{\QQ_p} \QQ_p\xrightarrow{x\widehat{\otimes} y} \QQ_p[[G]]^{\wedge}_I\widehat{\otimes}_{\QQ_p} \QQ_p[[G]]^{\wedge}_I\xrightarrow{\mu} \QQ_p[[G]]^{\wedge}_I$. The bialgebra compatibility implies that $x\cdot y$ is again group-like, and associativity follows from the associativity of $\mu$. The unit $e\in \mathcal{G}(\QQ_p[[G]]^{\wedge}_I)$ is given by the multiplicative unit $\QQ_p\rightarrow  \QQ_p[[G]]^{\wedge}_I$. Define the inverse of $x$ by $x^{-1}:=S\circ x$, where $S$ is the antipode. The antipode axioms show that $x^{-1}$ is group-like and that $x\cdot x^{-1}=x^{-1}\cdot x=e$.
\end{proof}

\begin{theorem}\label{Thm: group-like elements of the Iwasawa algebra is the Malcev completion}
If $G$ is a topologically finitely generated pro-$p$ group, then $\mathcal{G}(\QQ_p[[G]]^{\wedge}_I)$ is canonically isomorphic to the group of $\QQ_p$-points of the continuous Mal'cev $\QQ_p$-completion of $G$.
\end{theorem}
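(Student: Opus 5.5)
By Theorem \ref{Thm: equivalence between mal'cev completion and group-like elements of Hopf algebra}, the continuous Mal'cev $\QQ_p$-completion of $G$ is $\Spec(A)$ with $A:=(\QQ_p[[G]]^{\wedge}_I)^{\vee}$. It therefore suffices to construct a natural group isomorphism
\[
\mathcal{G}(\QQ_p[[G]]^{\wedge}_I)\cong \Spec(A)(\QQ_p)=\homo_{\QQ_p\text{-alg}}(A,\QQ_p).
\]
The strategy is pure duality, using the tensor equivalence of Lemma \ref{Fact: categorical equivalence between pro-finite-dimensional spaces and vector spaces}.

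\textbf{Step 1: translate grouplike elements into characters of $A$.} A grouplike element is a morphism $x:\QQ_p\rightarrow \QQ_p[[G]]^{\wedge}_I$ of pro-finite-dimensional coalgebras. Under the decompleted dual, which is a contravariant tensor equivalence, the pro-coalgebra $\QQ_p[[G]]^{\wedge}_I$ corresponds to the algebra $A$, and the trivial coalgebra $\QQ_p$ corresponds to the algebra $\QQ_p$. Hence $x$ corresponds to a $\QQ_p$-linear map $x^{\vee}:A\rightarrow \QQ_p$.

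Concretely, $x$ is a compatible family $x_n\in \QQ_p\otimes_{\widehat{\ZZ}_p}(\widehat{\ZZ}_p[[G]]/\overline{I^n})$. The associated functional is evaluation at $x_n$ on the $n$-th piece of $A=\varinjlim_n(\QQ_p\otimes\widehat{\ZZ}_p[[G]]/\overline{I^n})^*$.

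Because the equivalence is monoidal, the two coalgebra conditions translate as follows. The condition $\Delta(x)=x\widehat{\otimes}x$ is exactly multiplicativity of $x^{\vee}$, since the product on $A$ is the dual of $\Delta$, and the completed tensor product dualizes to the ordinary tensor product. The counit condition $\epsilon(x)=1$ is exactly unitality $x^{\vee}(1)=1$. This gives a bijection
\[
\mathcal{G}(\QQ_p[[G]]^{\wedge}_I)\cong \homo_{\QQ_p\text{-alg}}(A,\QQ_p).
\]
It is natural because it is just the functor of Lemma \ref{Fact: categorical equivalence between pro-finite-dimensional spaces and vector spaces} applied to $\homo$-sets.

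\textbf{Step 2: check the bijection is a group homomorphism.} On $\Spec(A)(\QQ_p)$ the product is convolution: $\phi\ast\psi=(\phi\otimes\psi)\circ\Delta_A$, where the coproduct $\Delta_A$ is dual to the multiplication $\mu$ on $\QQ_p[[G]]^{\wedge}_I$. Dualizing the composite $\mu\circ(x\widehat{\otimes}y)$ that defines $x\cdot y$ gives exactly $(x^{\vee}\otimes y^{\vee})\circ\Delta_A$. The units match, since the unit of $\QQ_p[[G]]^{\wedge}_I$ dualizes to the counit of $A$. The inverses match, since the antipodes are dual to each other.

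\textbf{Step 3: compatibility with $G$.} I will also check that the composite $G\rightarrow \widehat{\ZZ}_p[[G]]\rightarrow \QQ_p[[G]]^{\wedge}_I$ lands in the grouplike elements, by Proposition \ref{Prop: Iwasawa algebra is a topological Hopf algebra}. Its dual is the universal map $G\rightarrow \Spec(A)(\QQ_p)$ from the Tannakian identification: an element $g$ acts on each $I$-adically continuous representation through its image in the appropriate finite quotient, which matches the construction in Proposition \ref{Prop: categorical equivalence between representations over affine group scheme and Hopf algebra}. This makes the isomorphism canonical, not merely abstract.

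\textbf{Main obstacle.} The delicate point is Step 1: verifying that the identification $(V\widehat{\otimes}W)^{\vee}\cong V^{\vee}\otimes W^{\vee}$ really converts $x\widehat{\otimes}x$ into the functional $a\otimes b\mapsto x^{\vee}(a)x^{\vee}(b)$. Equivalently, one must check that a morphism of pro-objects out of the constant object $\QQ_p$ is the same as a compatible family of elements $x_n$. I would handle this levelwise. Each $a\in A$ lies in some finite-dimensional dual piece, and the coproduct there is computed through the finite quotients of Lemma \ref{Lem: changing pro system}, where the statement is ordinary finite-dimensional Hopf duality.
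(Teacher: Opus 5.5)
Your proposal is correct and matches the paper's proof. Both reduce via Theorem \ref{Thm: equivalence between mal'cev completion and group-like elements of Hopf algebra} to $\Spec((\QQ_p[[G]]^{\wedge}_I)^{\vee})(\QQ_p)$, then use the decompleted-dual tensor equivalence of Lemma \ref{Fact: categorical equivalence between pro-finite-dimensional spaces and vector spaces} to identify coalgebra maps $\QQ_p\to\QQ_p[[G]]^{\wedge}_I$ with algebra maps $(\QQ_p[[G]]^{\wedge}_I)^{\vee}\to\QQ_p$, compatibly with the group laws; your explicit unit/inverse check and your Step 3 on compatibility with the map from $G$ only spell out details the paper asserts or omits.
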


\begin{proof}
By Theorem \ref{Thm: equivalence between mal'cev completion and group-like elements of Hopf algebra}, it suffices to show that $\mathcal{G}(\QQ_p[[G]]^{\wedge}_I)$ is canonically isomorphic to the group of $\QQ_p$-points of $\Spec((\QQ_p[[G]]^{\wedge}_I)^{\vee})$.
By the decompleted-dual equivalence of Lemma \ref{Fact: categorical equivalence between pro-finite-dimensional spaces and vector spaces}, $\mathcal{G}(\QQ_p[[G]]^{\wedge}_I)$ is naturally bijective to the set $\homo_{\QQ_p\mathrm{-alg}}((\QQ_p[[G]]^{\wedge}_I)^{\vee},\QQ_p)$. The latter is, by definition, the set of $\QQ_p$-points of the affine scheme $\Spec((\QQ_p[[G]]^{\wedge}_I)^{\vee})$. The decompleted-dual functor is compatible with multiplication, so the bijection is an isomorphism of groups. 
\end{proof}

\section{$\QQ_p$-Analogue of Mal'cev's Construction}

In this section, we construct a $\QQ_p$-analogue of Mal'cev's original definition of the Mal'cev completion in \cite{Clement-Majewicz-Zyman-theory-nilpotent-groups}*{Section 4.3}\cite{Malcev-nilpotent-torsion-free-groups} for pro-$p$ groups.

\subsection{Nilpotent Pro-$p$ Groups and $p$-adic Analytic Manifolds}\,

We briefly recall the definitions of $p$-adic analytic manifolds and $p$-adic analytic groups from \cite{Dixon-duSautoy-Mann-Segal-analytic-pro-p-groups}. A \textbf{$p$-adic analytic manifold} is a topological space locally modeled on open subsets of $\widehat{\ZZ}_p^n$, whose transition maps are $\QQ_p$-analytic. A \textbf{$p$-adic analytic group} $G$ is a topological group equipped with a $p$-adic analytic manifold structure for which multiplication and inversion are analytic. The tangent space $T_eG$ at the unit $e\in G$ carries a canonical $\QQ_p$-Lie algebra structure.

\begin{lemma}
Let $G$ be an affine algebraic group over $\QQ_p$. Then
\begin{enumerate}[leftmargin=0.25in]
    \item the group $G(\QQ_p)$, equipped with the $p$-adic topology, is a $p$-adic analytic group;
    \item there is a canonical isomorphism $\Lie(G)\rightarrow T_e(G(\QQ_p))$ of Lie algebras.
\end{enumerate}
\end{lemma}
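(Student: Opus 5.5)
We reduce both parts to the case of $\GL_n$ by choosing a closed embedding. Since $G$ is an affine algebraic group over $\QQ_p$, there is a closed immersion $\iota:G\hookrightarrow \GL_n$ for some $n$. Then $\iota$ identifies $G(\QQ_p)$ with the common zero set in $\GL_n(\QQ_p)$ of finitely many polynomials $f_1,\dots,f_r$ in the matrix entries. These polynomials generate the ideal of $G$ in $\OO(\GL_n)=\QQ_p[x_{ij},\det^{-1}]$. We give $G(\QQ_p)$ the subspace topology. This agrees with its $p$-adic topology, by Proposition \ref{Prop: topology of a unipotent group} and Lemma \ref{Lemma: continuity of kn} applied to any choice of coordinates. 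The group $\GL_n(\QQ_p)$ is an open subset of $M_n(\QQ_p)\cong \QQ_p^{n^2}$. Multiplication and inversion on it are given by polynomials and by polynomials divided by $\det$, so it is a $p$-adic analytic group, and its tangent space at $e$ is $M_n(\QQ_p)=\mathfrak{gl}_n$ with the commutator bracket.

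For part (1), we show that $G(\QQ_p)$ is a closed analytic submanifold of $\GL_n(\QQ_p)$. The route I would take is the closed subgroup theorem for $p$-adic analytic groups (Serre, \emph{Lie algebras and Lie groups}, Part II, or \cite{Dixon-duSautoy-Mann-Segal-analytic-pro-p-groups}, Theorem 9.6): a closed subgroup of a $p$-adic analytic group is an analytic subgroup. The group $G(\QQ_p)$ is closed because each $f_i$ is continuous. It follows that $G(\QQ_p)$ is a $p$-adic analytic group, and its group operations are the restrictions of those of $\GL_n(\QQ_p)$. As an alternative that also serves part (2), we can use that $G$ is smooth (Cartier, since $\mathrm{char}=0$) of dimension $d=\dim\Lie(G)$. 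The Jacobian of $(f_1,\dots,f_r)$ at $e$ then has rank $n^2-d$. By homogeneity under left translation by $G(\QQ_p)$, the same rank holds at every point of $G(\QQ_p)$. The $p$-adic implicit function theorem then exhibits $G(\QQ_p)$ locally as the graph of an analytic map.

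For part (2), the algebraic Lie algebra $\Lie(G)$ is the kernel of $G(\QQ_p[\varepsilon])\to G(\QQ_p)$. Under $\iota$, this kernel is the set of $X\in\mathfrak{gl}_n$ with $f_i(1+\varepsilon X)=0$, that is, $df_i|_e(X)=0$ for all $i$. The analytic tangent space $T_e(G(\QQ_p))$ is the tangent space of the submanifold constructed above. By the implicit function theorem, this is also $\bigcap_i \ker df_i|_e$, because the $f_i$ cut out $G(\QQ_p)$ with the correct rank. Hence both spaces are the same subspace of $\mathfrak{gl}_n$. This gives a linear isomorphism $\Lie(G)\to T_e(G(\QQ_p))$. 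We check independence of the embedding via functoriality: any morphism of algebraic groups induces compatible maps on both sides, both computed as differentials at $e$. Both brackets are restrictions of the matrix commutator on $\mathfrak{gl}_n$. On the algebraic side this is standard. On the analytic side the bracket is computed from the second-order term of the group commutator in local coordinates, and for $\GL_n$ this gives $XY-YX$. So the isomorphism respects brackets.

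The main obstacle is the rank statement in the alternative argument: showing that the analytic zero locus near $e$ has exactly dimension $d$, so that the analytic tangent space is not larger than the scheme-theoretic one. Smoothness of $G$ settles this, since the Jacobian criterion for the reduced smooth scheme $G$ gives the rank $n^2-d$ at $\QQ_p$-points. I expect the remaining steps to be routine once the closed subgroup theorem is cited.
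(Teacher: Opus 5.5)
Your proposal is correct and follows the paper's route: realize $G$ as a closed subgroup of $\GL_n$, apply the $p$-adic closed-subgroup theorem for (1), and compare $\Lie(G)$ and $T_e(G(\QQ_p))$ as subspaces of $\mathfrak{gl}_n$ for (2). Your smoothness/Jacobian argument supplies what the paper's ``restricting this identification to $G$'' leaves implicit: the inclusion $T_e(G(\QQ_p))\subseteq \Lie(G)$ is automatic, so the real risk is that $T_e(G(\QQ_p))$ is a proper subspace, not that it is larger as you phrase it; to make the implicit-function step airtight, add that by smoothness $G$ is cut out Zariski-locally near $e$ by $n^2-d$ of the $f_i$ with independent differentials at $e$.
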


\begin{proof}
By \cite{Deligne-tannakian-categories}*{Corollary 2.5}, $G$ has a faithful finite-dimensional $\QQ_p$-representation. Thus $G$ can be realized as a closed algebraic subgroup of $\GL(n,\QQ_p)$, so $G(\QQ_p)$ is a closed subgroup of  $\GL(n,\QQ_p)$ defined by polynomial equations. It follows from \cite{Dixon-duSautoy-Mann-Segal-analytic-pro-p-groups}*{Theorem 9.6} that $G(\QQ_p)$ is a $p$-adic analytic group. Both $\Lie(\GL(n,\QQ_p))$ and $T_e(\GL(n,\QQ_p))$ are canonically identified with $\mathfrak{gl}(n,\QQ_p)$. Restricting this identification to $G$ proves item (2).
\end{proof}

\begin{definition}
Let $G$ be a topologically finitely generated pro-$p$ group. Let $\Gamma^*G$ be the lower central series of $G$. By Theorem \ref{Fact: finitely generated profinite groups are good}, each term $\Gamma^rG$ is closed in $G$.  The \textbf{nilpotency class} of $G$ is either $\infty$ or the smallest integer $c$ such that $\Gamma^{c+1}G=\{e\}$. $G$ is \textbf{nilpotent} if its nilpotency class is a finite number.   
\end{definition}

\begin{proposition}
Every topologically finitely generated nilpotent pro-$p$ group is $p$-adic analytic.
\end{proposition}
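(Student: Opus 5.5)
The plan is to use Lazard's characterization as presented in \cite{Dixon-duSautoy-Mann-Segal-analytic-pro-p-groups}: a topological group is $p$-adic analytic if and only if it contains an open subgroup that is a finitely generated pro-$p$ group of finite rank. Equivalently (Theorem 8.36 / Corollary 8.34 there), a pro-$p$ group is $p$-adic analytic if and only if it has finite rank, i.e.\ there is a uniform bound $r$ on the minimal number of topological generators $d(H)$ of all closed subgroups $H\subset G$. So it suffices to show that a topologically finitely generated nilpotent pro-$p$ group $G$ has finite rank.

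\textbf{Step 1: the lower central quotients are finitely generated $\widehat{\ZZ}_p$-modules.} Let $c$ be the nilpotency class. By Theorem \ref{Fact: finitely generated profinite groups are good}, every $\Gamma^i G$ is closed, so each quotient $\Gamma^iG/\Gamma^{i+1}G$ is an abelian pro-$p$ group, i.e.\ a $\widehat{\ZZ}_p$-module. If $x_1,\dots,x_d$ topologically generate $G$, then the multilinearity of iterated commutators modulo $\Gamma^{i+1}G$ shows that the finitely many classes of the $i$-fold commutators $[x_{j_1},[x_{j_2},\dots,x_{j_i}]]$ generate a dense $\ZZ$-submodule of $\Gamma^iG/\Gamma^{i+1}G$. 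Hence the $\widehat{\ZZ}_p$-submodule they span is compact and dense, so equal to the whole quotient. Thus $\Gamma^iG/\Gamma^{i+1}G$ is a finitely generated $\widehat{\ZZ}_p$-module, say generated by $m_i$ elements. Every closed submodule of a finitely generated $\widehat{\ZZ}_p$-module ($\widehat{\ZZ}_p$ being a PID) needs at most $m_i$ generators.

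\textbf{Step 2: bound the rank.} Let $H\subset G$ be closed. Then $H$ has the closed filtration $H\cap\Gamma^iG$, and the successive quotients $(H\cap\Gamma^iG)/(H\cap\Gamma^{i+1}G)$ embed as closed subgroups of $\Gamma^iG/\Gamma^{i+1}G$, so each needs at most $m_i$ topological generators. Lifting generators through the filtration, which terminates at $\Gamma^{c+1}G=\{e\}$, and using that a pro-$p$ group generated modulo a closed normal subgroup together with generators of that subgroup is generated by the union, gives $d(H)\le m_1+\dots+m_c$. Hence $G$ has finite rank, and by Lazard's theorem it is $p$-adic analytic.

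The main obstacle is Step 1: justifying that commutators of the topological generators generate the quotients topologically. The continuity and closedness issues are handled by Theorem \ref{Fact: finitely generated profinite groups are good}, together with compactness, which makes images of compact sets closed. An alternative route avoids the rank criterion by exhibiting directly an open uniform subgroup, such as a suitable $G^{p^k}$, but the finite-rank route seems cleanest.
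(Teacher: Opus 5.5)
Your proof is correct, but it is organized differently from the paper's. The paper inducts on the nilpotency class. Its base case is abelian (a finitely generated $\widehat{\ZZ}_p$-module), and at each step it applies the extension result \cite{Dixon-duSautoy-Mann-Segal-analytic-pro-p-groups}*{Theorem 9.7}, that an extension of $p$-adic analytic groups is $p$-adic analytic. You bypass that theorem and go directly to Lazard's finite-rank criterion, bounding $d(H)\le m_1+\dots+m_c$ (with $m_i\le d^i$) for every closed $H\subset G$ through the filtration $H\cap\Gamma^iG$. For pro-$p$ groups, the extension theorem is essentially subadditivity of rank plus Lazard's criterion, so your argument is the paper's induction unrolled into one explicit estimate.

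Your version buys completeness and a stronger by-product. To apply the extension theorem to $1\rightarrow\Gamma^cG\rightarrow G\rightarrow G/\Gamma^cG\rightarrow 1$, the paper tacitly needs $\Gamma^cG$ to be a topologically finitely generated abelian pro-$p$ group. This is not automatic, since closed subgroups of finitely generated pro-$p$ groups need not be finitely generated, and your Step 1 is exactly that verification. Moreover, your conclusion that every closed subgroup of $G$ is topologically finitely generated justifies facts the paper later uses without comment:
\begin{enumerate}
\item that $\Gamma^cG$ is a finitely generated $\widehat{\ZZ}_p$-module, in the lemma on $\Tor(G)$;
\item that $Z_i(G)/Z_{i-1}(G)$ admits a finite $\widehat{\ZZ}_p$-basis, in Definition \ref{Def: Malcev basis}.
\end{enumerate}
The paper's version, in turn, is shorter and more modular: it uses only closure of analytic groups under extensions, and would apply unchanged to other iterated extensions of analytic groups.
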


\begin{proof}
This follows by induction on the nilpotency class, using \cite{Dixon-duSautoy-Mann-Segal-analytic-pro-p-groups}*{Theorem 9.7} at each extension step.
\end{proof}

\begin{corollary}\label{Cor: Lie algebra of a nilpotent $p$-adic analytic group is nilpotent}\label{Cor: Lie algebra of nilpotent group is nilpotent}
If $G$ is a topologically finitely generated nilpotent pro-$p$ group, then its $p$-adic Lie algebra $\Lie(G)$ is nilpotent.
\end{corollary}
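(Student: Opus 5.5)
We argue by induction on the nilpotency class $c$ of $G$, with the goal of showing that the lower central series of $\Lie(G)$ terminates. Throughout we use the standard dictionary between closed subgroups of a $p$-adic analytic group and Lie subalgebras \cite{Dixon-duSautoy-Mann-Segal-analytic-pro-p-groups}*{Chapter 9}. A closed normal subgroup $N\subset G$ is itself $p$-adic analytic. Its Lie algebra $\Lie(N)$ is an ideal of $\Lie(G)$, and $\Lie(G/N)\cong \Lie(G)/\Lie(N)$.

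\textbf{Base case.} If $c=1$, then $G$ is abelian. Commutativity of $G$ forces the commutator map to be trivial near $e$. The Lie bracket is the quadratic term of the commutator $(x,y)\mapsto xyx^{-1}y^{-1}$ in local coordinates, so the bracket vanishes and $\Lie(G)$ is abelian.

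\textbf{Inductive step.} Let $Z=\Gamma^{c}G$. This subgroup is closed by Theorem \ref{Fact: finitely generated profinite groups are good}, and it is central in $G$. By induction, $\Lie(G/Z)\cong \Lie(G)/\Lie(Z)$ is nilpotent of class at most $c-1$. It then suffices to show that $\Lie(Z)$ is central in $\Lie(G)$, since a central extension of a nilpotent Lie algebra is nilpotent.

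\textbf{Centrality of $\Lie(Z)$.} Conjugation by any $g\in G$ fixes $Z$ pointwise. Hence $\mathrm{Ad}(g)$ is the identity on $\Lie(Z)$ for every $g$ in a neighbourhood of $e$. Differentiating with respect to $g$ gives $\mathrm{ad}(x)|_{\Lie(Z)}=0$ for all $x\in\Lie(G)$, that is, $[\Lie(G),\Lie(Z)]=0$.

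\textbf{A more direct alternative.} One could instead use the exponential/logarithm on a uniform open subgroup $H\subset G$ \cite{Dixon-duSautoy-Mann-Segal-analytic-pro-p-groups}*{Chapters 4, 9}. There $\Lie(G)=\QQ_p\otimes L_H$ with $L_H=\log H$. For $x,y\in L_H$, the bracket is $[x,y]=\lim_n p^{-2n}\log\bigl(\exp(p^nx),\exp(p^ny)\bigr)$. Iterating this expresses $[x_1,[x_2,\dots,x_{c+1}]]$ as a limit of rescaled logarithms of $(c+1)$-fold group commutators, and these are trivial because $\Gamma^{c+1}H\subset\Gamma^{c+1}G=\{e\}$.

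\textbf{Main obstacle.} The delicate point in the inductive route is justifying the identification $\Lie(G/Z)\cong\Lie(G)/\Lie(Z)$ and the differentiation of $\mathrm{Ad}$. Both are standard properties of analytic groups, but they require citing the precise results from \cite{Dixon-duSautoy-Mann-Segal-analytic-pro-p-groups}. In the direct route via commutator limits, the difficulty is instead checking that nested limits commute with nested brackets, which follows from continuity of the Lie operations on $L_H$.
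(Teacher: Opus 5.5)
Your proposal is correct and follows essentially the same route as the paper. Both argue by induction on the nilpotency class through the central extension $1\to \Gamma^c G\to G\to G/\Gamma^c G\to 1$, obtaining $[\Lie(Z),\Lie(G)]=0$ by differentiating conjugation; the paper differentiates the trivial conjugation by elements of $Z$, while you differentiate $\mathrm{Ad}(g)|_{\Lie(Z)}=\mathrm{id}$ in $g$, which is the symmetric form of the same computation, so your alternative uniform-subgroup route is not needed.
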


\begin{proof}
Proceeding by induction on the nilpotency class, it suffices to observe that a central extension of $p$-adic analytic groups $1\rightarrow Z\rightarrow G\rightarrow H\rightarrow 1$ induces a central extension of Lie algebras. Since $Z\subset Z(G)$, conjugation by every element of $Z$ is trivial. Differentiating the conjugation action gives $[\Lie(Z),\Lie(G)]=0$.
\end{proof}

\begin{lemma}
Let $G$ be a topologically finitely generated nilpotent pro-$p$ group. Then its set of torsion elements, denoted $\Tor(G)$, is a closed normal subgroup. Moreover, $\Tor(G)$ is a finite $p$-group.
\end{lemma}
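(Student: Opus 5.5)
The plan is to combine two facts: finitely generated nilpotent groups have a finite torsion subgroup, and $G$ has a dense finitely generated subgroup. The cleanest route goes through $p$-adic analytic structure, or equivalently through Noetherian properties of $\widehat{\ZZ}_p$-modules along the lower central series.

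First we show that every closed subgroup of $G$ is topologically finitely generated. Argue by induction on the nilpotency class $c$. The successive quotients $\Gamma^rG/\Gamma^{r+1}G$ are closed by Theorem \ref{Fact: finitely generated profinite groups are good}. They are abelian pro-$p$ groups, and they are topologically generated by the images of iterated commutators of the finitely many topological generators, because commutators are multilinear modulo $\Gamma^{r+1}G$ and continuous. Hence each quotient is a finitely generated $\widehat{\ZZ}_p$-module. Since $\widehat{\ZZ}_p$ is Noetherian, every closed subgroup of such a module is finitely generated, and its torsion part is finite.

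Next we prove that $\Tor(G)$ is a subgroup by induction on $c$, using the central extension $1\to \Gamma^cG\to G\to G/\Gamma^cG\to 1$. By induction, $T':=\Tor(G/\Gamma^cG)$ is a finite normal subgroup. Let $\tilde T$ be its preimage in $G$; this is a closed normal subgroup, and it is a central extension of the finite group $T'$ by the finitely generated abelian pro-$p$ group $\Gamma^cG$.

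The key point is to show that in $\tilde T$ the torsion elements form a finite subgroup, and that every torsion element of $G$ lies in $\tilde T$. The second claim is clear, since the image of a torsion element is torsion.

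For the first claim we take $m=|T'|$, a power of $p$. For $x\in\tilde T$ we have $x^m\in \Gamma^cG$. Because $\Gamma^cG$ is central and $\tilde T/\Gamma^cG$ is finite, the transfer (or the standard argument that the map $x\mapsto x^m$ is a homomorphism into the center modulo a bounded-exponent error) shows that $[\tilde T,\tilde T]$ is finite. Indeed, for a central extension with finite quotient, Schur's theorem gives that the derived subgroup is finite.

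Then $\tilde T/[\tilde T,\tilde T]$ is an abelian, finitely generated $\widehat{\ZZ}_p$-module, so its torsion is finite. The torsion of $\tilde T$ is the preimage of that finite torsion subgroup, intersected with torsion elements. Since the preimage is finite-by-finite, it is finite, and every one of its elements is torsion. Thus $\Tor(G)=\Tor(\tilde T)$ is a finite subgroup.

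Normality follows because conjugation preserves element orders. Closedness follows because the subgroup is finite in a Hausdorff group. Finally, it is a $p$-group because the elements of finite order in a pro-$p$ group have $p$-power order.

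The main obstacle is the Schur-type step: showing that a torsion element of $\tilde T$ generates, together with the others, only finitely many elements. I would first try Schur's theorem applied to $\tilde T$, whose center has finite index. That theorem needs only that $\tilde T/Z(\tilde T)$ is finite, and this holds since $\Gamma^cG\subseteq Z(\tilde T)$.

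As a fallback, one can use the analytic structure. By the Proposition above, $G$ is $p$-adic analytic and so contains an open uniform, hence torsion-free, subgroup. Every torsion element therefore injects into a finite quotient, which bounds the orders of torsion elements. Combined with nilpotency, this gives a locally finite set of bounded exponent.
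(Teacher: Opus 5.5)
Your proof is correct. It has the same skeleton as the paper's: induction on the class, and pulling $\Tor(G/\Gamma^cG)$ back to a subgroup $\tilde T$ (the paper's $H$) that is a central extension of a finite group by $\Gamma^cG$. The key step is different, though.

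The paper takes the subgroup property from the general theorem that the torsion elements of an abstract nilpotent group form a subgroup. It then proves finiteness by choosing a free $\widehat{\ZZ}_p$-submodule $A\subset\Gamma^cG$ of finite index: $\tilde T/A$ is finite, and $\Tor(\tilde T)$ meets the central torsion-free $A$ trivially, so it injects into $\tilde T/A$.

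You instead apply Schur's theorem to the center-by-finite group $\tilde T$, so $[\tilde T,\tilde T]$ is finite. You then identify $\Tor(\tilde T)$ with the preimage of the finite torsion of $\tilde T^{\mathrm{ab}}$. This gives the subgroup property and finiteness at once, with no appeal to the nilpotent-group theorem, at the cost of invoking Schur. The paper's route is shorter, given its citation.

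Two small remarks:
\begin{itemize}
\item You announce that every closed subgroup of $G$ is topologically finitely generated, but you never finish that claim, and it is not needed. What you actually use is that $\Gamma^cG$ is a finitely generated $\widehat{\ZZ}_p$-module, which you justify more carefully than the paper does. You also use that $\tilde T^{\mathrm{ab}}$ is finitely generated. This holds because $\tilde T$ is topologically generated by generators of $\Gamma^cG$ together with finitely many coset representatives; you should state that.
\item Your analytic fallback is essentially the paper's idea of a torsion-free subgroup of finite index, but as phrased it only bounds element orders. To get finiteness you still need that $\Tor(G)$ is a subgroup. Then it meets an open normal torsion-free $N$ trivially and embeds in the finite group $G/N$.
\end{itemize}
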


\begin{proof}
By \cite{Clement-Majewicz-Zyman-theory-nilpotent-groups}*{Theorem 2.26}, the torsion elements of a nilpotent group form a normal subgroup. It remains to prove finiteness. Once this is established, $\Tor(G)$ is closed because $G$ is Hausdorff. It is then a $p$-group since every finite subgroup of a pro-$p$ group is a $p$-group by \cite{Dixon-duSautoy-Mann-Segal-analytic-pro-p-groups}*{Proposition 1.26 (iii)}.

We prove finiteness by induction on the nilpotency class $c$. For $c=1$, $G$ is a finitely generated $\widehat{\ZZ}_p$-module, so the lemma is clear. Suppose inductively that the conclusion holds for nilpotent pro-$p$ groups of nilpotency class less than $c$. Consider the central extension $1\rightarrow \Gamma^{c}G\rightarrow G\xrightarrow{\pi} G/\Gamma^{c}G\rightarrow 1$. The quotient $G/\Gamma^{c}G$ has nilpotency class at most $c-1$. By induction, $\Tor(G/\Gamma^{c}G)$ is finite. Set $H:=\pi^{-1}(\Tor(G/\Gamma^{c}G))$. Every torsion element of $G$ maps to a torsion element of the quotient, so $\Tor(G)\subset H$. Since $G$ has nilpotency class $c$, the subgroup  $\Gamma^{c}G$ is central. Since $\Gamma^cG$ is a finitely generated $\widehat{\ZZ}_p$-module, choose a free $\widehat{\ZZ}_p$-submodule $A\subset\Gamma^{c}G $ such that $\Gamma^{c}G/A$ is finite. There is an exact sequence $1\rightarrow \Tor(\Gamma^{c}G)=\Gamma^{c}G/A\rightarrow H/A\rightarrow \Tor(G/\Gamma^{c}G)\rightarrow 1$ of groups. Both the kernel and the quotient are finite, so $H/A$ is finite. Since $A$ is torsion-free, $\Tor(H)\bigcap A=\{e\}$. Thus the quotient map $H\rightarrow H/A$ is injective on $\Tor(H)$. Therefore, $\Tor(H)$ is finite. Since $\Tor(G)\subset \Tor(H)$, the group $\Tor(G)$ is also finite.
\end{proof}

Let $Z_0(G)=0\subset Z_1(G)=Z(G)\subset ...$ be the upper central series of a group $G$ (\cite{Clement-Majewicz-Zyman-theory-nilpotent-groups}*{Definition 2.5}). If $G$ is a Hausdorff topological group, then its center $Z(G)$ is closed. An induction using $Z_{n+1}/Z_n(G)=Z(G/Z_n(G))$ shows that every $Z_n(G)$ is closed.

\begin{proposition}\label{Prop: the upper central series of torsion-free nilpotent group is torsion-free}
Let $G$ be a topologically finitely generated nilpotent pro-$p$ group. If $G$ is torsion free, then $Z_{n+1}(G)/Z_{n}(G)$ is torsion free.
\end{proposition}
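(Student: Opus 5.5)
The argument is purely group-theoretic; the topology plays no role beyond ensuring that each $Z_n(G)$ is closed, so that the quotients are again Hausdorff groups. The plan is the classical one for torsion-free nilpotent groups. I first prove the base case, that $G/Z(G)$ is torsion free, for an arbitrary torsion-free group $G$ (nilpotency is used only to make the induction terminate). Then I iterate using $Z_{n+1}(G)/Z_n(G)=Z(G/Z_n(G))$.

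For the base case, suppose $x\in G$ and $x^m\in Z(G)$ for some $m\geq 1$. Fix any $g\in G$ and put $y=gxg^{-1}$. Since $x^m$ is central,
\[
y^m=gx^mg^{-1}=x^m .
\]
I will show that in a torsion-free nilpotent group $m$-th roots are unique, so that $y^m=x^m$ forces $y=x$; hence $x$ commutes with every $g$, and $x\in Z(G)$. Uniqueness of roots is proved by induction on the nilpotency class of the subgroup $K=\langle x,y\rangle$. This subgroup is torsion free and nilpotent, and it has class at most that of $G$.

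The root-uniqueness step is where the work is, and I would do it as follows. Because $y^m=x^m$, the element $x^m$ commutes with $y$. Therefore $y^{-1}xy$ satisfies
\[
(y^{-1}xy)^m=y^{-1}x^my=x^m .
\]
Both $x$ and $y^{-1}xy$ lie in the subgroup $L=\langle x,\,[x,y]\rangle$, since $y^{-1}xy=x[x,y]$ (with the convention $[a,b]=a^{-1}b^{-1}ab$). Now $L$ is contained in $\langle x\rangle\Gamma^2K$, and a standard argument shows that $\langle x\rangle\Gamma^2K$ has strictly smaller class than $K$ when $K$ is generated by $x$ and $y$; this is Mal'cev's classical argument. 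Applying the induction hypothesis in $L$ gives $y^{-1}xy=x$, so $x$ and $y$ commute. Then $(xy^{-1})^m=x^my^{-m}=e$, and torsion-freeness gives $x=y$. The class-1 case is trivial because the group is abelian.

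Rather than re-derive this, I would simply invoke the known result for abstract nilpotent groups, \cite{Clement-Majewicz-Zyman-theory-nilpotent-groups}, that in a torsion-free nilpotent group $x^m=y^m$ implies $x=y$, and that the upper central factors are torsion free. I view that citation as the cleanest route and expect it to be the main ingredient.

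For the induction on $n$, I would like to apply the base case to $G/Z_n(G)$, whose center is exactly $Z_{n+1}(G)/Z_n(G)$. This requires $G/Z_n(G)$ to be torsion free, which follows inductively from torsion-freeness of each factor $Z_{k+1}(G)/Z_k(G)$ for $k<n$. Indeed, if $x^m\in Z_n(G)$, then the image of $x$ is torsion in $G/Z_{n-1}(G)$ modulo its center, so by the previous step $x\in Z_n(G)$.

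Precisely, I would prove the stronger statement that $G/Z_n(G)$ is torsion free for every $n$, by induction on $n$. The case $n=0$ is the hypothesis. For the inductive step, $G/Z_{n+1}(G)\cong (G/Z_n(G))/Z(G/Z_n(G))$, and the base case applied to the torsion-free nilpotent group $G/Z_n(G)$ shows this quotient is torsion free. The claim about $Z_{n+1}(G)/Z_n(G)$ then follows, because it is a subgroup of the torsion-free group $G/Z_n(G)$.
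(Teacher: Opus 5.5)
Your argument is correct, but it takes a different route from the paper. The paper never uses uniqueness of roots; it inducts on $n$ directly for the factors. Take $x\in Z_{n+1}(G)$ with $x^m\in Z_n(G)$ and any $g\in G$. The commutator $(x,g)$ lies in $Z_n(G)$, hence is central modulo $Z_{n-1}(G)$, so $(x^m,g)\equiv (x,g)^m \pmod{Z_{n-1}(G)}$. Since $x^m\in Z_n(G)$, we also have $(x^m,g)\in Z_{n-1}(G)$. Torsion-freeness of $Z_n(G)/Z_{n-1}(G)$ then forces $(x,g)\in Z_{n-1}(G)$ for all $g$, i.e.\ $x\in Z_n(G)$.

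That argument needs only a commutator identity. It uses neither nilpotency nor root extraction, so it proves the factor statement for every torsion-free group. Your route instead rests on the heavier lemma of uniqueness of roots in torsion-free nilpotent groups, with its own induction on class via $\langle x\rangle\Gamma^2K$. It yields that every $G/Z_n(G)$ is torsion free. For nilpotent $G$ this is equivalent to the factor statement anyway, since $G/Z_n(G)$ has a finite series with factors $Z_{k+1}(G)/Z_k(G)$, $k\geq n$, and an extension of torsion-free groups is torsion-free. So the real payoff of your approach is the root-uniqueness lemma itself, which is a standard tool in Mal'cev theory. The paper's proof is shorter and self-contained.

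One correction to your framing: the base case ``$G/Z(G)$ is torsion free'' is false for arbitrary torsion-free groups. In the Klein bottle group $\langle a,b\mid bab^{-1}=a^{-1}\rangle$ the center is $\langle b^2\rangle$, and $b$ has order $2$ modulo it. Your proof of the base case genuinely uses nilpotency through root uniqueness, so nothing breaks. However, the parenthetical claim that nilpotency only serves to terminate the induction should be dropped; it is the paper's factor-wise statement, not yours, that holds in that generality.

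Likewise, the sentence deriving torsion-freeness of $G/Z_n(G)$ from the factors with $k<n$ has the logic backwards. The argument you give under ``Precisely'' is the right one: apply the base case to the torsion-free nilpotent group $G/Z_n(G)$ to conclude that $G/Z_{n+1}(G)$ is torsion free.
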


\begin{proof}
For $n=0$, the proposition follows since $Z_1(G)$ is a subgroup of $G$. Assume inductively that $Z_{n}(G)/Z_{n-1}(G)$ is torsion free. Let $x\in Z_{n+1}(G)$, and suppose that $x^m\in Z_n(G)$ for some $m\geq 1$. By the construction of the upper central series, for every $g\in G$, the commutator $(x,g)\in Z_n(G)$. Since $x^m\in Z_n(G)$, we have $(x^m,g)\in Z_{n-1}(G)$. Since the image of $(x,g)$ is central in $G/Z_{n-1}(G)$, $(x^m,g)=(x,g)^m$ modulo $Z_{n-1}(G)$. Thus $(x,g)^m\in Z_{n-1}(G)$. By the inductive hypothesis, $Z_{n}(G)/Z_{n-1}(G)$ is torsion free, so $(x,g)\in Z_{n-1}(G)$. Since this holds for every $g\in G$, we obtain $x\in Z_{n}(G)$. Thus, $Z_{n+1}(G)/Z_n(G)$ is torsion-free.
\end{proof}

\begin{definition}\label{Def: Malcev basis}
Let $G$ be a topologically finitely generated nilpotent torsion-free pro-$p$ group, and let $Z_*(G)$ denote its upper central series. For each $i$, choose elements $x_{i1},...,x_{ir_i}$ of $Z_{i}(G)$ whose images form a $\widehat{\ZZ}_p$-basis of $Z_{i}(G)/Z_{i-1} (G)$. The resulting ordered collection of these elements is called a \textbf{Mal'cev basis} for $G$.
\end{definition}

\begin{proposition}\label{Prop: each torsion free nilpotent pro-$p$ group is analytic}
Let $G$ be a topologically finitely generated nilpotent pro-$p$ torsion-free group, and let $\{v_1,...,v_r\}$ be a Mal'cev basis. Then the map $\phi_v:\widehat{\ZZ}_p^r\rightarrow G$ given by $(\lambda_1,...,\lambda_r)\rightarrow v_1^{\lambda_1}...v_r^{\lambda_r}$, defined as in \cite{Dixon-duSautoy-Mann-Segal-analytic-pro-p-groups}*{Proposition 1.26}, is an analytic homeomorphism.    
\end{proposition}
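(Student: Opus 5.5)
We prove bijectivity of $\phi_v$ by induction on the length of the upper central series, then deduce that it is a homeomorphism from compactness, and finally obtain analyticity from the analytic structure on $G$.

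First, $\phi_v$ is well defined and continuous. For $v\in G$ and $\lambda\in\widehat{\ZZ}_p$, the power $v^\lambda$ is defined as in \cite{Dixon-duSautoy-Mann-Segal-analytic-pro-p-groups}*{Proposition 1.26}: it is the limit of $v^{a_n}$ for integers $a_n\to\lambda$. The map $\lambda\mapsto v^\lambda$ is continuous, since $G$ is pro-$p$ and hence $\langle v\rangle$ is a pro-$p$ quotient of $\widehat{\ZZ}_p$. So $\phi_v$ is a finite product of continuous maps. Because $\widehat{\ZZ}_p^r$ is compact and $G$ is Hausdorff, bijectivity of $\phi_v$ will give that it is a homeomorphism.

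Bijectivity goes by induction on the length $c$ of the upper central series. This series terminates, since $G$ is nilpotent and $\Gamma^{c+1}G=1$ forces $Z_c(G)=G$. Order the basis with the $Z_1$ elements last, or equivalently with the elements of $G/Z_{c-1}$ first. Set $\bar G=G/Z_1(G)$. By Proposition \ref{Prop: the upper central series of torsion-free nilpotent group is torsion-free}, $\bar G$ is again torsion-free, topologically finitely generated and nilpotent. Its upper central series is $Z_{i+1}(G)/Z_1(G)$, so the images of the remaining $v_j$ form a Mal'cev basis of $\bar G$. By induction, every $\bar g\in\bar G$ is uniquely of the form $\prod \bar v_j^{\lambda_j}$. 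Lifting, $g=\prod v_j^{\lambda_j}\cdot z$ with $z\in Z_1(G)$. Now $Z_1(G)$ is an abelian torsion-free finitely generated pro-$p$ group, hence a free $\widehat{\ZZ}_p$-module with basis the chosen $x_{1j}$. So $z$ is uniquely $\prod x_{1j}^{\mu_j}$, which gives existence and uniqueness. The one point to check is that $(v^\lambda)$ reduces to $\bar v^\lambda$ modulo $Z_1$; this holds by continuity of the power map. If the paper's ordering is the reverse, with the $Z_1$ elements first, the same argument works: $Z_1$ is central, so the factor $z$ can be moved to the front.

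For analyticity, $G$ is $p$-adic analytic by the preceding proposition, and multiplication is analytic. It therefore suffices to show that each $\lambda\mapsto v^\lambda$ is analytic from $\widehat{\ZZ}_p$ to $G$. For this I would pass to an open uniform subgroup $U$ of $G$ (\cite{Dixon-duSautoy-Mann-Segal-analytic-pro-p-groups}, Theorem 8.32/9.x). There are two cases.

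\begin{itemize}
\item For $u\in U$, in the coordinates of the second kind on $U$, the map $\lambda\mapsto u^\lambda$ is analytic; indeed it is linear in the Lie algebra coordinates, since $u^\lambda=\exp(\lambda\log u)$.
\item In general, $v^{p^k}\in U$ for some $k$. On each coset $a+p^k\widehat{\ZZ}_p$ we have $v^\lambda=v^a\,(v^{p^k})^{(\lambda-a)/p^k}$, which is analytic.
\end{itemize}

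Hence $\phi_v$ is an analytic bijective homeomorphism. If ``analytic homeomorphism'' is meant to include an analytic inverse, I would expect this to be the main obstacle. The plan there is to show that the differential of $\phi_v$ at $0$ is invertible, so that the analytic inverse function theorem applies locally; the statement then globalizes by translating, using the group structure and the inductive decomposition. Alternatively, one can argue inductively that each coordinate $\lambda_j(g)$ is analytic in $g$, obtaining it from the analytic projections $G\to\bar G$ together with the analytic map $g\mapsto z\in Z_1\cong\widehat{\ZZ}_p^{r_1}$.
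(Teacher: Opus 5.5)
Your proposal is correct and follows the same route as the paper. The paper also argues bijectivity from the Mal'cev decomposition, continuity via Proposition 1.26 of Dixon--du Sautoy--Mann--Segal, homeomorphism by compactness of $\widehat{\ZZ}_p^r$ and Hausdorffness of $G$, and analyticity by factoring $\phi_v$ through $\prod_i(\lambda\mapsto v_i^{\lambda})$ followed by multiplication. The one difference is that the paper gets analyticity of each $\lambda\mapsto v_i^{\lambda}$ directly by noting it is a continuous homomorphism $\widehat{\ZZ}_p\to G$ and citing Theorem 9.4 of Dixon--du Sautoy--Mann--Segal, so your uniform-subgroup argument can be replaced by that citation; the paper does not address analyticity of $\phi_v^{-1}$ at all, so your final paragraph goes beyond what the paper's proof covers.
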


\begin{proof}
The Mal'cev-coordinate decomposition shows that $\phi_v$ is bijective.  By \cite{Dixon-duSautoy-Mann-Segal-analytic-pro-p-groups}*{Proposition 1.26}, $\phi_v$ is continuous. Since $\widehat{\ZZ}_p^r$ is compact and $G$ is Hausdorff, $\phi_v$ is a homeomorphism. For each $i$, the map $\phi_i:\widehat{\ZZ}_p\rightarrow G$ given by $\phi_i(\lambda)=v_i^{\lambda}$ is a continuous homomorphism, and is therefore analytic by \cite{Dixon-duSautoy-Mann-Segal-analytic-pro-p-groups}*{Theorem 9.4}. The factorization $\widehat{\ZZ}_p^r\xrightarrow{\prod_i \phi_i}G^r\xrightarrow{\mu} G$ of $\phi_v$ shows that $\phi_v$ is analytic, where $\mu$ is multiplication.
\end{proof}

Thus, every Mal'cev basis $v_1,...,v_r$ determines a global analytic chart $\phi_v$ on $G$. In this chart, the multiplication on $G$ is represented by a formal group law $F_v=(F_{v,1},...,F_{v,r})$ with $F_{v,i}\in\QQ_p[[x_1,...,x_r,y_1,...,y_r]]$ for every $i$ (\cite{Serre-Lie-algebra-Lie-group}*{Part II, Section IV.6}), characterized by $\phi_v\circ F_v(\alpha,\beta)=\phi_v(\alpha)\cdot \phi_v(\beta)$.

\begin{proposition}\label{Prop: the formal group law of nilpotent group is polynomial}
Let $G$ be a topologically finitely generated nilpotent torsion-free pro-$p$ group, and let $\{v_1,...,v_r\}$ be a Mal'cev basis. Then every component $F_{v,i}$ of the associated formal group law $F_v$ is a polynomial.    
\end{proposition}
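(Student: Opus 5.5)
We prove polynomiality by induction on the nilpotency class $c$ of $G$, following the classical argument of Hall and Mal'cev for finitely generated torsion-free nilpotent groups, with $\ZZ$ replaced by $\widehat{\ZZ}_p$. The ordering of the Mal'cev basis in Definition \ref{Def: Malcev basis} makes the central part come first. So we reorder and write the chart as a product $v_1^{\lambda_1}\cdots v_r^{\lambda_r}$, where the last block consists of the lifts of a basis of $G/Z_{c-1}(G)$ and the first block of the basis of $Z_1(G)$. The argument is insensitive to which end of the product is the center, provided the center is handled last.

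\textbf{Inductive step.} Put $\bar G=G/Z(G)$. By Proposition \ref{Prop: the upper central series of torsion-free nilpotent group is torsion-free}, $Z(G)$ and $\bar G$ are torsion-free, and the images of the non-central basis elements form a Mal'cev basis of $\bar G$ (note $Z_i(\bar G)=Z_{i+1}(G)/Z(G)$). So write $\phi_v(\lambda)=z(\lambda')\cdot s(\lambda'')$. Here $z(\lambda')=\prod v_i^{\lambda_i}$ runs over the central block, which is isomorphic to $\widehat{\ZZ}_p^{r_1}$ as a group. The map $s$ is the ordered product over the remaining block; it is a continuous section of $G\to \bar G$.

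Then
\[
\phi_v(\alpha)\phi_v(\beta)=z(\alpha'+\beta')\, s(\alpha'')s(\beta'') = z(\alpha'+\beta')\, z(f(\alpha'',\beta''))\, s(\bar F(\alpha'',\beta'')).
\]
Here $\bar F$ is the group law of $\bar G$, which is polynomial by induction. The map $f:\widehat{\ZZ}_p^{r-r_1}\times\widehat{\ZZ}_p^{r-r_1}\to Z(G)\cong\widehat{\ZZ}_p^{r_1}$ is the continuous factor set (2-cocycle) of the central extension relative to $s$. It remains to show that $f$ is polynomial.

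\textbf{Polynomiality of the cocycle $f$.} This is the main obstacle. First compute $s(\alpha'')s(\beta'')$ by collecting terms as in the Hall collection process. One moves each $v_j^{\beta_j}$ leftward past the $v_i^{\alpha_i}$ with $i>j$, which uses the commutator identities $v_i^{a}v_j^{b}=v_j^{b}v_i^{a}\,w$ with $w$ deeper in the series. The exponents produced are integer-valued polynomials, that is, combinations of binomial coefficients $\binom{a}{k}$, in the integer exponents.

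Restrict first to $\alpha,\beta\in\ZZ_{\ge 0}^{r}$, or $\ZZ^r$, which lies in the dense subgroup generated by the $v_i$. There the classical Hall theorem applies to the discrete torsion-free nilpotent group generated by $v_1,\dots,v_r$, whose Mal'cev coordinates are these same coordinates. It gives polynomials $P_i\in\QQ[x,y]$ with $F_{v,i}(\alpha,\beta)=P_i(\alpha,\beta)$ on $\ZZ^{2r}$, of bounded degree.

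Both sides are continuous on $\widehat{\ZZ}_p^{2r}$. The maps $\lambda\mapsto v^\lambda$ are continuous by \cite{Dixon-duSautoy-Mann-Segal-analytic-pro-p-groups}*{Proposition 1.26}, and $\phi_v$ is a homeomorphism by Proposition \ref{Prop: each torsion free nilpotent pro-$p$ group is analytic}. Integer-valued polynomials are continuous for the $p$-adic topology, and $\ZZ^{2r}$ is dense in $\widehat{\ZZ}_p^{2r}$. Hence $F_{v,i}=P_i$ as functions on $\widehat{\ZZ}_p^{2r}$. By the identity theorem for convergent power series on $\widehat{\ZZ}_p^{2r}$, they agree as power series, since a power series vanishing on an open set vanishes.

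The delicate point is whether the finitely generated discrete group $\langle v_1,\dots,v_r\rangle$ admits $v_1,\dots,v_r$ as a Mal'cev basis with integer coordinates. Its upper central series need not match that of $G$. To avoid this issue, I would instead run the collection argument directly inside $G$ using the induction above: the cocycle $f$ satisfies the cocycle identity and is determined by the commutator values $(v_i^{a},v_j^{b})$. Each such value is polynomial in $(a,b)$ by a nested induction along the central series, since in class-2 quotients it is bilinear. Density and continuity then again upgrade the identities from integers to $\widehat{\ZZ}_p$.
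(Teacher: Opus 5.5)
\textbf{There is a genuine gap: the polynomiality of the cocycle $f$, which is the whole content of the proposition, is never actually proved.}

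Your reduction to the central extension is fine. With $z$ and $s$ as you define them, $F_v(\alpha,\beta)=(\alpha'+\beta'+f(\alpha'',\beta''),\,\bar F(\alpha'',\beta''))$, so the proposition is equivalent to polynomiality of $f$. Neither of your arguments establishes this.

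Your first argument is not merely delicate; it fails. The Mal'cev coordinates of the commutators $(v_i,v_j)$ with respect to $v$ are arbitrary elements of $\widehat{\ZZ}_p$. Consequently the integer-coordinate elements need not form a subgroup, $v$ need not be a Mal'cev basis of the discrete group $\langle v_1,\dots,v_r\rangle$, and $F_v$ need not have rational coefficients. Here is an example. In the Heisenberg group over $\widehat{\ZZ}_p$ with $yx=xyz$ and $z$ central, take $v_1=z^{u}$ with $u\in\widehat{\ZZ}_p^{\times}\setminus\QQ$, $v_2=x$, $v_3=y$. Then $F_{v,1}(\alpha,\beta)=\alpha_1+\beta_1+u^{-1}\alpha_3\beta_2$. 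The centre of $\langle v_1,v_2,v_3\rangle$ is $\langle v_1,z\rangle$, which is free abelian of rank $2$ because $u\notin\QQ$. So this discrete group has Hirsch length $4$, and Hall's theorem applied to it says nothing about $F_v$.

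Your fallback is only a sketch, and the mechanism it names is not enough.
\begin{itemize}
\item The cocycle identity cannot force polynomiality. Changing $s$ by an arbitrary continuous map into $Z(G)$ changes $f$ by a coboundary, so polynomiality can only come from the special shape of $s$.
\item Bilinearity of $(v_i^{a},v_j^{b})$ holds only modulo the next term of the series. The corrections in deeper layers involve further commutators raised to binomial-coefficient powers in $a,b$. Rewriting them, and finishing the collection, in Mal'cev coordinates requires that both multiplication and the power map $(g,\gamma)\mapsto g^{\gamma}$ be polynomial on the deeper subgroups.
\item The power map is not in your inductive hypothesis. You also cannot borrow Lemma \ref{Lem: power map of nilpotent group is a polynomial}, because the paper deduces it from this proposition.
\end{itemize}

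A correct Hall-style proof has two requirements.
\begin{itemize}
\item It must carry multiplication and powers through the induction together. It should be stated for bases adapted to an arbitrary central series with $\widehat{\ZZ}_p$-free factors, since $v$ restricted to $Z_k(G)$ need not be a Mal'cev basis in the sense of Definition \ref{Def: Malcev basis}.
\item Its key input is that the $a$-th iterate of a unipotent triangular polynomial map is polynomial in $a$ and the coordinates. An example of such a map is conjugation by the last basis element on the normal subgroup of elements whose last coordinate is $0$. The lemma is proved by summing polynomials over the iteration count.
\end{itemize}

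The paper avoids collection altogether. It uses that $\Lie(G)$ is nilpotent (Corollary \ref{Cor: Lie algebra of nilpotent group is nilpotent}), so the BCH law terminates. Strictly, $F_v$ equals the BCH law only after changing to coordinates of the first kind, $\psi(\lambda)=\mathrm{BCH}(\lambda_1X_1,\dots,\lambda_rX_r)$, where $X_i$ is the tangent vector of $\lambda\mapsto v_i^{\lambda}$. This $\psi$ is polynomial and triangular for the filtration by $\Lie(Z_k(G))$, hence has a polynomial inverse, and $F_v=\psi^{-1}\circ\mathrm{BCH}\circ(\psi\times\psi)$. That is the most efficient way to close your gap.
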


\begin{proof}
Write $X=(x_1,...,x_r)$ and $Y=(y_1,...,y_r)$, and let $B(X,Y)$ denote the quadratic homogeneous part of $F_v(X,Y)$.
The chart $\phi_v$ identifies $\Lie(G)$ with $\QQ_p^r$, under which the Lie bracket is given by $[X,Y]=B(X,Y)-B(Y,X)$ (\cite{Serre-Lie-algebra-Lie-group}*{Part II, Section V.1}). By \cite{Serre-Lie-algebra-Lie-group}*{Part II, Section V.4}, $F_v$ coincides with the Baker-Campbell-Hausdorff formula associated to this Lie bracket. Since $\Lie(G)$ is nilpotent by Corollary \ref{Cor: Lie algebra of nilpotent group is nilpotent}, its Baker-Campbell-Hausdorff series terminates after finitely many terms. This completes the proof.
\end{proof}

\begin{lemma}\label{Lem: power map of nilpotent group is a polynomial}
Let $G$ be a topologically finitely generated nilpotent torsion-free pro-$p$ group, and let $\{v_1,...,v_r\}$ be a Mal'cev basis. Then there exist polynomials $g_1,...,g_r\in \QQ_p[z,x_1,...,x_r]$ such that $(v_1^{\alpha_1}\cdot ...\cdot v_r^{\alpha_r})^{\gamma}=v_1^{g_1(\gamma,\alpha_1,...,\alpha_r)}
\cdot ...\cdot v_r^{g_r(\gamma,\alpha_1,...,\alpha_r)}$ for all $\gamma,\alpha_1,...,\alpha_r\in \widehat{\ZZ}_p$.
\end{lemma}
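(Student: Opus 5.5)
We prove the lemma in two stages. First we handle nonnegative integer exponents $\gamma=n\in\ZZ_{\geq 0}$. Then we pass to all $\gamma\in\widehat{\ZZ}_p$ by a density and continuity argument.

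For integers, Proposition \ref{Prop: the formal group law of nilpotent group is polynomial} gives polynomials $F_{v,1},\dots,F_{v,r}$ with $\phi_v(\alpha)\phi_v(\beta)=\phi_v(F_v(\alpha,\beta))$. Iterating, the coordinates of $\phi_v(\alpha)^n$ are $F_v^{(n)}(\alpha)$, where $F_v^{(n)}(\alpha)=F_v(F_v^{(n-1)}(\alpha),\alpha)$. Each of these is a polynomial in $\alpha$, but its degree a priori grows with $n$. So the real work is to show that the coordinates of $\phi_v(\alpha)^n$ are polynomial jointly in $(n,\alpha)$, with degree in $n$ bounded independently of $n$.

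We do this through the triangular structure of the Mal'cev basis. Order the basis so that $v_1,\dots$ span the top layer of the upper central series (the coordinates of $G/Z_{c-1}(G)$) and the last block spans $Z_1(G)$. Then $F_{v,i}(X,Y)=x_i+y_i+P_i(x_1,\dots,x_{i-1},y_1,\dots,y_{i-1})$, where $P_i$ depends only on earlier coordinates. This holds because projecting to $G/Z_{j}(G)$ is a homomorphism compatible with the chart, and in the abelian layers coordinates add.

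Writing $a^{(n)}=F_v^{(n)}(\alpha)$, we get the recursion
\[
a^{(n+1)}_i=a^{(n)}_i+\alpha_i+P_i\bigl(a^{(n)}_{<i},\alpha_{<i}\bigr).
\]
We argue by induction on $i$. Suppose $a^{(n)}_j$ is already a polynomial in $(n,\alpha)$ for all $j<i$. Then $P_i(a^{(n)}_{<i},\alpha_{<i})$ is a polynomial $q_i(n,\alpha)$. Hence $a^{(n)}_i=n\alpha_i+\sum_{m=0}^{n-1}q_i(m,\alpha)$. By the standard discrete-summation fact (sums $\sum_{m<n}m^k$ are polynomials in $n$ over $\QQ$), this is a polynomial $g_i(n,\alpha)\in\QQ_p[z,x_1,\dots,x_r]$. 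This inductive bookkeeping through the triangular form is the main technical point. Its key input is the claim that $P_i$ depends only on strictly earlier coordinates, which we check via the upper central series layers as above.

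Finally we extend from $n\in\ZZ_{\geq0}$ to $\gamma\in\widehat{\ZZ}_p$. For fixed $\alpha$, the map $\gamma\mapsto\phi_v(\alpha)^\gamma$ is continuous $\widehat{\ZZ}_p\to G$ (\cite{Dixon-duSautoy-Mann-Segal-analytic-pro-p-groups}*{Proposition 1.26}), and $\phi_v^{-1}$ is continuous by Proposition \ref{Prop: each torsion free nilpotent pro-$p$ group is analytic}. So both sides of the claimed identity are continuous in $\gamma$, and polynomials are continuous on $\widehat{\ZZ}_p$. Since they agree on $\ZZ_{\geq0}$, which is dense in $\widehat{\ZZ}_p$, and $G$ is Hausdorff, they agree for all $\gamma$. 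This proves the lemma.
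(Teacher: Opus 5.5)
Your proof is correct, but it takes a different route from the paper's. The paper inducts on the nilpotency class $c$. It applies the Hall--Petresco identity to $x_j=v_j^{\alpha_j}$ to get, for positive integers $n$,
\[
(v_1^{\alpha_1}\cdots v_r^{\alpha_r})^{n}=v_1^{\alpha_1 n}\cdots v_r^{\alpha_r n}\,u_c^{-\binom{n}{c}}\cdots u_2^{-\binom{n}{2}},\qquad u_k=\tau_k(v_1^{\alpha_1},\dots,v_r^{\alpha_r})\in Z_{c-1}(G).
\]
It reads off polynomial coordinates of the $u_k$ from the polynomial group law, handles the powers $u_k^{-\binom{n}{k}}$ by the inductive hypothesis for $Z_{c-1}(G)$, and then extends to $\widehat{\ZZ}_p$ by density, as you do.

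You replace the Hall--Petresco words and the induction on class with three ingredients: the triangular shape $F_{v,i}=x_i+y_i+P_i(x_{<i},y_{<i})$ of the group law, an induction on the coordinate index, and Faulhaber's formula. Your route is more elementary and self-contained. In particular, it never needs the lemma for the subgroup $Z_{c-1}(G)$. There the paper's basis $v_1,\dots,v_s$ is adapted to $Z_1(G)\subset\cdots\subset Z_{c-1}(G)$, which need not be the upper central series of $Z_{c-1}(G)$ itself, so the paper tacitly uses a slightly more general form of the lemma. The paper's route, in exchange, makes the $\gamma$-dependence transparent: $\gamma$ enters only through $\binom{\gamma}{k}$ with $k\le c$ and through lower-class power maps.

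Two small precisions for your write-up. First, the triangular form uses that $Z_{j+1}(G)/Z_j(G)$ is \emph{central} in $G/Z_j(G)$, not merely abelian. Centrality is what lets the layer-$(j+1)$ factor of $\phi_v(\alpha)$ move past the higher-layer factor of $\phi_v(\beta)$ modulo $Z_j(G)$. Second, the statement depends on the order of the basis, so your reordering is not literally free. However, the same argument shows $P_i$ depends only on coordinates in strictly higher layers in the bottom-first order too. So no reordering is needed: just run your induction over layers from the top.
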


\begin{proof}
Let $c$ be the nilpotency class of $G$. When $c=1$, the group $G$ is abelian, and the lemma is immediate. Assume inductively that this lemma holds for all nilpotency classes less than $c$. It suffices to derive the required formula for positive integers $\gamma$. Once the polynomials $g_i$ have been obtained, continuity and the density of $\ZZ_{>0}$ in $\widehat{\ZZ}_p$ extend the result to $\widehat{\ZZ}_p$. Recall the Hall-Petresco words $\tau_2,\tau_3,...$ in variables $y_1,...,y_r$ (\cite{Clement-Majewicz-Zyman-theory-nilpotent-groups}*{Section 4.1.3}). For every integer $k\geq 2$ and all $x_1,...,x_r$ in a group $H$, $x_1^k...x_r^k=(x_1...x_r)^k\cdot \tau_2(x_1,...,x_r)^{\binom{k}{2}}...\cdot \tau_k(x_1,...,x_r)^{\binom{k}{k}}$, with $\tau_j(x_1,...,x_r)\in \Gamma^jH\subset Z_{c+1-j}(H)$.

After ordering the Mal'cev basis appropriately, we may assume that $v_1,...,v_s$ for some $s<r$ form a Mal'cev basis for $Z_{c-1} (G)$. Set $u_k=\tau_k(v_1^{\alpha_1},...,v_r^{\alpha_r})$. Since $u_k\in Z_{c-1} (G)$, it has a unique expression $u_k=v_1^{\alpha_{k,1}}...v_s^{\alpha_{k,s}}$. Proposition \ref{Prop: the formal group law of nilpotent group is polynomial} implies that each coordinate $\alpha_{k,i}$ is a polynomial in $\alpha_1,...,\alpha_r$. Since $\Gamma^{c+1}G=0$, we have $u_k=e$ for every $k>c$. Then $(v_1^{\alpha_1}...v_r^{\alpha_r})^{\gamma}=v_1^{\alpha_1\gamma}...v_r^{\alpha_r\gamma}\cdot u_{c}^{-\binom{\gamma}{c}}\cdot ...\cdot u_{2}^{-\binom{\gamma}{2}}$. For each $k$, write $u_{k}^{-\binom{\gamma}{k}}=v_1^{\beta_{k,1}}...v_s^{\beta_{k,s}}$. By the inductive hypothesis, each $\beta_{k,i}$ is a polynomial in $\gamma,\alpha_1,...,\alpha_r$. Combining these expressions using the polynomial group law of Proposition \ref{Prop: the formal group law of nilpotent group is polynomial} shows that each resulting coordinate $g_i(\gamma,\alpha_1,...,\alpha_r)$ is a polynomial.
\end{proof}

\begin{proposition}\label{Prop: Malcev completion for torsion-free nilpotent group}
Let $G$ be a topologically finitely generated nilpotent torsion-free pro-$p$ group, and let $\{v_1,...,v_r\}$ be a Mal'cev basis. Let $F_v$ be the associated formal group law. Then $F_v$ defines a $p$-adic analytic group structure on $\QQ^r_p$. In particular, the composition of analytic maps $G\xrightarrow{\phi_v^{-1}} (\widehat{\ZZ}^r_p,F_v)\hookrightarrow U_{G,v}:=(\QQ^r_p,F_v)$ is a group homomorphism.
\end{proposition}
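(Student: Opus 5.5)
We have to show that the polynomial map $F_v:\QQ_p^r\times\QQ_p^r\rightarrow\QQ_p^r$ of Proposition \ref{Prop: the formal group law of nilpotent group is polynomial} is a group law on $\QQ_p^r$. The strategy is to verify the group axioms as polynomial identities, by density of $\widehat{\ZZ}_p^r$ in $\QQ_p^r$ for the Zariski topology. Analyticity will then be automatic, and the homomorphism statement will be a tautology.

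First, the \textbf{associativity} and \textbf{unit} axioms. Since $\phi_v$ is a bijection with $\phi_v\circ F_v(\alpha,\beta)=\phi_v(\alpha)\phi_v(\beta)$, the polynomial identities
\[F_v(F_v(X,Y),Z)=F_v(X,F_v(Y,Z)),\qquad F_v(X,0)=X=F_v(0,X)\]
hold for all $X,Y,Z\in\widehat{\ZZ}_p^r$. Here $\phi_v(0)=e$, and $F_v$ maps $\widehat{\ZZ}_p^r\times\widehat{\ZZ}_p^r$ into $\widehat{\ZZ}_p^r$. A polynomial over $\QQ_p$ vanishing on $\widehat{\ZZ}_p^{N}$ is zero, since $\widehat{\ZZ}_p$ is infinite; this follows by induction on the number of variables. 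Hence these identities hold in the polynomial ring, and therefore on all of $\QQ_p^r$.

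Next, \textbf{inverses}. Here I plan to use Lemma \ref{Lem: power map of nilpotent group is a polynomial} with $\gamma=-1$. Put $\iota(X)=(g_1(-1,X),\dots,g_r(-1,X))$. This is a polynomial map with $\phi_v(\iota(\alpha))=\phi_v(\alpha)^{-1}$ for $\alpha\in\widehat{\ZZ}_p^r$. As before, $F_v(X,\iota(X))=0=F_v(\iota(X),X)$ holds on $\widehat{\ZZ}_p^r$, hence identically. Alternatively, one could use BCH, where the inverse is $X\mapsto -X$ in logarithmic coordinates. The paper's coordinates are Mal'cev coordinates, though, so the power-map lemma is the cleaner tool.

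Thus $(\QQ_p^r,F_v,\iota)$ is a group whose multiplication and inversion are polynomial, hence analytic, on the analytic manifold $\QQ_p^r$. So it is a $p$-adic analytic group, indeed the $\QQ_p$-points of a unipotent algebraic group structure on $\mathbb{A}^r$. The inclusion $(\widehat{\ZZ}_p^r,F_v)\hookrightarrow(\QQ_p^r,F_v)$ is a homomorphism because the law on $\widehat{\ZZ}_p^r$ is the restriction of $F_v$. The map $\phi_v^{-1}$ is a group isomorphism by the defining relation of $F_v$, and it is analytic, being the inverse of the analytic homeomorphism of Proposition \ref{Prop: each torsion free nilpotent pro-$p$ group is analytic}. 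For this I would cite the analytic inverse function theorem, or the fact that continuous homomorphisms of $p$-adic analytic groups are analytic (\cite{Dixon-duSautoy-Mann-Segal-analytic-pro-p-groups}*{Theorem 9.4}).

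The only step needing care is that the identity holding on $\widehat{\ZZ}_p^r$ gives equality of polynomials. This is the standard Zariski-density argument; the rest is formal.
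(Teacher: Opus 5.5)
Your proposal is correct and follows the paper's route. The paper's proof just cites the polynomiality of $F_v$ (Proposition \ref{Prop: the formal group law of nilpotent group is polynomial}) and the polynomial power map (Lemma \ref{Lem: power map of nilpotent group is a polynomial}); you supply what it leaves implicit, namely extending the associativity, unit and inverse identities (the last via $\gamma=-1$) from $\widehat{\ZZ}_p^r$ to $\QQ_p^r$ by Zariski density. For the analyticity of $\phi_v^{-1}$, use the fact that continuous homomorphisms of $p$-adic analytic groups are analytic, rather than the inverse function theorem, since the latter would require checking that the differential of $\phi_v$ is invertible.
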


\begin{proof}
This follows directly from Proposition \ref{Prop: the formal group law of nilpotent group is polynomial} and Lemma \ref{Lem: power map of nilpotent group is a polynomial}.
\end{proof}

\subsection{Equivalence to Continuous Mal'cev Completion}\,

In this part, we show that the group $U_{G,v}$ constructed in Proposition \ref{Prop: Malcev completion for torsion-free nilpotent group} is the continuous Mal'cev $\QQ_p$-completion of a topologically finitely generated nilpotent pro-$p$ group $G$ (Proposition \ref{Prop: Malcev's original completion is unipotent completion}). Using the lower central series, this construction leads to the continuous Mal'cev $\QQ_p$-completion of the non-nilpotent case (Theorem \ref{Thm: equivalence between malcev completion and formal group law}).

\begin{proposition}\label{Prop: Malcev's original completion is unipotent completion}
Let $G$ be a topologically finitely generated nilpotent pro-$p$ group. Then 
\begin{enumerate}[leftmargin=0.25in]
    \item the canonical Mal'cev map $G\rightarrow (G\widehat{\otimes}\QQ_p)(\QQ_p)$ of Definition \ref{Fact: definition of continuous Malcev completion} is analytic;
    \item Let $v=(v_1,...,v_r)$ be a Mal'cev basis of $G/\Tor(G)$. Then the canonical homomorphism $G\rightarrow U_{G/\Tor(G),v}$ realizes $U_{G/\Tor(G),v}$ as
    the continuous Mal'cev $\QQ_p$-completion of $G$;
    \item the induced morphism $\Lie(G)\rightarrow \Lie(G\widehat{\otimes}\QQ_p)$ is an isomorphism of Lie algebras.
\end{enumerate}
\end{proposition}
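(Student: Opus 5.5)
The plan is to prove (2) first, since (1) and (3) follow from it. Write $\bar G=G/\Tor(G)$. By the preceding lemma, $\Tor(G)$ is a finite normal subgroup. For any pro-unipotent $U$ over $\QQ_p$, the group $U(\QQ_p)$ is torsion-free: it is an inverse limit of groups $U_i(\QQ_p)$, each of which is identified with a nilpotent $\QQ_p$-Lie algebra via $\log$, and in that identification $x^m$ corresponds to $m\log x$. Hence every continuous homomorphism $G\to U(\QQ_p)$ kills $\Tor(G)$, and $\Hom_{\cont}(G,U(\QQ_p))=\Hom_{\cont}(\bar G,U(\QQ_p))$. So I may assume $G$ is torsion-free with Mal'cev basis $v$.

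By Proposition \ref{Prop: the formal group law of nilpotent group is polynomial}, $F_v$ is a polynomial group law on $\mathbb{A}^r$ with inverse polynomial given by Lemma \ref{Lem: power map of nilpotent group is a polynomial} at $\gamma=-1$. Thus $U_{G,v}$ is the $\QQ_p$-points of an algebraic group $\mathbb{U}_v$. Since $F_v$ is the BCH law of the nilpotent Lie algebra $\Lie(G)$ (Corollary \ref{Cor: Lie algebra of nilpotent group is nilpotent}), $\mathbb{U}_v$ is the unipotent group $\exp(\Lie(G))$.

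For the universal property, let $f:G\to U(\QQ_p)$ be continuous with $U$ unipotent; this is enough by Proposition \ref{Prop: Malcev completion is the inverse limit of Malcev completion of lower central series} and passage to limits. For uniqueness, suppose two algebraic extensions $\mathbb{U}_v\to U$ agree on $\phi_v^{-1}(G)=\widehat{\ZZ}_p^r$. Then they agree everywhere, because $\widehat{\ZZ}_p^r$ is Zariski dense in $\mathbb{A}^r$; indeed a polynomial vanishing on $\widehat{\ZZ}_p^r$ is zero. For existence, I compose $f$ with $\log:U(\QQ_p)\to\Lie(U)$. The continuous homomorphism $f$ is analytic by \cite{Dixon-duSautoy-Mann-Segal-analytic-pro-p-groups}*{Theorem 9.4}, and its differential $df:\Lie(G)\to\Lie(U)$ is a Lie algebra map. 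I then define $\tilde f=\exp\circ\, df\circ\log_{\mathbb{U}_v}:\mathbb{U}_v\to U$. This is a morphism of algebraic groups by BCH functoriality, since $\log$ and $\exp$ are polynomial on unipotent groups.

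The main obstacle is showing that $\tilde f$ restricted to $G$ equals $f$. My approach is to check it on each one-parameter subgroup $\lambda\mapsto v_i^\lambda$. Both $\tilde f(v_i^\lambda)$ and $f(v_i^\lambda)$ are continuous homomorphisms $\widehat{\ZZ}_p\to U(\QQ_p)$. Under $\log$, a continuous homomorphism $\widehat{\ZZ}_p\to U(\QQ_p)$ is $\lambda\mapsto\exp(\lambda X)$, with $X$ determined by the derivative at $0$. Both derivatives equal $df(\log v_i)$, so the two maps agree. Since $\phi_v(\lambda)=\prod v_i^{\lambda_i}$ and both $\tilde f$ and $f$ are homomorphisms, they agree on all of $G$. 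This step is where the identification of $\Lie(G)$ with the tangent space of the chart $\phi_v$, and the fact that $\log v_i^\lambda=\lambda\log v_i$, need care. These points follow from Serre's correspondence, already used in Proposition \ref{Prop: the formal group law of nilpotent group is polynomial}.

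With (2) established, $G\to(G\widehat{\otimes}\QQ_p)(\QQ_p)=U_{G/\Tor(G),v}$ is the composite of the analytic quotient $G\to G/\Tor(G)$ with the chart inclusion $\phi_v^{-1}:\widehat{\ZZ}_p^r\hookrightarrow\QQ_p^r$. This composite is analytic, which gives (1). For (3), $G\to G/\Tor(G)$ has finite kernel, so it is a local analytic isomorphism and induces an isomorphism on Lie algebras. Moreover, $\widehat{\ZZ}_p^r$ is open in $\QQ_p^r$ with the same group law $F_v$, so the inclusion is an isomorphism on tangent spaces. Finally, $\Lie(G\widehat{\otimes}\QQ_p)\cong T_e$ by the lemma on affine algebraic groups, so the induced morphism $\Lie(G)\to\Lie(G\widehat{\otimes}\QQ_p)$ is an isomorphism.
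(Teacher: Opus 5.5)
Your proposal is correct and follows essentially the same route as the paper: you kill $\Tor(G)$ because $U(\QQ_p)$ is torsion-free, recognize $U_{G,v}$ as the $\QQ_p$-points of a unipotent group via the polynomial BCH law, and produce the factorization by integrating the differential $df$ through the Lie-algebra/unipotent-group correspondence. Your additions only make explicit steps the paper leaves implicit: uniqueness via Zariski density of $\widehat{\ZZ}_p^r$, the agreement check on the one-parameter subgroups $\lambda\mapsto v_i^{\lambda}$ where the paper just says ``BCH shows,'' and deducing (1) from (2) rather than citing \cite{Dixon-duSautoy-Mann-Segal-analytic-pro-p-groups}*{Theorem 9.4} directly.
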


\begin{proof}
By \cite{Dixon-duSautoy-Mann-Segal-analytic-pro-p-groups}*{Theorem 9.4}, the canonical Mal'cev map $G\rightarrow (G\widehat{\otimes}\QQ_p)(\QQ_p)$ is analytic. Every continuous homomorphism from $G$ to $U(\QQ_p)$, where $U$ is a unipotent group over $\QQ_p$, kills torsion elements. Therefore, $G$ and $G/\Tor(G)$ have canonically isomorphic continuous Mal'cev $\QQ_p$-completions. Thus, to prove item (2) and item (3), we may assume that $G$ is torsion-free. 

By the correspondence between formal group laws and Lie algebras (\cite{Serre-Lie-algebra-Lie-group}*{p.~112}), the homomorphism $G\rightarrow U_{G,v}$ induces an isomorphism $\Lie(G)\rightarrow \Lie(U_{G,v})$. As a $p$-adic analytic manifold, $U_{G,v}$ is identified with $\QQ_p^r$. Since both the formal group law and the inverse map are polynomial, they define an affine algebraic group $U$ over $\QQ_p$ whose group of $\QQ_p$-points is $U_{G,v}$. Since $\Lie(U)\cong \Lie(G)$ is nilpotent, the algebraic group $U$ is unipotent. Let $U'$ be a unipotent algebraic group over $\QQ_p$, and let $f:G\rightarrow U'(\QQ_p)$ be a  continuous homomorphism. By \cite{Dixon-duSautoy-Mann-Segal-analytic-pro-p-groups}*{Theorem 9.4}, $f$ is analytic and therefore induces a homomorphism $f_*:\Lie(G)\rightarrow \Lie(U')$. Under the equivalence between nilpotent Lie algebras and unipotent algebraic groups (\cite{Betts-thesis}*{Theorem 2.1.4}), $f_*$ determines a unique homomorphism $F:U\rightarrow U'$ of algebraic groups. The Baker-Campbell-Hausdorff formula shows that the composition $G\rightarrow U_{G,v}=U(\QQ_p)\rightarrow U'(\QQ_p)$ agrees with $f$. Hence, $U$ satisfies the universal property of the continuous Mal'cev $\QQ_p$-completion, proving item (2) and item (3).
\end{proof}

\begin{lemma}\label{Lem: Malcev completion preserves exactness of nilpotent groups}
Let $1\rightarrow G_1\rightarrow G_2\rightarrow G_3\rightarrow 1$ be a short exact sequence of topologically finitely generated nilpotent pro-$p$ groups and continuous homomorphisms. Assume that $\QQ_p\subset k$ is a weakly cartesian pair of Hausdorff topological fields. Then the induced sequence $1\rightarrow G_1\widehat{\otimes}k\rightarrow G_2\widehat{\otimes}k\rightarrow G_3\widehat{\otimes}k\rightarrow 1$ is exact.
\end{lemma}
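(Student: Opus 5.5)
First reduce to $k=\QQ_p$. Each $G_i$ is topologically finitely generated, so Proposition \ref{Prop: base change of Malcev completion} gives $G_i\widehat{\otimes}k\cong (G_i\widehat{\otimes}\QQ_p)_k$. This uses that $\QQ_p$ is straight, which I take as available from the appendix material on weakly cartesian pairs. Base change along the field extension $\QQ_p\subset k$ is exact on unipotent groups, since it is exact on Lie algebras. So it suffices to prove exactness over $\QQ_p$.

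Over $\QQ_p$, Proposition \ref{Prop: Malcev's original completion is unipotent completion}(3) says each $G_i\widehat{\otimes}\QQ_p$ is a unipotent group whose Lie algebra is the $p$-adic Lie algebra $\Lie(G_i)$. Under the equivalence between unipotent groups and nilpotent Lie algebras (\cite{Betts-thesis}*{Theorem 2.1.4}), exactness of $1\to G_1\widehat{\otimes}\QQ_p\to G_2\widehat{\otimes}\QQ_p\to G_3\widehat{\otimes}\QQ_p\to 1$ is equivalent to exactness of
\[
0\rightarrow \Lie(G_1)\rightarrow \Lie(G_2)\rightarrow \Lie(G_3)\rightarrow 0
\]
as $\QQ_p$-vector spaces. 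Here the maps are the differentials of the given analytic homomorphisms; they are analytic by \cite{Dixon-duSautoy-Mann-Segal-analytic-pro-p-groups}*{Theorem 9.4}. I also need the Mal'cev completion functor to be compatible with these differentials, which follows from naturality of the isomorphism in item (3).

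Next I prove exactness of the Lie algebra sequence using the standard theory of $p$-adic analytic groups. Since $G_1\to G_2$ is a closed embedding, $G_1$ is a closed subgroup of $G_2$ and its Lie algebra is a subalgebra, so $\Lie(G_1)\to\Lie(G_2)$ is injective. The map $G_2\to G_3$ is a continuous surjection of compact analytic groups, hence a quotient map. Therefore $G_3\cong G_2/G_1$ as analytic groups (\cite{Dixon-duSautoy-Mann-Segal-analytic-pro-p-groups}*{Theorem 9.7}), and $\Lie(G_2/G_1)=\Lie(G_2)/\Lie(G_1)$. Counting dimensions gives $\dim G_2=\dim G_1+\dim G_3$. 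Surjectivity of $\Lie(G_2)\to\Lie(G_3)$ and $\Lie(G_1)\subset\ker$ then yield exactness.

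The main obstacle I expect is justifying the dimension count and the surjectivity on Lie algebras rigorously from the cited results. The fallback is to pass to the torsion-free quotients and work with Mal'cev bases via Proposition \ref{Prop: each torsion free nilpotent pro-$p$ group is analytic}: the rank of $G_i/\Tor(G_i)$ over $\widehat{\ZZ}_p$ in Mal'cev coordinates equals $\dim_{\QQ_p}\Lie(G_i)$. This rank is additive in short exact sequences because the Hirsch length (the sum of the $\widehat{\ZZ}_p$-ranks of the lower central quotients, computed after tensoring with $\QQ_p$) is additive, finite torsion contributing nothing. Surjectivity on Lie algebras follows because a surjective analytic homomorphism has open image near the identity.
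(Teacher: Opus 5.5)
Your proposal is correct and follows the paper's proof. The paper likewise combines exactness of $0\to\Lie(G_1)\to\Lie(G_2)\to\Lie(G_3)\to 0$ with Proposition \ref{Prop: Malcev's original completion is unipotent completion}(3) and the Lie-algebra/unipotent-group equivalence for $k=\QQ_p$, then uses Proposition \ref{Prop: base change of Malcev completion} for general $k$. The only difference is that you justify the Lie-algebra exactness through closed subgroups and quotients of $p$-adic analytic groups, whereas the paper simply asserts it.
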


\begin{proof}
The given exact sequence of groups induces a short exact sequence of Lie algebras $0\rightarrow \Lie(G_1)\rightarrow \Lie(G_2)\rightarrow \Lie(G_3)\rightarrow 0$. Proposition \ref{Prop: Malcev's original completion is unipotent completion} therefore proves the assertion for $k=\QQ_p$.
The result for a general $k$ follows by base change from Proposition \ref{Prop: base change of Malcev completion}.
\end{proof}

\begin{proposition}\label{Prop: lower central series of Lie algebra and malcev completion of lower central series}
Let $G$ be a topologically finitely generated pro-$p$ group, and assume that $\QQ_p\subset k$ is a weakly cartesian pair of Hausdorff topological fields. Then, for every $i$,
the Lie algebra $\Lie(G\widehat{\otimes}k)/\Gamma^i\Lie(G\widehat{\otimes}k)$ is naturally isomorphic to $\Lie((G/\Gamma^iG)\widehat{\otimes}k)$.
\end{proposition}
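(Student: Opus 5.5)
The plan is to show that both sides corepresent the same functor on unipotent groups $U$ over $k$ whose Lie algebra is nilpotent of class $<i$. For a pro-unipotent group $W$ over $k$, homomorphisms from $\Lie(W)/\Gamma^i\Lie(W)$ (pro-nilpotent of class $<i$) to a nilpotent Lie algebra $\mathfrak{u}$ of class $<i$ are the same as homomorphisms $\Lie(W)\to\mathfrak{u}$, since any continuous Lie map kills $\overline{\Gamma^i\Lie(W)}$. Here we use that $\Gamma^i$ of a pro-nilpotent Lie algebra is taken as the closure, i.e.\ the inverse limit of the $\Gamma^i$ of the finite-dimensional quotients. By the equivalence of \cite{Betts-thesis}*{Theorem 2.1.4}, these are in turn the homomorphisms $W\to U$, where $\Lie(U)=\mathfrak{u}$. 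So it suffices to produce natural bijections
\[
\homo(G\widehat{\otimes}k,U)\cong\homo((G/\Gamma^iG)\widehat{\otimes}k,U)
\]
for all unipotent $U$ of nilpotency class $<i$, and then to check that $\Lie((G/\Gamma^iG)\widehat{\otimes}k)$ itself has class $<i$.

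For the first bijection, use Definition \ref{Fact: definition of continuous Malcev completion}. The left side is $\homo_{\cont}(G,U(k))$. Any homomorphism $G\to U(k)$ kills $\Gamma^iG$, because $U(k)$ is nilpotent of class $<i$; under the exponential/BCH description, $\Gamma^i U(k)$ corresponds to $\Gamma^i\Lie(U)=0$. By Theorem \ref{Fact: finitely generated profinite groups are good}, $\Gamma^iG$ is closed, so $G/\Gamma^iG$ is a topological quotient. Hence $\homo_{\cont}(G,U(k))=\homo_{\cont}(G/\Gamma^iG,U(k))$, which is the right side. Both identifications are natural in $U$, so the natural map $\Lie(G\widehat{\otimes}k)\to\Lie((G/\Gamma^iG)\widehat{\otimes}k)$ induced by $G\to G/\Gamma^iG$ factors through $\Lie(G\widehat{\otimes}k)/\Gamma^i$. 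It induces bijections on maps into every nilpotent $\mathfrak{u}$ of class $<i$.

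The remaining step, which I expect to be the main obstacle, is showing that $\Lie((G/\Gamma^iG)\widehat{\otimes}k)$ has nilpotency class $<i$, so that Yoneda applies within the category of pro-nilpotent Lie algebras of class $<i$ (inverse limits of finite-dimensional ones of class $<i$). Set $N=G/\Gamma^iG$. This is a topologically finitely generated nilpotent pro-$p$ group of class $<i$. For $k=\QQ_p$, Proposition \ref{Prop: Malcev's original completion is unipotent completion}(3) gives $\Lie(N\widehat{\otimes}\QQ_p)\cong\Lie(N)$, the $p$-adic Lie algebra of $N$. Its class is $<i$ by the central-series induction used in Corollary \ref{Cor: Lie algebra of nilpotent group is nilpotent}: each step of $N\supset\Gamma^2N\supset\cdots\supset\Gamma^iN=1$ is a central extension, and the corresponding Lie subalgebras satisfy $[\Lie(N),\Lie(\Gamma^jN)]\subset\Lie(\Gamma^{j+1}N)$. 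For general $k$, Proposition \ref{Prop: base change of Malcev completion} gives $N\widehat{\otimes}k\cong(N\widehat{\otimes}\QQ_p)_k$, and base change preserves the nilpotency class. Note that $\QQ_p$ is straight, as it is locally compact.

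Finally, I must check that the quotient $\Lie(G\widehat{\otimes}k)/\Gamma^i$ is an object of the category in question, i.e.\ that it is itself an inverse limit of finite-dimensional class-$<i$ quotients. I would argue this using Proposition \ref{Prop: Malcev completion is the inverse limit of Malcev completion of lower central series} together with the finite generation from Lemma \ref{Lem: malcev completion of a finitely generated group is finitely generated}. Finite generation makes each $\Lie/\Gamma^i$ finite-dimensional, so no closure issues arise. Yoneda then yields the claimed natural isomorphism.
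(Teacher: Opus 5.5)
Your proposal is correct and follows essentially the paper's argument: both quotients of $\Lie(G\widehat{\otimes}k)$ corepresent $U\mapsto\homo_{\cont}(G,U(k))=\homo_{\cont}(G/\Gamma^iG,U(k))$ on unipotent groups of class $<i$, and the class bound on $\Lie((G/\Gamma^iG)\widehat{\otimes}k)$ comes from Proposition~\ref{Prop: Malcev's original completion is unipotent completion}(3). The only difference is bookkeeping: the paper first reduces the whole statement to $k=\QQ_p$ via Proposition~\ref{Prop: base change of Malcev completion}, whereas you run the Yoneda comparison directly over $k$, use base change only for the class bound, and are more careful than the paper about closures and the pro-structure of the quotient.
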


\begin{proof}
By Proposition \ref{Prop: base change of Malcev completion}, it suffices to prove the result when $k=\QQ_p$. The surjection $\Lie(G\widehat{\otimes}\QQ_p)\twoheadrightarrow\Lie(G\widehat{\otimes}\QQ_p)/\Gamma^i\Lie(G\widehat{\otimes}\QQ_p)$ has the universal property that $\Lie(G\widehat{\otimes}\QQ_p)/\Gamma^i\Lie(G\widehat{\otimes}\QQ_p)$ has nilpotency class at most $i-1$ and that, for every Lie algebra $L$ with $\Gamma^iL=0$, every Lie-algebra map $\Lie(G\widehat{\otimes}\QQ_p)\rightarrow L$ uniquely factors through $\Lie(G\widehat{\otimes}\QQ_p)/\Gamma^i\Lie(G\widehat{\otimes}\QQ_p)$. We show that the surjection $\Lie(G\widehat{\otimes}\QQ_p)\twoheadrightarrow\Lie((G/\Gamma^iG)\widehat{\otimes}\QQ_p)$ has the same universal property.

Let $L$ be a Lie algebra with $\Gamma^iL=0$, and let $U$ be a unipotent group whose Lie algebra is $L$. Since $\Gamma^iU(\QQ_p)=0$,  every continuous homomorphism $G\rightarrow U(\QQ_p)$ uniquely factors through $G/\Gamma^iG$. Then  $\homo_{\mathrm{Lie}}(\Lie(G\widehat{\otimes}\QQ_p),L)=\homo(G\widehat{\otimes}\QQ_p,U)=\homo_{\cont}(G,U(\QQ_p))=\homo_{\cont}(G/\Gamma^iG,U(\QQ_p))=\homo((G/\Gamma^iG)\widehat{\otimes}\QQ_p,U)=\homo_{\mathrm{Lie}}(\Lie((G/\Gamma^iG)\widehat{\otimes}\QQ_p),L)$. Thus, every Lie-algebra morphism $\Lie(G\widehat{\otimes}\QQ_p)\rightarrow L$ uniquely factors through $\Lie((G/\Gamma^iG)\widehat{\otimes}\QQ_p)$. Since the nilpotency class of $G/\Gamma^iG$ is at most $i-1$, under the identification $\Lie((G/\Gamma^iG)\widehat{\otimes}\QQ_p)=\Lie(G/\Gamma^iG)$ in Proposition \ref{Prop: Malcev's original completion is unipotent completion}, $\Lie((G/\Gamma^iG)\widehat{\otimes}\QQ_p)$ has nilpotency class at most $i-1$. Therefore, $G\widehat{\otimes}\QQ_p\twoheadrightarrow (G/\Gamma^iG)\widehat{\otimes}\QQ_p$ and $\Lie(G\widehat{\otimes}\QQ_p)\twoheadrightarrow\Lie((G/\Gamma^iG)\widehat{\otimes}\QQ_p)$ have the same universal property. This completes the proof.
\end{proof}

Let $G$ be a topologically finitely generated pro-$p$ group, and let $\Gamma^*G$ be the lower central series.  For $i\geq 1$, set $G_i=G/\Gamma^iG$. For every $i$, choose a Mal'cev basis $v^i$ for $G_i/\Tor(G_i)$.  By Proposition \ref{Prop: Malcev's original completion is unipotent completion} and Lemma \ref{Lem: Malcev completion preserves exactness of nilpotent groups}, the tower $1=G_1\twoheadleftarrow G_2\twoheadleftarrow G_3\twoheadleftarrow ...$ induces a tower $1=U_{G_1/\Tor(G_1),v^1}\twoheadleftarrow U_{G_2/\Tor(G_2),v^2}\twoheadleftarrow U_{G_3/\Tor(G_3),v^3}\twoheadleftarrow ...$ of $p$-adic analytic groups.

\begin{theorem}\label{Thm: equivalence between malcev completion and formal group law}
Let $G$ be a topologically finitely generated pro-$p$ group.
With the notation of the preceding paragraph, the canonical homomorphism $G\rightarrow \varprojlim_i U_{G_i/\Tor(G_i),v^i}$ is the continuous Mal'cev $\QQ_p$-completion of $G$.
\end{theorem}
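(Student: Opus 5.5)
The plan is to reduce the theorem to the nilpotent case already handled in Proposition \ref{Prop: Malcev's original completion is unipotent completion}, and then pass to the limit using Proposition \ref{Prop: Malcev completion is the inverse limit of Malcev completion of lower central series}.

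First, each $G_i=G/\Gamma^iG$ is a topologically finitely generated nilpotent pro-$p$ group. Here we use that $\Gamma^iG$ is closed by Theorem \ref{Fact: finitely generated profinite groups are good}, so $\overline{\Gamma^iG}=\Gamma^iG$ and $G_i$ is again pro-$p$ and topologically finitely generated.

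By Proposition \ref{Prop: Malcev's original completion is unipotent completion}(2), the canonical map $G_i\rightarrow U_{G_i/\Tor(G_i),v^i}$ realizes $U_{G_i/\Tor(G_i),v^i}$ as the $\QQ_p$-points of a unipotent group $U_i$. Moreover, $U_i$ is canonically isomorphic to $G_i\widehat{\otimes}\QQ_p$. The canonicity here is the universal property itself: an isomorphism commuting with the maps from $G_i$ is unique.

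Next I check that the transition maps $U_{i+1}\rightarrow U_i$ in the tower agree with the maps $G_{i+1}\widehat{\otimes}\QQ_p\rightarrow G_i\widehat{\otimes}\QQ_p$ induced by functoriality from $G_{i+1}\twoheadrightarrow G_i$. Both are morphisms of unipotent groups compatible with the maps from $G_{i+1}$. The uniqueness clause in the universal property of $G_{i+1}\widehat{\otimes}\QQ_p$ therefore forces them to be equal, and the tower of analytic groups is identified with the tower $\{G_i\widehat{\otimes}\QQ_p\}$. The homomorphisms $G\rightarrow G_i\rightarrow U_i(\QQ_p)$ are compatible, so they define a continuous homomorphism $G\rightarrow\varprojlim_i U_i(\QQ_p)=(\varprojlim_i U_i)(\QQ_p)$. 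This uses that $\QQ_p$-points commute with limits and that the inverse-limit topology is the one used on points of a pro-unipotent group.

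Finally, Proposition \ref{Prop: Malcev completion is the inverse limit of Malcev completion of lower central series} gives $G\widehat{\otimes}\QQ_p\cong\varprojlim_i (G_i\widehat{\otimes}\QQ_p)\cong \varprojlim_i U_i$. Under this isomorphism the universal map $G\rightarrow (G\widehat{\otimes}\QQ_p)(\QQ_p)$ corresponds to the map constructed above, because both have the same components $G\rightarrow U_i(\QQ_p)$. Concretely, the universal property can also be verified directly. Any continuous $f:G\rightarrow U(\QQ_p)$ with $U$ unipotent kills some $\Gamma^nG$, so it factors through $G_n$ and hence uniquely through $U_n$, and thus through the limit. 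For a general pro-unipotent $U=\varprojlim U_j$, apply this to each $U_j$ and use uniqueness to get compatibility.

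The main obstacle I expect is bookkeeping rather than substance. Specifically, the maps $U_{G_{i+1}/\Tor,v^{i+1}}\rightarrow U_{G_i/\Tor,v^i}$ were defined via possibly unrelated Mal'cev bases, so I must make sure they are algebraic group morphisms and that the limit is formed in pro-unipotent groups. The paper only introduced the tower as a tower of $p$-adic analytic groups. I will handle this by defining the transition maps through the universal property (they are then algebraic and surjective by Lemma \ref{Lem: Malcev completion preserves exactness of nilpotent groups}). I will then note that any analytic homomorphism compatible with the maps from $G_{i+1}$ coincides with it. This coincidence holds by the density-type uniqueness coming from the Lie algebra isomorphism in Proposition \ref{Prop: Malcev's original completion is unipotent completion}(3).
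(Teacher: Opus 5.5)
Your proposal is correct and follows the paper's proof. You apply Proposition \ref{Prop: Malcev's original completion is unipotent completion} to each $G_i=G/\Gamma^iG$, using that $\Gamma^iG$ is closed by Theorem \ref{Fact: finitely generated profinite groups are good}, and then pass to the limit via Proposition \ref{Prop: Malcev completion is the inverse limit of Malcev completion of lower central series}. The paper does this in one line; your additional step, identifying the transition maps of the tower with the functorial maps $G_{i+1}\widehat{\otimes}\QQ_p\rightarrow G_i\widehat{\otimes}\QQ_p$ through uniqueness in the universal property, makes explicit something the paper leaves implicit.
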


\begin{proof}
This follows immediately by applying Proposition \ref{Prop: Malcev's original completion is unipotent completion} to $G_i$ and then using the inverse-limit description of Proposition \ref{Prop: Malcev completion is the inverse limit of Malcev completion of lower central series}.
\end{proof}

\appendix

\section{Topological Fields and Topological Vector Spaces}

In this appendix, we review basic properties of topological fields and finite-dimensional topological vector spaces.

A \textbf{topological field} is a field $k$ equipped with a topology such that addition, multiplication,  additive inversion and multiplicative inversion are continuous (\cite{Wedhorn-Adic-spaces}*{Definition 5.13}). 

\begin{proposition}[\cite{Wedhorn-Adic-spaces}*{Remark 5.15}]
$k$ is Hausdorff if and only if the topology on $k$ is nontrivial.    
\end{proposition}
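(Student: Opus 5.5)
We must prove: a topological field $k$ is Hausdorff if and only if its topology is nontrivial (i.e. not the indiscrete topology). The plan is to use the homogeneity of topological groups together with the fact that the only ideals of a field are $0$ and $k$.

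\textbf{Hausdorff implies nontrivial.} Any field has at least two elements $0\neq 1$. If $k$ is Hausdorff, there are disjoint open sets separating $0$ and $1$. Hence some open set is neither empty nor all of $k$, so the topology is not indiscrete.

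\textbf{Nontrivial implies Hausdorff.} Since $(k,+)$ is a topological group, it is Hausdorff as soon as $\{0\}$ is closed. It therefore suffices to show that the closure $N:=\overline{\{0\}}$ equals $\{0\}$. The argument has three steps.

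\emph{Step 1: $N$ is an additive subgroup.} It is the closure of the subgroup $\{0\}$, and closures of subgroups in a topological group are subgroups.

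\emph{Step 2: $N$ is an ideal.} For each $a\in k$, multiplication $x\mapsto ax$ is continuous and fixes $0$. It therefore maps $\overline{\{0\}}$ into $\overline{\{a\cdot 0\}}=N$, so $aN\subset N$.

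\emph{Step 3: $N=\{0\}$.} A field has only the ideals $0$ and $k$. If $N=k$, then every nonempty open set $V$ meets every neighborhood of $0$. Concretely, $0$ lies in the closure of each point: for $x\in k$, the set $\{0\}$ is dense, so every open $V\ni x$ contains $0$. Translating by $-y$ for arbitrary $y$, every nonempty open set $V$ contains every point. That is, the only nonempty open set is $k$, so the topology is indiscrete, contradicting the hypothesis. Hence $N=\{0\}$, so $\{0\}$ is closed. Translating, every point is closed, and a $T_1$ topological group is Hausdorff.

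The only mildly delicate point is the standard fact that a $T_1$ topological group is Hausdorff. It follows because the diagonal is the preimage of $\{0\}$ under the continuous map $(x,y)\mapsto x-y$. No other obstacle is expected.
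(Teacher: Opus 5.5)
Your proof is correct. The paper gives no argument of its own here and simply cites Wedhorn's Remark 5.15; the standard justification for that remark is the argument you give: $N=\overline{\{0\}}$ is an ideal of the field $k$, so either $N=\{0\}$ (then $k$ is $T_1$, hence Hausdorff via the closed diagonal) or $N=k$ (then the topology is indiscrete).

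Two cosmetic points. In Step 3, what you actually prove and use is that every point lies in $\overline{\{0\}}$, not that $0$ lies in the closure of each point; the two are equivalent in a topological group, but it is worth stating correctly. Also, Step 1 is dispensable: if $0\neq n\in N$, then Step 2 alone gives $a=(an^{-1})n\in N$ for every $a\in k$.
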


A \textbf{topological vector space} over a topological field $k$ is a vector space $V$ equipped with a topology such that vector addition and scalar multiplication are continuous. We write $k^n$ for the vector space $k^n$ equipped with the product topology.

\begin{lemma}\label{Lemma: continuity of kn}
Let $k$ be a topological field. Then
\begin{enumerate}[leftmargin=0.25in]
   \item every polynomial map $f:k^m\rightarrow k^n$ is continuous;
    \item every linear automorphism of $k^n$ is a homeomorphism.
\end{enumerate}
\end{lemma}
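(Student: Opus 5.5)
The statement consists of two elementary facts about topological fields, and I will prove them in order, deducing (2) from (1).

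For (1), I will argue by building up polynomials from the continuous field operations. A map $f:k^m\rightarrow k^n$ into a product space is continuous if and only if each coordinate $f_j:k^m\rightarrow k$ is continuous, so it suffices to treat $n=1$. The constant maps $k^m\rightarrow k$ are continuous, and so are the coordinate projections $k^m\rightarrow k$, by the definition of the product topology. If $g,h:k^m\rightarrow k$ are continuous, then $g+h$ and $gh$ are the composites
\[
k^m\xrightarrow{(g,h)}k\times k\xrightarrow{+,\ \cdot}k,
\]
and each factor is continuous: the first because both of its components are continuous, the second because addition and multiplication on $k$ are continuous by definition of a topological field. Every polynomial in $m$ variables is obtained from constants and coordinate functions by finitely many sums and products. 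Induction on this construction (for instance on the number of operations used) therefore shows that every polynomial function is continuous.

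For (2), let $A:k^n\rightarrow k^n$ be a linear automorphism. It is a polynomial map, since each coordinate is a linear form in the coordinates, so it is continuous by (1). Its inverse $A^{-1}$ is again linear, hence also polynomial, hence continuous by (1) as well. A continuous bijection with continuous inverse is a homeomorphism.

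I do not expect any genuine obstacle. The only point needing care is to note that no Hausdorff hypothesis and no continuity of multiplicative inversion are required here. Continuity of $A^{-1}$ comes from its linearity, not from inverting matrix entries. The consequence used earlier in the paper, that the topology on a finite-dimensional space is independent of the chosen basis, follows at once from (2), because a change of basis is a linear automorphism of $k^n$.
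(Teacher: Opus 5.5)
Your proof is correct and follows the paper's argument: (1) comes from building polynomial coordinates out of projections using the continuous field operations, and (2) comes from applying (1) to the automorphism and to its (linear, hence polynomial) inverse. You are also more careful than the paper in two places: you include the constant maps among the building blocks, and you note that neither the Hausdorff hypothesis nor continuity of inversion is needed.
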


\begin{proof}
Item (1) follows because each coordinate of a polynomial map is obtained from the coordinate projections by finitely many applications of addition and multiplication. For item (2), both the linear automorphism and its inverse are polynomial maps, so both are continuous by item (1).
\end{proof}

\begin{corollary}\label{Cor: subvarieties are closed}
Let $k$ be a Hausdorff topological field. Then every affine subvariety in $k^n$ is closed. In particular, every vector subspace $V$ of $k^n$ is closed and $V$ is linearly homeomorphic to $k^r$, where $r=\dim_{k}(V)$.
\end{corollary}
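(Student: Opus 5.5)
The plan is to prove the two assertions of Corollary \ref{Cor: subvarieties are closed} in turn: closedness of zero loci first, then the homeomorphism for subspaces.

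For closedness, let $X\subset k^n$ be an affine subvariety, i.e. the common zero locus of polynomials $f_1,\dots,f_s\in k[x_1,\dots,x_n]$. By Lemma \ref{Lemma: continuity of kn}(1), the map $f=(f_1,\dots,f_s):k^n\rightarrow k^s$ is continuous. Since $k$ is Hausdorff, the point $\{0\}\subset k$ is closed, so $\{0\}\subset k^s$ is closed in the product topology. Hence $X=f^{-1}(0)$ is closed. A vector subspace $V\subset k^n$ is cut out by finitely many linear forms, so it is a special case and is closed.

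For the homeomorphism, set $r=\dim_k V$ and choose a basis $w_1,\dots,w_r$ of $V$. Extend it to a basis $w_1,\dots,w_n$ of $k^n$. The linear automorphism $A$ of $k^n$ sending the standard basis $e_i$ to $w_i$ is a homeomorphism by Lemma \ref{Lemma: continuity of kn}(2). It carries the coordinate subspace $k^r\times\{0\}$ bijectively onto $V$, so $A$ restricts to a homeomorphism $k^r\times\{0\}\rightarrow V$, where both sides carry their subspace topologies.

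It remains to identify $k^r\times\{0\}$, with the subspace topology from $k^n$, with $k^r$ in its product topology. This follows from the definition of the product topology. The inclusion $x\mapsto (x,0)$ is continuous because each coordinate is either a coordinate projection or constant. The projection $k^n\rightarrow k^r$ is continuous and restricts to the inverse of this inclusion. Together these show that $V$ is linearly homeomorphic to $k^r$.

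No step here is a real obstacle. The only point needing care is that $\{0\}$ is closed, which uses the Hausdorff hypothesis; everything else follows directly from Lemma \ref{Lemma: continuity of kn}.
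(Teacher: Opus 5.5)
Your proof is correct and follows the same route as the paper. Closedness comes from taking the preimage of the closed point $0$ under the continuous polynomial map of Lemma \ref{Lemma: continuity of kn}(1), and $V\cong k^r$ comes from extending a basis of $V$ to one of $k^n$ and applying Lemma \ref{Lemma: continuity of kn}(2). You also make explicit two points the paper leaves implicit: the Hausdorff hypothesis is what makes $\{0\}$ closed, and $k^r\times\{0\}$ with the subspace topology is homeomorphic to $k^r$ with the product topology.
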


\begin{proof}
The first assertion follows from Lemma \ref{Lemma: continuity of kn} because an affine variety is the common zero locus of polynomial maps. The second follows by choosing a basis of $V$, extending it to a basis of $k^n$, and applying Lemma \ref{Lemma: continuity of kn}.
\end{proof}

\begin{lemma}\label{Lem: linar maps from K^n is continuous}
Let $V$ be a finite-dimensional topological vector space. Then every linear map $f:k^r\rightarrow V$ is continuous.
\end{lemma}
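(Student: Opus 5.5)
The plan is to reduce to the standard basis $k^r$ and write $f$ explicitly in coordinates. Let $e_1,\dots,e_r$ be the standard basis of $k^r$ and set $w_i:=f(e_i)\in V$. For $\lambda=(\lambda_1,\dots,\lambda_r)\in k^r$ we have $f(\lambda)=\sum_{i=1}^r \lambda_i w_i$. This exhibits $f$ as the composite
\[
k^r\xrightarrow{\ \Lambda\ } k\times\cdots\times k\times V^{\,r}\xrightarrow{\ \sigma\ } V^{\,r}\xrightarrow{\ \Sigma\ } V,
\]
where $\Lambda(\lambda)=(\lambda_1,\dots,\lambda_r,w_1,\dots,w_r)$, $\sigma$ applies scalar multiplication coordinatewise, $(\lambda_i,w_i)\mapsto \lambda_i w_i$, and $\Sigma$ is iterated vector addition.

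The key steps, in order, are as follows.
\begin{enumerate}
\item The map $\Lambda$ is continuous. Its components are the coordinate projections $k^r\rightarrow k$, which are continuous for the product topology, and constant maps into $V$, which are continuous for any topology.
\item The map $\sigma$ is continuous. This uses the defining assumption that scalar multiplication $k\times V\rightarrow V$ is continuous, applied in each factor, together with the universal property of the product topology.
\item The map $\Sigma$ is continuous. By induction on $r$, using continuity of addition $V\times V\rightarrow V$, the sum $(v_1,\dots,v_r)\mapsto v_1+\cdots+v_r$ is continuous.
\end{enumerate}
Composing the three continuous maps gives continuity of $f$.

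I do not expect a genuine obstacle. The only point requiring care is the bookkeeping with product topologies: we must check that the map $k^r\rightarrow (k\times V)^r$ is continuous, and this follows because each of its components is continuous. Note also that neither finite-dimensionality of $V$ nor the Hausdorff property is used. Linearity alone suffices, so I would record the statement as holding for an arbitrary topological vector space $V$ over an arbitrary topological field $k$.
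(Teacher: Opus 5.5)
Your proof is correct and matches the paper's argument: both write $f(\lambda)=\sum_i \lambda_i f(e_i)$ and conclude from continuity of scalar multiplication and vector addition. Your remark that neither finite-dimensionality of $V$ nor the Hausdorff property is used is also right.
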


\begin{proof}
Let $e_1,...,e_r$ be the standard basis of $k^r$. Set $v_i=f(e_i)$. Then $f(\alpha_1,...,\alpha_r)=\alpha_1v_1+...+\alpha_rv_r$ for every $(\alpha_1,...,\alpha_r)\in k^r$. 
Since scalar multiplication on $V$ is continuous, the map $k\rightarrow V$ given by $\alpha\rightarrow \alpha v_i$ is continuous. Since vector addition is also continuous, $f$ is continuous.
\end{proof}

The following is a direct corollary of Lemma \ref{Lem: linar maps from K^n is continuous}.

\begin{corollary}\label{Corollary: any continuous bijection onto a standard space is a homeomophism}
Let $V$ be a finite-dimensional topological vector space. Any continuous linear bijection $f:V\rightarrow k^r$ is a homeomorphism.
\end{corollary}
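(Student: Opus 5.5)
The statement is Corollary \ref{Corollary: any continuous bijection onto a standard space is a homeomophism}: a continuous linear bijection $f:V\rightarrow k^r$, with $V$ a finite-dimensional topological vector space, is a homeomorphism. The plan is to show that the inverse map is continuous, which is all that remains since $f$ is already a continuous bijection.

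First, I will note that $f^{-1}:k^r\rightarrow V$ is linear. The inverse of a linear bijection between vector spaces is linear, so this needs no further justification.

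Next, I will apply Lemma \ref{Lem: linar maps from K^n is continuous} to $f^{-1}$. That lemma says every linear map from $k^r$ into a finite-dimensional topological vector space is continuous, so $f^{-1}$ is continuous.

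Together with the hypothesis that $f$ is continuous, this makes $f$ a bijection that is continuous in both directions, that is, a homeomorphism.

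There is no real obstacle here. The only points to keep straight are the domain and target of the lemma: it requires the source to be the standard space $k^r$, which is the case for $f^{-1}$, and it needs no assumption that $k$ is Hausdorff. It is also not necessary to check that $\dim V=r$ separately, since the bijection $f$ already forces this.
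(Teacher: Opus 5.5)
Your proof is correct and matches the paper's approach: the paper presents this as a direct corollary of Lemma \ref{Lem: linar maps from K^n is continuous}. Applying that lemma to the linear inverse $f^{-1}:k^r\rightarrow V$ shows it is continuous, so $f$ is a homeomorphism.
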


\begin{definition}\label{Def: Hahn-Banach}
A topological vector space $V$ over $k$ is \textbf{Hahn-Banach} if, for every nonzero $v\in V$, there exists a continuous linear functional $f:V\rightarrow k$ with $f(v)\neq 0$.
\end{definition}

Let $V^{\vee}$ denote the algebraic dual of $V$ and let $\homo_{k,\cont}(V,k)$ denote the vector space of continuous $k$-linear maps $V\rightarrow k$.

\begin{lemma}\label{Lemma: Hahn-Banach is equivalent to all linear functionals are continuous}
A finite-dimensional topological vector space $V$ is Hahn-Banach if and only if the natural inclusion $\homo_{k,\cont}(V,k)\hookrightarrow V^{\vee}$ is an isomorphism.
\end{lemma}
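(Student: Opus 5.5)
The plan is to prove the two implications separately, after fixing a basis so that we may assume $V$ is identified linearly, though not necessarily topologically, with $k^r$, where $r=\dim_k V$. The continuous dual $\homo_{k,\cont}(V,k)$ is a subspace of the algebraic dual $V^{\vee}$, which has dimension $r$. So the inclusion is an isomorphism if and only if $\homo_{k,\cont}(V,k)$ has dimension $r$. Equivalently, every coordinate functional with respect to some (equivalently, any) basis is continuous.

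For the direction ``isomorphism implies Hahn-Banach'' the argument is immediate. Given a nonzero $v\in V$, extend $\{v\}$ to a basis and take the coordinate functional $f$ dual to $v$. Then $f(v)=1\neq 0$, and $f$ lies in $V^{\vee}=\homo_{k,\cont}(V,k)$, so $f$ is continuous. Hence $V$ is Hahn-Banach in the sense of Definition \ref{Def: Hahn-Banach}.

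For the converse, assume $V$ is Hahn-Banach and set $W=\homo_{k,\cont}(V,k)$, a subspace of $V^{\vee}$.
\begin{enumerate}[leftmargin=0.25in]
\item Consider the annihilator $W^{\perp}=\{v\in V: f(v)=0 \text{ for all } f\in W\}$. The Hahn-Banach hypothesis says exactly that $W^{\perp}=0$.
\item Finite-dimensional linear algebra (the perfect pairing $V\times V^{\vee}\to k$) gives $\dim W+\dim W^{\perp}=\dim V$. Hence $\dim W=r$ and $W=V^{\vee}$.
\end{enumerate}
Alternatively, one can choose continuous functionals $f_1,\dots,f_r$ spanning $W$ and argue that the map $V\to k^r$, $v\mapsto (f_1(v),\dots,f_r(v))$, is injective and therefore bijective; every linear functional is then a linear combination of the $f_i$ and so is continuous.

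I do not expect a genuine obstacle. The only point requiring care is to apply the dimension count $\dim W+\dim W^{\perp}=\dim V$ to the purely algebraic pairing, so that no topological input on $k$ is needed. Continuity is used only in the easy observation that sums and scalar multiples of continuous functionals are continuous, which holds because addition and multiplication on $k$ are continuous; this is what makes $W$ a subspace in the first place. As a byproduct, the linear bijection $V\to k^r$ built from the $f_i$ is continuous, and by Lemma \ref{Lem: linar maps from K^n is continuous} its inverse is continuous as well. So a Hahn-Banach space is linearly homeomorphic to $k^r$, consistent with Corollary \ref{Corollary: any continuous bijection onto a standard space is a homeomophism}.
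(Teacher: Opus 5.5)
Your proof is correct and follows essentially the paper's argument. The paper proves the nontrivial direction by inductively building $n$ linearly independent continuous functionals, at each step applying the Hahn--Banach condition to a nonzero vector of $\bigcap_{i<m}\Ker(f_i)$; this is a hands-on version of your count $\dim W+\dim W^{\perp}=\dim V$ with $W^{\perp}=0$. Your remark that $\homo_{k,\cont}(V,k)$ is a subspace of $V^{\vee}$ makes explicit a point the paper uses without comment when it says that finding $n$ independent continuous functionals suffices.
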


\begin{proof}
The ``if'' part is immediate. We prove the converse.

Let $n\geq 1$ be the dimension of $V$. It suffices to construct $n$ linearly independent continuous linear functionals $f_1,...,f_n:V\rightarrow k$. By Definition \ref{Def: Hahn-Banach}, there exists a nonzero continuous linear map $f_1:V\rightarrow k$. Suppose inductively that we have constructed $f_1,...,f_{m-1}$ with $m\leq n$. Choose a nonzero vector $v\in \bigcap_{i=1}^{m-1}\Ker(f_i)$. Thus, there exists a continuous linear map $f_m:V\rightarrow k$ such that $f_m(v)\neq 0$. If $\sum^{m}_{i=1}\alpha_if_i=0$, the evaluation at $v$ gives $\alpha_m=0$. The inductive hypothesis then implies that $\alpha_1=...=\alpha_{m-1}=0$. This completes the proof.
\end{proof}

\begin{definition}[\cite{Warner-topological-rings}*{Theorem 15.5}]\label{Fact: properties of straight topological field}
Let $k$ be a Hausdorff topological field. The field $k$ is \textbf{straight} if every linear functional on a Hausdorff topological $k$-vector space with closed kernel is continuous.
\end{definition}

\begin{example}[\cite{Warner-topological-rings}*{Theorem 13.8, Theorem 14.12}]\label{Example: Valution fields are straight}
Every valuation field, equipped with its valuation topology, is straight. \qed
\end{example}

\begin{theorem}\label{Thm: definition of weakly cartesian}
Let $V$ be a topological vector space over a straight topological field $k$. Then the following conditions on $V$ are equivalent.
\begin{enumerate}[leftmargin=0.25in]
    \item Every finite-dimensional subspace is linearly homeomorphic to $k^r$ for some $r$.
    \item Every finite-dimensional subspace is closed.
    \item Every finite-dimensional subspace is Hahn-Banach.
\end{enumerate}
\end{theorem}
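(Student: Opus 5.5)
The plan is to prove the cycle of implications $(1)\Rightarrow(2)\Rightarrow(3)\Rightarrow(1)$. Note first that condition (1) forces every finite-dimensional subspace to be Hausdorff. Since $k$ is straight, it is Hausdorff, and so is $k^r$.

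For $(1)\Rightarrow(2)$, I argue by contradiction. Let $W\subset V$ be finite-dimensional, and suppose $v\in\overline{W}\setminus W$. The subspace $W'=W+kv$ is finite-dimensional. By (1), $W'$ is linearly homeomorphic to $k^{r+1}$. By Corollary \ref{Cor: subvarieties are closed}, $W$ is then closed in $W'$. Closure in the subspace topology of $W'$ is $\overline{W}\cap W'$, so $v\notin\overline{W}$, a contradiction.

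For $(2)\Rightarrow(3)$, fix a finite-dimensional $W$ and a nonzero $w\in W$. Choose a complement $W=kw\oplus W_0$ and let $f$ be the linear functional with $f(w)=1$ and $f|_{W_0}=0$. Its kernel $W_0$ is a finite-dimensional subspace of $V$, hence closed in $V$ by (2), and therefore closed in $W$. The subspace $W$ is Hausdorff because $\{0\}$ is closed, being a finite-dimensional subspace. Straightness of $k$ (Definition \ref{Fact: properties of straight topological field}) now says $f$ is continuous. Hence $W$ is Hahn-Banach.

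For $(3)\Rightarrow(1)$, let $W$ be finite-dimensional of dimension $r$. By Lemma \ref{Lemma: Hahn-Banach is equivalent to all linear functionals are continuous}, every linear functional on $W$ is continuous. Choose a basis and let $f:W\to k^r$ be the coordinate map. Its components are continuous, so $f$ is a continuous linear bijection. By Corollary \ref{Corollary: any continuous bijection onto a standard space is a homeomophism}, $f$ is a homeomorphism.

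The only delicate point is in $(2)\Rightarrow(3)$. Straightness needs a Hausdorff space and a closed kernel, and we must check that closedness in $V$ passes to $W$; it does, since the kernel is a subset of $W$. The remaining implications are direct applications of the appendix lemmas.
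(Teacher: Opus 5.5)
Your proposal is correct and follows the paper's proof essentially step for step. Both use the same cycle $(1)\Rightarrow(2)\Rightarrow(3)\Rightarrow(1)$: adjoining one vector and applying Corollary \ref{Cor: subvarieties are closed} to get closedness, applying straightness to a functional whose finite-dimensional kernel is closed, and using Lemma \ref{Lemma: Hahn-Banach is equivalent to all linear functionals are continuous} with Corollary \ref{Corollary: any continuous bijection onto a standard space is a homeomophism} for the last step. Your explicit check that $W$ is Hausdorff, because $\{0\}$ is a closed finite-dimensional subspace, supplies a hypothesis of straightness that the paper leaves implicit.
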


\begin{proof}
Let $U\subset V$ be a finite-dimensional subspace, and set $n=\dim_k(U)$.

$(1)\Rightarrow (2)$.  Let $x\in \overline{U}$. Let $U'=U+k\cdot x$. Then $U$ and $U'$ are linearly homeomorphic to $k^n$ and $k^s$, respectively, for a suitable integer $s$. Under these identifications, Corollary \ref{Cor: subvarieties are closed} implies that $U$ is closed in $U'$. Then $x\in U$, and thus, $U$ is closed in $V$.

$(2)\Rightarrow (3)$. Let $0\neq x\in U$. Choose a linear functional $f:U\rightarrow k$ such that $f(x)\neq 0$. Because $\Ker(f)$ is finite-dimensional, it is closed in $V$ by assumption. In particular, $\Ker(f)$ is closed in the subspace $U$. Since $k$ is straight, Definition \ref{Fact: properties of straight topological field} implies that $f$ is continuous.

$(3)\Rightarrow (1)$. By Lemma \ref{Lemma: Hahn-Banach is equivalent to all linear functionals are continuous}, there exist $n$ linearly independent continuous linear functionals $f_1,...,f_n:U\rightarrow k$. Define $F=(f_1, ..., f_n):U\rightarrow k^n$. This map is continuous and a linear isomorphism. By Corollary \ref{Corollary: any continuous bijection onto a standard space is a homeomophism}, $F$ is a homeomorphism.
\end{proof}

\begin{definition}
A topological vector space $V$ over a Hausdorff topological field $k$ is \textbf{weakly cartesian} if it satisfies conditions (1) and (2) of Theorem \ref{Thm: definition of weakly cartesian}.
\end{definition}

\begin{proposition}\label{Prop: direct sum of weakly cartesian is weakly cartesian}
Let $k$ be a straight Hausdorff topological field. Then every direct sum of weakly cartesian topological $k$-vector spaces is weakly cartesian. 
\end{proposition}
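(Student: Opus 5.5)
We need to prove: if $V=\bigoplus_{\alpha} V_\alpha$ is a direct sum of weakly cartesian spaces (with the direct-sum topology, or at least a topology in which each $V_\alpha$ embeds and the coordinate projections $V\to V_\alpha$ are continuous), then $V$ is weakly cartesian. Since $k$ is straight, Theorem \ref{Thm: definition of weakly cartesian} lets us verify any one of its three equivalent conditions. We will verify condition (3): every finite-dimensional subspace of $V$ is Hahn-Banach.

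Let $U\subset V$ be finite-dimensional and $0\neq u\in U$. Choose a finite set of indices $\alpha_1,\dots,\alpha_m$ such that $U\subset V_{\alpha_1}\oplus\cdots\oplus V_{\alpha_m}$. Some projection $\pi_{\alpha_j}(u)$ is nonzero. Write $U_j:=\pi_{\alpha_j}(U)$; this is a finite-dimensional subspace of $V_{\alpha_j}$. Because $V_{\alpha_j}$ is weakly cartesian, $U_j$ is Hahn-Banach, so there is a continuous linear functional $f\colon U_j\to k$ with $f(\pi_{\alpha_j}(u))\neq 0$. Then $f\circ\pi_{\alpha_j}|_U$ is a continuous linear functional on $U$ that does not vanish at $u$. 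Hence $U$ is Hahn-Banach, and condition (3) holds for $V$.

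The only real point to check is the continuity of the projections $\pi_\alpha\colon V\to V_\alpha$. This depends on which topology the paper puts on the direct sum. For the product-induced (box or product) topology, continuity is immediate. For the finest locally convex / direct-sum topology in the category of topological vector spaces, it follows from the universal property: the family consisting of $\mathrm{id}_{V_\alpha}$ and the zero maps on the other summands induces a continuous map $V\to V_\alpha$, and this map is exactly $\pi_\alpha$. I would state the topology explicitly at the start and invoke the appropriate one of these two arguments. This verification is the main (minor) obstacle.

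An alternative, if one prefers to check condition (2) directly, is to note that closedness in the finite sub-sum $V_{\alpha_1}\oplus\cdots\oplus V_{\alpha_m}$ does not by itself give closedness in all of $V$. That route therefore needs extra work, which is why condition (3), transported through the continuous projections, is the cleaner choice.
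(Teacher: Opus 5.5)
Your proof is correct, but it uses a different condition of Theorem \ref{Thm: definition of weakly cartesian} than the paper does.

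The paper checks condition (1). A finite-dimensional $U\subset\bigoplus_i V_i$ lies in a finite sum $W_1\oplus\cdots\oplus W_m$ of finite-dimensional $W_i\subset V_i$. Each $W_i$ is identified with $k^{n_i}$, so the finite sum becomes $k^{n_1+\cdots+n_m}$. Corollary \ref{Cor: subvarieties are closed} then shows that $U$ is linearly homeomorphic to some $k^r$, and closedness in $V$ follows from (1)$\Rightarrow$(2).

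You instead check condition (3). You pull back a separating functional from one summand along the coordinate projection $\pi_{\alpha_j}$, and let the theorem return (1) and (2).

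Your route has the advantage of isolating exactly what the direct-sum topology (which the paper never specifies) must satisfy, namely continuity of the projections. The paper's step ``it suffices to consider a direct sum of two finite-dimensional spaces'' silently assumes that the subspace topology on a finite sub-sum is the product topology. That holds for the product, box, and TVS-coproduct topologies, but it is an assumption of the same kind as yours. The paper's route is more concrete, since it uses only the fact that subspaces of $k^n$ are linearly homeomorphic to some $k^r$.

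Your closing concern about condition (2) is valid as stated, but it is no obstacle on the paper's route. Condition (1) is intrinsic to the subspace, and (1)$\Rightarrow$(2) then gives closedness in all of $V$.

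Finally, neither argument genuinely needs straightness. The implication (1)$\Rightarrow$(3) for the summands is immediate from the coordinate functionals of $U_j\cong k^r$. The implications (3)$\Rightarrow$(1)$\Rightarrow$(2) in the theorem are proved without it.
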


\begin{proof}
Every finite-dimensional subspace of a direct sum $\bigoplus_{i\in I}V_i$ is contained in a finite direct sum of finite-dimensional subspaces of $V_i$'s. It therefore suffices to consider a direct sum of two finite-dimensional weakly cartesian topological vector spaces $V_1$ and $V_2$. By Theorem \ref{Thm: definition of weakly cartesian}, there is a linear homeomorphism $V_i\cong k^{n_i}$ for each $i=1,2$. Consequently, $V_1\oplus V_2\cong k^{n_1+n_2}$ as topological vector spaces. The proposition follows from Corollary \ref{Cor: subvarieties are closed}.
\end{proof}

\begin{definition}\label{Def: weakly cartesian subfield}
Let $k$ be a closed subfield of a Hausdorff topological field $K$. We call $k\subset K$ a \textbf{weakly cartesian pair} if $K$, regarded as a topological vector space over $k$, is weakly cartesian. In this case, $k$ is called a \textbf{weakly cartesian subfield} of $K$.
\end{definition}

\begin{example}
Let $k$ be a finite extension of $\QQ_p$, equipped with its $p$-adic topology, and let $\CC_p$ denote the completion of the algebraic closure of $\QQ_p$. By Example \ref{Example: Valution fields are straight}, the valuation field $k$ is straight. Every finite-dimensional subspace $V$ over $k$ in $\CC_p$ inherits a norm from $\CC_p$ and is complete. Hence, $V$ is closed in $\CC_p$. Theorem \ref{Thm: definition of weakly cartesian} therefore shows that $k\subset \CC_p$ is a weakly cartesian pair. \qed
\end{example}

\begin{proposition}\label{Prop: base change of vector spaces over weakly cartesian pairs}
Let $k\subset K$ be a weakly cartesian pair of Hausdorff topological fields. Assume that $k$ is straight. Then every weakly cartesian topological $K$-vector space $V$ is also weakly cartesian when regarded as a topological $k$-vector space. In particular, $K^n$ is weakly cartesian as a topological $k$-vector space.
\end{proposition}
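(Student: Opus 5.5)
The plan is to verify condition (2) of Theorem \ref{Thm: definition of weakly cartesian} for $V$ viewed over $k$: every finite-dimensional $k$-subspace is closed. Since $k$ is straight, Theorem \ref{Thm: definition of weakly cartesian} then also yields condition (1), so $V$ is weakly cartesian over $k$. The final assertion is the special case $V=K^n$. Here $K^n$ is weakly cartesian over $K$ by Corollary \ref{Cor: subvarieties are closed} and Lemma \ref{Lemma: continuity of kn}: a finite-dimensional $K$-subspace $W\subset K^n$ is a subspace of a copy of $K^n$, hence closed and linearly homeomorphic to $K^r$.

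Let $W\subset V$ be a finite-dimensional $k$-subspace. First, the $K$-span $KW$ is a finite-dimensional $K$-subspace of $V$. Since $V$ is weakly cartesian over $K$, $KW$ is closed in $V$ and linearly homeomorphic over $K$ to $K^m$ for some $m$. A closed subset of a closed subspace is closed in $V$, so it suffices to show that $W$ is closed in $KW\cong K^m$. We may therefore replace $V$ by $K^m$ and $W$ by a finite-dimensional $k$-subspace of $K^m$.

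Next, $K^m$ as a topological $k$-vector space is the direct sum of $m$ copies of $K$ with the product topology. Each copy of $K$ is weakly cartesian over $k$ because $k\subset K$ is a weakly cartesian pair. By Proposition \ref{Prop: direct sum of weakly cartesian is weakly cartesian}, which uses that $k$ is straight, $K^m$ is weakly cartesian over $k$. For a finite sum, the direct-sum topology agrees with the product topology, so this is the topology we need. Hence $W$ is closed in $K^m$, and therefore closed in $V$.

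The main obstacle is the reduction step: checking that the subspace topology on $KW$ inherited from $V$ is the one transported from $K^m$. This is exactly condition (1) for $V$ over $K$, so the point is only bookkeeping, but care is needed that "closed in $KW$" and "closed in $V$" are combined correctly. A second point to confirm is that Proposition \ref{Prop: direct sum of weakly cartesian is weakly cartesian} applies to the finite product topology, which it does because the finite direct sum and the product coincide.
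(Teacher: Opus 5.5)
Your proposal is correct and follows essentially the same route as the paper: pass to the $K$-span of the finite-dimensional $k$-subspace, identify it $K$-linearly and homeomorphically with $K^m$, and apply Proposition \ref{Prop: direct sum of weakly cartesian is weakly cartesian} to see that $K^m$ is weakly cartesian over $k$. The only difference is cosmetic. You prove closedness (condition (2)) and then obtain condition (1) from Theorem \ref{Thm: definition of weakly cartesian}, whereas the paper proves the linear homeomorphism with $k^s$ (condition (1)) and gets (2) from the same theorem.
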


\begin{proof}
Let $U=\text{Span}_k(e_1,...,e_n)$ be a finite-dimensional $k$-subspace of $V$. Let $U_K=\text{Span}_K(e_1,...,e_n)$. This is a finite-dimensional $K$-subspace of $V$. Since $V$ is weakly cartesian over $K$, $U_K$ is $K$-linearly homeomorphic to $K^r$ for some $r$. Since $k\subset K$ is a weakly cartesian pair and $k$ is straight, Proposition \ref{Prop: direct sum of weakly cartesian is weakly cartesian} implies that $U_K$ is weakly cartesian over $k$. Therefore, $U$ is linearly homeomorphic to $k^s$ for some $s$. The proposition follows from Theorem \ref{Thm: definition of weakly cartesian}.
\end{proof}

\section{Preliminaries on Nilpotent Groups over Topological Fields}

In this appendix, we recall the definition of pro-unipotent groups over topological fields and describe the natural topology on their groups of rational points.

\begin{definition}[\cite{Betts-thesis}*{Definition-Lemma 2.1.1}]\label{Definition: definition of unipotent groups}
Let $k$ be a field of characteristic zero. An affine algebraic group $U$ over $k$ is \textbf{unipotent} if it satisfies one of the following equivalent conditions.
\begin{enumerate}[leftmargin=0.25in]
    \item Every finite-dimensional representation of $U$ is unipotent.
    \item Every nonzero representation of $U$ has a non-zero fixed vector.
    \item  For some $m$, $U$ admits a closed embedding into the group $\mathbb{U}_{m,k}$ of upper-unitriangular matrices.
    \item $U$ is an iterated (central) extension of the additive group $\mathbb{G}_a$.
\end{enumerate}
An affine group scheme $U$ over $k$ is \textbf{pro-unipotent} if all of its finite-dimensional quotient group schemes are unipotent.
\end{definition}

\begin{proposition}[\cite{Milne-algebraic-groups}*{Proposition 14.32}]\label{Fact: scheme isomorphism between nilpotent Lie algebras and unipotent groups}
Let $k$ be a field of characteristic zero, and let $U$ be a unipotent algebraic group over $k$. Then the exponential map $\exp:\Lie(U)\rightarrow U(k)$ induces an isomorphism of $k$-schemes, and a bijection on $k$-rational points. If $U$ is commutative, this isomorphism is an isomorphism of algebraic groups.
\end{proposition}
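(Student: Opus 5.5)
The plan is to reduce to the nilpotent Lie algebra $\mathfrak{u}=\Lie(U)$ and prove the claim by induction on $\dim U$. This uses the equivalence between unipotent groups and nilpotent Lie algebras, together with characterization (4) of Definition \ref{Definition: definition of unipotent groups}.

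First fix a closed embedding $U\hookrightarrow \mathbb{U}_{m,k}\subset \GL_m$, which exists by characterization (3). Then $\Lie(U)\subset \mathfrak{n}_m$, the strictly upper triangular matrices, so every $X\in\Lie(U)$ is nilpotent with $X^m=0$. Hence
\[
\exp(X)=\sum_{j<m}X^j/j!
\]
is a polynomial map $\Lie(U)\to \mathbb{U}_{m,k}$. In the other direction, for $g\in\mathbb{U}_{m,k}$ the element $g-1$ is nilpotent, so
\[
\log(g)=\sum_{1\le j<m}(-1)^{j+1}(g-1)^j/j
\]
is also a polynomial map. These two maps are mutually inverse as maps of schemes $\mathfrak{n}_m\leftrightarrow \mathbb{U}_{m,k}$, since the formal identities $\exp\log=\mathrm{id}$ and $\log\exp=\mathrm{id}$ hold in $\QQ[[t]]$ and truncate here. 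Both maps are defined over $k$ because $\mathrm{char}\,k=0$.

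The main step is to show that $\exp$ carries the closed subscheme $\Lie(U)$ (a linear subspace) isomorphically onto $U$, that is, $\exp(\Lie(U))=U$ scheme-theoretically. The approach is as follows.

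\textbf{Step 1.} The Baker--Campbell--Hausdorff series is finite on $\mathfrak{n}_m$. This shows that $\exp(\Lie(U))$ is a closed subgroup scheme $U'$ of $\mathbb{U}_{m,k}$. Closedness holds because $\exp(\Lie(U))=\log^{-1}(\Lie(U))$ is cut out by linear equations composed with the polynomial map $\log$. The group property holds because $\Lie(U)$ is closed under BCH, since BCH involves only brackets.

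\textbf{Step 2.} Show $U'\subset U$. For $X\in\Lie(U)(R)$, consider the one-parameter subgroup $t\mapsto\exp(tX)$. Working with dual numbers and using $\frac{d}{dt}\exp(tX)=X\exp(tX)$, one checks that $\exp(tX)$ lies in $U$. An alternative is to invoke that $U$ is stabilizer-defined via Chevalley's theorem: $U$ is the stabilizer of a line $L$ in some representation $W$ in which $\Lie(U)$ kills $L$ (nilpotent action forces it to kill rather than scale it). Then $\exp(X)$ acts trivially on $L$, which gives $U'\subset U$.

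\textbf{Step 3.} Conclude $U'=U$. Both are smooth connected groups of the same dimension with $U'\subset U$. Here $U$ is connected because it is an iterated extension of $\mathbb{G}_a$, and $U'$ is connected because it is isomorphic to affine space.

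The bijection on $k$-points then follows by evaluating the scheme isomorphism at $k$. For the commutative case, BCH reduces to $X+Y$ because all brackets vanish, so $\exp(X+Y)=\exp X\exp Y$ and $\exp$ is a homomorphism from $\mathbb{G}_a^{\dim U}$. Alternatively, this is immediate from the power series identity, since $X$ and $Y$ commute.

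The main obstacle is Step 2, the containment $\exp(\Lie(U))\subset U$ scheme-theoretically. I would try the Chevalley stabilizer argument first, because it avoids differential equations over arbitrary $k$-algebras.
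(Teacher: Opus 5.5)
The paper gives no proof of this statement; it is quoted from Milne. Your outline is the standard one: embed $U$ in $\mathbb{U}_{m,k}$, use the mutually inverse polynomial maps $\exp$ and $\log$ between $\mathfrak{n}_m$ and $\mathbb{U}_{m,k}$, get the closed subgroup $U'=\exp(\Lie(U))$ from the finite BCH formula, then compare $U'$ with $U$. Step 1, Step 3 and the commutative case are fine.

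The weak spot is Step 2, which you yourself flag. As written it is not yet an argument, and the version you prefer hides a lemma. Knowing that $d\rho(X)$ kills $L$ does not by itself show that $\rho(\exp X)$ fixes $L$. You need the naturality identity $\rho(\exp X)=\exp(d\rho(X))$ for $X\in\mathfrak{n}_m(R)$. Proving it is exactly the characteristic-zero one-parameter-subgroup argument you hoped to avoid: $\phi(t)=\rho(\exp(tX))=\sum_j A_jt^j$ satisfies $\phi(s+t)=\phi(s)\phi(t)$. Comparing coefficients of $st^{j-1}$ gives $jA_j=A_1A_{j-1}$ with $A_0=1$ and $A_1=d\rho(X)$, hence $A_j=A_1^j/j!$.

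The dual-number route also works once its real content is stated. The left-invariant derivation $D_X=(\mathrm{id}\otimes X)\circ\Delta$ preserves the ideal $I$ of $U$, since $\Delta(I)\subset I\otimes\mathcal{O}+\mathcal{O}\otimes I$ and $X(I)=0$. So for $f\in I$, every $t$-derivative of $f(\exp(tX))\in R[t]$ vanishes at $t=0$. In characteristic zero this forces $f(\exp(tX))=0$.

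Better still, you can drop Step 2 entirely, using only what Step 3 already needs:
\begin{itemize}
\item Because $\log(1+\epsilon X)=\epsilon X$, you have $\Lie(U')=\Lie(U)$ inside $\mathfrak{n}_m$.
\item The scheme-theoretic intersection $H=U\cap U'$ then satisfies $\Lie(H)=\Lie(U)\cap\Lie(U')=\Lie(U)$.
\item $H$ is smooth by Cartier's theorem, so $\dim H=\dim\Lie(U)=\dim U=\dim U'$.
\item $U$ and $U'$ are smooth and connected, hence reduced and irreducible. So the closed immersions $H\hookrightarrow U$ and $H\hookrightarrow U'$ are isomorphisms, and $U=U'$.
\end{itemize}
This is the usual characteristic-zero principle that connected algebraic subgroups are determined by their Lie algebras. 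It disposes of what you called the main obstacle.

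Finally, the statement refers to \emph{the} exponential map, so you should record that your $\exp$ does not depend on the chosen embedding. Comparing two embeddings through their block-diagonal product does this.
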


\begin{proposition}[\cite{Betts-thesis}*{Lemma 2.1.3}]\label{Fact: closedness of pro-unipotentness for algebraic group schemes}
Let $k$ be a field of characteristic zero. The full subcategory of pro-unipotent groups over $k$ is closed under subgroups, quotients, extensions, and small limits in the category of affine group schemes over $k$. Moreover, this subcategory is categorically equivalent to the pro-category of unipotent groups over $k$.
\end{proposition}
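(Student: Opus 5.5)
The statement is Proposition \ref{Fact: closedness of pro-unipotentness for algebraic group schemes}: pro-unipotent groups are closed under subgroups, quotients, extensions and small limits, and they form a category equivalent to the pro-category of unipotent groups. I would prove it entirely on the Hopf-algebra side, using the characterization in Definition \ref{Definition: definition of unipotent groups}.

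\textbf{Step 1: every affine group scheme is a limit of its algebraic quotients.} For an affine group scheme $A$ over $k$, the coordinate ring $\OO(A)$ is the filtered union of its finitely generated Hopf subalgebras. Hence $A\cong\varprojlim A_i$ over its algebraic quotients $A_i$, and the transition maps are faithfully flat. This gives a functor from pro-(unipotent groups) to pro-unipotent groups, namely formal limits sent to actual limits.

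It is fully faithful. A homomorphism from $\varprojlim A_i$ to an algebraic group $B$ corresponds to a Hopf map $\OO(B)\to\varinjlim\OO(A_i)$. Since $\OO(B)$ is finitely generated, this map factors through some $\OO(A_i)$, and the factorization is unique up to refinement. It is essentially surjective because a pro-unipotent $U$ is, by definition, the limit of its unipotent algebraic quotients. This proves the equivalence statement.

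\textbf{Step 2: a working criterion.} I would show that $U$ is pro-unipotent if and only if every nonzero finite-dimensional representation of $U$ has a nonzero fixed vector.

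For the forward direction, a finite-dimensional comodule has coefficients lying in a finitely generated Hopf subalgebra. So the representation factors through an algebraic quotient, which is unipotent, and condition (2) of Definition \ref{Definition: definition of unipotent groups} gives the fixed vector.

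For the converse, representations of a quotient $U\to U_i$ are representations of $U$, so each $U_i$ satisfies condition (2) and is unipotent.

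\textbf{Step 3: the four closure properties.}

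\emph{Quotients.} A representation of $U/N$ is a representation of $U$, so it has a fixed vector by Step 2.

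\emph{Subgroups.} Let $H\subset U$ be closed. Every finite-dimensional representation $W$ of $H$ embeds into a restriction $V|_H$ with $V$ a finite-dimensional representation of $U$; this uses the standard embedding of $W$ into $W\otimes\OO(H)$, with $\OO(U)\to\OO(H)$ surjective. Now $V$ has a $U$-stable flag with trivial graded pieces, and intersecting this flag with $W$ produces a nonzero $H$-fixed vector.

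\emph{Extensions.} Let $1\to N\to E\to Q\to1$ with $N$ and $Q$ pro-unipotent, and let $V$ be a nonzero representation of $E$. Then $V^N\neq0$ because $N$ is pro-unipotent. Since $N$ is normal, $V^N$ is $E$-stable and its $E$-action factors through $Q$. So $V^N$ has a nonzero $Q$-fixed vector, which is $E$-fixed.

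\emph{Small limits.} Limits are generated by products and equalizers. Equalizers are closed subgroups, already handled. A product $\prod U_j$ has coordinate ring $\varinjlim$ of finite tensor products of the $\OO(U_j)$. A finite-dimensional representation therefore factors through a finite product, which is an iterated extension and is covered by the extension case.

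\textbf{Main obstacle.} The step I expect to be delicate is the subgroup case: one must check that every $H$-representation genuinely embeds into the restriction of a $U$-representation, and that the intersection argument does not lose the fixed vector. I would first try to cite the standard fact that $\OO(U)\to\OO(H)$ surjective implies every $H$-comodule is a subquotient of a restricted $U$-comodule. A subquotient suffices here, because unipotence passes to subquotients by intersecting and projecting the flag.
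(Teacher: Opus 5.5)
The paper gives no proof of this proposition: it imports it from Betts' thesis (Lemma 2.1.3) and uses it as a black box, so there is no in-paper route to compare against. Your argument is a correct, self-contained Hopf-algebraic proof organized around the fixed-vector criterion of your Step 2, which is the standard way to establish these closure properties. What it buys over the bare citation is that everything reduces to two ingredients: local finiteness of comodules, and condition (2) of Definition \ref{Definition: definition of unipotent groups}.

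Three small points to tidy up.

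First, the subgroup case. The justification you give embeds $W$ into $W\otimes\OO(H)\cong\OO(H)^{\dim W}$ and uses the $H$-equivariant surjection $\OO(U)^{\dim W}\to\OO(H)^{\dim W}$. This only exhibits $W$ as an $H$-submodule of a quotient of $V|_H$, where $V\subset\OO(U)^{\dim W}$ is a finite-dimensional $U$-subcomodule. It does not show that $W$ embeds in a restriction. So drop the embedding claim and run the subquotient version from your last paragraph. That is exactly what the argument proves, and it is all you need.

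Second, Step 1. Your claim that a limit $\varprojlim U_i$ of unipotent groups is pro-unipotent is not automatic, because the transition maps of a formal pro-system need not be surjective. It follows from your Step 3 closure under limits. Alternatively, argue directly: a finitely generated Hopf subalgebra of $\varinjlim\OO(U_i)$ lies in the image of some $\OO(U_i)$, so it is a quotient of a closed subgroup of $U_i$. Either way, the equivalence in Step 1 should be placed after Step 3.

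Third, full faithfulness. When you lift $\OO(B)\to\varinjlim\OO(A_i)$ to some $\OO(A_i)$, the lift is a priori only an algebra map. Compatibility with comultiplication, counit and antipode holds only after a further refinement of the index. This works because $\OO(B)$ is finitely presented and $\varinjlim(\OO(A_i)\otimes\OO(A_i))=\varinjlim\OO(A_i)\otimes\varinjlim\OO(A_i)$.
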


For the remainder of this appendix, we use $k$ to denote a Hausdorff topological field of characteristic zero. We now define the topology on $U(k)$ for every pro-unipotent group $U$ over $k$.

Let $U$ be a unipotent algebraic group over $k$. Choose a linear isomorphism $\Lie(U)\cong k^r$ and equip $\Lie(U)$ with a topology from $k^r$. By Lemma \ref{Lemma: continuity of kn}, the resulting topology is independent of the chosen linear isomorphism $\Lie(U)\cong k^r$. We transport this topology to $U(k)$ through the exponential bijection $\exp:\Lie(U)\rightarrow U(k)$ of Proposition \ref{Fact: scheme isomorphism between nilpotent Lie algebras and unipotent groups}.

\begin{proposition}\label{Prop: topology of a unipotent group}
Let $k$ be a Hausdorff topological field of characteristic zero, and let $U$ be a unipotent group over $k$. Let $\mathbb{U}_{m,k}$ be the algebraic group of unitriangular matrices. Then 
\begin{enumerate}[leftmargin=0.25in]
    \item the exponential map and the logarithm map are mutually inverse homeomorphisms between $\Lie(U)$ and $U(k)$;
    \item every closed embedding of algebraic groups $i:U\hookrightarrow\mathbb{U}_{m,k}$ induces a topological embedding $U(k)\rightarrow \mathbb{U}_{m,k}(k) $ which maps $U(k)$ homeomorphically onto a closed subspace.
\end{enumerate}
\end{proposition}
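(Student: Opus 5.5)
We first make the topology explicit. Fix a linear isomorphism $\Lie(U)\cong k^r$. By Proposition \ref{Fact: scheme isomorphism between nilpotent Lie algebras and unipotent groups}, $\exp:\Lie(U)\rightarrow U$ is an isomorphism of $k$-schemes. Hence $\exp$ and its inverse $\log$ are given, in coordinates on $\Lie(U)$ and on the ambient affine space of $U$, by polynomials.

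For item (1), the topology on $U(k)$ is by definition transported from $\Lie(U)$ along $\exp$. So $\exp$ is a homeomorphism tautologically, and $\log=\exp^{-1}$ is its inverse homeomorphism. The substantive content, which we check along the way, is that this topology does not depend on choices. A different linear identification $\Lie(U)\cong k^r$ changes coordinates by a linear automorphism, which is a homeomorphism by Lemma \ref{Lemma: continuity of kn}.

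For item (2), we use naturality of the exponential. A closed embedding $i:U\hookrightarrow \mathbb{U}_{m,k}$ induces an injective Lie algebra map $di:\Lie(U)\hookrightarrow \Lie(\mathbb{U}_{m,k})=\mathfrak{n}_m$, the strictly upper triangular matrices. It satisfies $i\circ \exp_U=\exp_{\mathbb{U}_{m,k}}\circ\, di$. On $\mathbb{U}_{m,k}$, $\exp$ and $\log$ are the truncated matrix power series $\sum_{j<m} N^j/j!$ and $\sum_{j<m}(-1)^{j+1}(g-1)^j/j$. These are polynomial maps between $\mathfrak{n}_m\cong k^{m(m-1)/2}$ and $\mathbb{U}_{m,k}(k)\subset M_m(k)\cong k^{m^2}$, so both are continuous by Lemma \ref{Lemma: continuity of kn}(1).

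Consequently the topology on $\mathbb{U}_{m,k}(k)$ defined via $\exp$ coincides with the subspace topology from $M_m(k)$. The same holds for $\mathbb{U}_{m,k}(k)$ itself, which is closed in $M_m(k)$ by Corollary \ref{Cor: subvarieties are closed}.

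Next, $di(\Lie(U))$ is a linear subspace of $\mathfrak{n}_m\cong k^N$. By Corollary \ref{Cor: subvarieties are closed} it is closed, and $di$ is a linear homeomorphism onto it. Transporting through the homeomorphism $\exp_{\mathbb{U}_{m,k}}$ shows two things. First, $i(U(k))=\exp(di(\Lie(U)))$ is closed in $\mathbb{U}_{m,k}(k)$. Second, $i:U(k)\rightarrow i(U(k))$ is a homeomorphism, since it equals $\exp\circ di\circ \log_U$, a composite of homeomorphisms onto their images.

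The main point to check carefully is that a closed embedding $i$ really yields $i(U(k))=\exp(di(\Lie(U)))$. Equivalently, $\exp_{\mathbb{U}}$ restricted to $di(\Lie(U))$ surjects onto $U(k)$ and no further. This follows from the naturality square together with bijectivity of $\exp_U$ on $k$-points, both from Proposition \ref{Fact: scheme isomorphism between nilpotent Lie algebras and unipotent groups}. Naturality itself is the standard functoriality of the exponential for unipotent groups in characteristic zero.
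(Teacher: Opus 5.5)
Your proof is correct. For the ``homeomorphism onto its image'' part of (2) you use the same mechanism as the paper: $di$ is a linear homeomorphism onto a closed subspace, transported through the exponentials. The rest is organized differently.

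The paper works in affine coordinates. It gets continuity of $U(k)\to\mathbb{U}_{m,k}(k)$ because $i$ is a polynomial morphism, and closedness of $i(U(k))$ because $i(U)$ is a Zariski-closed subvariety (Corollary \ref{Cor: subvarieties are closed}). Its proof of (1) (``$\exp$ and $\log$ are polynomial, hence continuous'') is really the tacit claim that the exp-transported topology on $U(k)$ agrees with the topology from affine coordinates, and (2) silently relies on that identification.

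You instead note, correctly, that (1) is tautological for the transported topology. You then derive all of (2) from the naturality square $i\circ\exp_U=\exp_{\mathbb{U}_{m,k}}\circ di$:
\begin{itemize}
\item closedness of $i(U(k))$ becomes closedness of the linear subspace $di(\Lie(U))\subset\mathfrak{n}_m$;
\item continuity is built into the factorization $\exp\circ di\circ\log_U$.
\end{itemize}
This stays entirely inside the paper's definition of the topology and never needs the comparison on $U(k)$. Your separate check that on $\mathbb{U}_{m,k}(k)$ the transported topology equals the subspace topology from $M_m(k)$ gives a useful byproduct. Combined with (2), it shows that the topology on any $U(k)$ is the one induced by any closed embedding into unitriangular matrices. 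That is exactly what the paper later invokes, e.g.\ for $U_i(k)\hookrightarrow\GL(V_i)$ and for closedness of polynomially defined subsets of $U_i(k)$.

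The paper's route is shorter but depends on that unstated identification of topologies; yours makes it explicit. One minor slip: the logarithm sum should run over $1\le j<m$, not $j<m$.
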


\begin{proof}
\textbf{Item (1).} Since $U$ is unipotent, both the exponential map and the logarithm map are given by finite polynomial expressions. Both maps are therefore continuous by Lemma \ref{Lemma: continuity of kn}.

\textbf{Item (2).} Since $i$ is a morphism of affine varieties, the induced map $U(k)\rightarrow \mathbb{U}_{m,k}(k) $ is continuous by Lemma \ref{Lemma: continuity of kn}. Moreover, because $i(U)$ is a closed subvariety of $\mathbb{U}_{m,k}$, Corollary \ref{Cor: subvarieties are closed} implies that $i(U(k))$ is closed in $\mathbb{U}_{m,k}(k)$. The induced map $i_*:\Lie(U)\hookrightarrow \Lie(\mathbb{U}_{m,k})$ is continuous by Lemma \ref{Lemma: continuity of kn}  and, by Corollary \ref{Cor: subvarieties are closed}, is a homeomorphism onto its image. Using the exponential homeomorphisms in item (1), we conclude that $U(k)\rightarrow \mathbb{U}_{m,k}(k) $ is a homeomorphism onto its image.
\end{proof}

Let $U$ be a pro-unipotent group over $k$. By Proposition \ref{Fact: closedness of pro-unipotentness for algebraic group schemes}, $U$ can be expressed as an inverse limit $U=\varprojlim_i U_i$ of unipotent groups . We equip $U(k)$ with the inverse limit topology.

\bibliographystyle{amsalpha}
\bibliography{ref}

\Addresses

\end{document}